	\newcommand{\bbR}{\mathbb R}
	\newcommand{\bbC}{\mathbb C}
	\newcommand{\bbH}{\mathbb H}
    \newcommand{\CP}{\mathbb{CP}}
    \newcommand{\bbD}{\mathbb D}
    \newcommand{\calC}{\mathcal{C}}
\DeclareFontFamily{U}{mathx}{\hyphenchar\font45}
\DeclareFontShape{U}{mathx}{m}{n}{<-> mathx10}{}
\DeclareSymbolFont{mathx}{U}{mathx}{m}{n}
\DeclareMathAccent{\widebar}{0}{mathx}{"73}
\theoremstyle{definition}
\theoremstyle{plain}
\newtheorem{theorem}{Theorem}[section]
\newtheorem{proposition}[theorem]{Proposition}
\newtheorem{lemma}[theorem]{Lemma}
\newtheorem{corollary}[theorem]{Corollary}
\newtheorem*{question}{Question}
\newtheorem{example}[theorem]{Example}
\newtheorem{definition}[theorem]{Definition}
\newtheorem{remark}{Remark}
\numberwithin{equation}{section}
\newcommand{\qcg}{\mathcal{QC}^{\dagger}}
\newcommand{\eqcg}{\mathcal{E\!QC}^{\dagger}}
\newcommand{\Aut}{\mathrm{Aut}}
\newcommand{\qag}{\mathcal{QA}^{\dagger}}
\newcommand{\nqag}{\mathcal{N\!QA}^{\dagger}}
\newcommand{\lqag}{\mathcal{QA}_{Lk}^{\dagger}}
\newcounter{foo}
\newtheorem{theo}[foo]{Theorem}
\newtheorem{cor}[foo]{Corollary}
\begin{document}

\title{Graphs of Quasicircles and Quasiconformal Homeomorphisms}
\author{Katherine Williams Booth, Alexander Nolte, and Yvon Verberne}
\date{}

\begin{abstract}
    We give a combinatorial characterization of the group of quasiconformal homeomorphisms of a closed, oriented surface $S$ of genus at least $2$.
    In particular, we prove they are exactly the automorphisms of a graph of essential quasicircles on $S$ that respect a canonical coarse ordering induced by quality constants. We also discuss the coarse geometry of this graph.
\end{abstract}
\maketitle

\stepcounter{section}

Bowden--Hensel--Webb recently proved \cite{bowden2022quasimorphisms} that for closed, oriented surfaces $S$ of genus $g\geq 1$, the identity component $\text{Diff}_0(S)$ of the diffeomorphism group of $S$, while simple \cite{thurston1974foliations}, is not uniformly perfect. This means that there is no uniform $N$ so that any diffeomorphism $h \in \text{Diff}_0(S)$ may be written as a product of $N$ commutators. This came as a surprise, as the analogous statement is false for spheres and for closed manifolds of any dimension other than $2$ or $4$ \cite{burago2008conjugation,tsuboi2008uniform,tsuboi2012uniform}.

The proof of Bowden--Hensel--Webb introduced and centered on an analysis of the \textit{fine curve graph} $\mathcal{C}^{\dagger}(S)$ of $S$, whose vertices are topologically embedded essential curves in $S$ with edges specified by disjointness. The fine curve graph differs from more-studied curve graphs (e.g. \cite{behrstock2006asymptotic,bestvinaBromberg2019asymptotic,bestvinaFujiwara2002bounded,harer1985stability,masur1999geometry,masurMinsky2000geometryII}) in that curves in $\mathcal{C}^\dagger(S)$ are \textit{not} considered up to isotopy, hence all vertices of $\mathcal{C}^\dagger(S)$ have uncountable valence.

This has led to interest in fine curve graphs, e.g. \cite{boothMinahanShapiro2023automorphisms,bowdenHenselMannMiltonWebb2022rotation,guiheneufMiliton2024parabolic,leRouxWolff2024automorphisms,long2023automorphisms}.
The guiding motivation here is that combinatorial tools from geometric group theory may be able to see aspects of homeomorphism groups of surfaces that are otherwise obscure.

As a promising step in this program, Farb--Margalit \cite{farbMargalit2022Unpublished} and Long--Margalit--Pham--Verberne--Yao \cite{long2023automorphisms} showed that the automorphism group of the original fine curve graph is canonically isomorphic to the group $\text{Homeo}(S)$ of homeomorphisms of $S$.
It is much less clear to what extent classical subgroups of $\mathrm{Homeo}(S)$ are accessible from this perspective.

In particular, the automorphism groups of \textit{subgraphs} of $\mathcal{C}^{\dagger}(S)$ consisting of curves with regularity properties are mysterious and seem less well-behaved.
For instance, Le~Roux--Wolff \cite{leRouxWolff2024automorphisms} proved that not all automorphisms of the subgraph of smooth curves are induced by diffeomorphisms, and the first named author proved the automorphism group of the $C^1$-curve graph is substantially larger than the group of $C^1$ diffeomorphisms \cite{booth2024homeomorphisms}.

\vspace{0.3cm}

The point of this paper is to characterize the group $\mathcal{Q}(S)$ of quasiconformal homeomorphisms of a compact oriented surface $S$ in this circle of ideas.
Quasiconformality is a much-studied weakening of conformality with a range of applications, for instance in analyzing deformations of complex structures in Teichm\"uller theory.

We note that we allow quasiconformal homeomorphisms to reverse orientation, i.e.~$\varphi : S \to S$ is quasiconformal if it is quasiconformal with respect to a choice of orientations on the base and target.

 This is the first classically-studied group beyond $\mathrm{Homeo}(S)$ to be characterized in terms of automorphisms of a fine curve graph. The relevant graph $\mathcal{QC}^{\dagger}(S)$ has vertices essential quasicircles in $S$ and edges given by disjointness.
See \S \ref{s-qcbasics} and \S \ref{s-main-defs} for definitions.
The theory of quasicircles and their connections to quasiconformal homeomorphisms goes back to work of Ahlfors \cite{ahlfors1963quasiconformal}, and is by now a well-developed and broad area (e.g. \cite{bishop2025weilPetersson,bonsanteDancigerMaloniSchlenker2021quasicircles,gehring1999characterizations,smirnov2010dimension,tukiaVaisala1980quasisymmetric}).

We do not show that every automorphism of $\mathcal{QC}^{\dagger}(S)$ is induced by a quasiconformal homeomorphism.
At least in the case of surfaces with punctures, there are counter-examples to this expectation, see Example \ref{ex-counter-ex}.

However, $\qcg(S)$ has extra structure that comes from quality constants $K \geq 1$ that measure the irregularity of quasicircles with respect to an auxiliary hyperbolic metric.
We say that $\Phi \in \mathrm{Aut}(\qcg(S))$ is \textit{coarsely order preserving} if for all $K\geq 1$ there is some $K' < \infty$ so that the image of any $K$-quasicircle under $\Phi$ is at most a $K’$-quasicircle. See \S \ref{s-qcbasics} and \S \ref{s-main-defs} for precise definitions and more discussion.
We indicate that choosing definitions here is a point of some delicacy because not all characterizations of quasicircles have uniformly comparable involved quality constants.

Our main theorem is the following:

\begin{theo}\label{thm-first}
    Let $S$ be a closed orientable surface of genus $g \geq 2$.
    Any automorphism $\Phi$ of $\mathcal{QC}^\dagger(S)$ is induced by a homeomorphism $\varphi$ of $S$.
    The homeomorphism $\varphi$ is quasiconformal if and only if $\Phi$ is coarsely order preserving.
\end{theo}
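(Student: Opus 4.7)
The proof splits along the two assertions of the theorem. For the first, the strategy is to extend any $\Phi \in \mathrm{Aut}(\qcg(S))$ to an automorphism of the full fine curve graph $\calC^\dagger(S)$, and then invoke the Farb--Margalit and Long--Margalit--Pham--Verberne--Yao result identifying $\mathrm{Aut}(\calC^\dagger(S))$ with $\mathrm{Homeo}(S)$. The homeomorphism $\varphi$ so obtained will automatically induce $\Phi$ on $\qcg(S) \subset \calC^\dagger(S)$.

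To realize the extension, I would encode each topological curve $\gamma \in \calC^\dagger(S)$ combinatorially by a cofinal family of annular neighborhoods bounded by disjoint pairs of quasicircles. Such families exist because quasicircles are plentiful: near any topological curve, uniformization and standard smoothing produce disjoint quasicircles sandwiching $\gamma$ on both sides and squeezing arbitrarily close to it. The image $\Phi(\gamma)$ is then defined as the curve recovered from the images, under $\Phi$, of these approximating annuli. The main items to verify are (i) that the images collapse onto a single topological curve rather than a wilder continuum, (ii) that $\Phi(\gamma)$ is independent of the chosen approximating family, and (iii) that disjointness between topological curves is preserved, so that the extension is in fact an automorphism of $\calC^\dagger(S)$. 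The condition ``bounds an annulus'' itself must be characterized combinatorially in $\qcg(S)$, so that it is manifestly preserved by $\Phi$.

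For the second assertion, the forward direction is classical: a $K$-quasiconformal homeomorphism composed with a $K_0$-quasicircle yields a quasicircle with constant controlled by $K$ and $K_0$, so any quasiconformally-induced $\Phi$ is coarsely order preserving. For the reverse, given uniform control $K \mapsto K'$ on image qualities, I would apply the hypothesis in particular to round circles in uniformization charts---which are uniformly $1$-quasicircles---to conclude that $\varphi$ sends small round circles to quasicircles of uniformly bounded constant. A classical Ahlfors-type characterization of quasiconformality in terms of controlled distortion of round circles, transferred locally to $S$ via charts, then yields that $\varphi$ is quasiconformal.

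I expect step (i) above to be the principal obstacle. Quasicircles can be genuinely fractal, with Hausdorff dimension strictly greater than $1$, and one must distinguish combinatorially from inside $\qcg(S)$ those nested annular configurations that shrink to a single topological curve from those that degenerate onto pathological continua. This detection---together with the related task of producing a rich enough cofinal family of approximating quasicircle-annuli around \emph{every} topological curve, including those of low regularity---will likely demand the bulk of the technical work. Once in hand, the remaining extension properties and the quasiconformal characterization should follow from standard manipulations.
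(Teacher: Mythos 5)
Your proposed route for the first assertion---extending $\Phi$ to an automorphism of the full fine curve graph $\mathcal{C}^\dagger(S)$ and then citing the Farb--Margalit / Long--Margalit--Pham--Verberne--Yao identification of $\mathrm{Aut}(\mathcal{C}^\dagger(S))$ with $\mathrm{Homeo}(S)$---differs from the paper's, and the obstacle you flag is exactly the one that prevents this route from going through cheaply. To extend $\Phi$ to \emph{all} topological curves you would need a characterization, expressible purely inside $\qcg(S)$, of which nested chains of quasicircle-bounded annuli collapse onto a single Jordan curve, together with a proof that the images under $\Phi$ collapse onto a Jordan curve rather than onto a continuum with interior or a nonlocally connected compactum; no such combinatorial characterization is supplied, and it is far from clear one exists, since the limit set of a nested sequence of annuli can be essentially arbitrary. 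The paper avoids this entirely by extending $\Phi$ only to the graph $\mathcal{EQC}^\dagger(S)$ of all (including inessential) quasicircles---a much smaller extension, achieved via bigon pairs, linked sharing pairs, and the connectivity of the linked quasi-arc graph---and then building the homeomorphism directly from nested sequences of inessential quasicircles that converge to \emph{points}. Convergence to a point has a clean combinatorial criterion (every curve hitting the tail of the sequence must meet every other such curve), which sidesteps the curve-detection problem you correctly identify as the crux. So the gap in your step (i) is real, and the paper's solution is to never pose that problem.

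For the second assertion your outline is close in spirit but misses a necessary intermediate step. Small round circles in charts are \emph{inessential}, so the coarse order preservation hypothesis---which is formulated only for the filtration $\mathcal{F}_K$ on \emph{essential} quasicircles---gives no direct control over their images. The paper's Lemma \ref{lemma-stick-arc-in-QC} handles this: any ordered quadruple on a sufficiently small circle lies on a $3/4$-arc that can be completed to an \emph{essential} $K_0$-quasicircle drawn from a finite set $F$ of isotopy classes, with $K_0$ uniform. Weak coarse order preservation (applied to the finitely many classes in $F$) then bounds the quality constants of the images, and the M\"obius-invariant $4$-point bounded turning condition transfers that bound to each quadruple on the image circle in the universal cover, after which Aseev's theorem finishes. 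Without this reduction to essential quasicircles your appeal to the hypothesis has no purchase on round circles.
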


In particular, the canonical map $\mathcal{Q}(S) \to {\rm{Aut}}\mathcal{QC}^{\dagger}(S)$ is an isomorphism onto the subgroup ${\rm{CAut}}\mathcal{QC}^{\dagger}(S)$ of coarsely order preserving automorphisms.

Theorem \ref{thm-first} gives a combinatorial characterization of the group $\mathcal{Q}(S)$.
From the perspective of fine curve graphs, the main novelties of Theorem \ref{thm-first} are the appearance of a new subgroup of interest and the notion of coarse ordering; from the analytic perspective, the main novelty is the \textit{combinatorial} flavor of the characterization of quasiconformal maps.
Much of the point of Theorem \ref{thm-first} is to make a connection between the two areas, and the result is proved by combining tools from the geometric-group-theoretic study of fine curve graphs with analytic tools on quasicircles and quasiconformal mappings.

This result opens the classical group $\mathcal{Q}(S)$ to study with techniques from geometric group theory.
As a step towards the usefulness of Theorem \ref{thm-first} in this direction, we prove:

\begin{theo}\label{thm-hyperbolic}
    $\qcg(S)$ is Gromov hyperbolic for closed oriented surfaces $S$ of genus $g \geq 2$.
\end{theo}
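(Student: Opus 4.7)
The plan is to deduce Gromov hyperbolicity of $\qcg(S)$ from that of the ambient fine curve graph $\calC^\dagger(S)$, established by Bowden--Hensel--Webb \cite{bowden2022quasimorphisms}. Concretely, I would show that the natural inclusion $\iota\colon \qcg(S)\hookrightarrow \calC^\dagger(S)$ is a quasi-isometry. Since hyperbolicity transfers through quasi-isometric embeddings of connected graphs---images of geodesic triangles become quasi-geodesic triangles, which are thin by the Morse lemma---this transfer is enough.

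The map $\iota$ is $1$-Lipschitz, since any path in $\qcg(S)$ is already a path in $\calC^\dagger(S)$. The crux is the reverse inequality: $d_{\qcg}(\alpha,\beta)\leq d_{\calC^\dagger}(\alpha,\beta)$ for quasicircles $\alpha,\beta$. Given a geodesic $\alpha=\alpha_0,\alpha_1,\ldots,\alpha_n=\beta$ in $\calC^\dagger(S)$, I would replace each interior vertex $\alpha_i$ ($1\leq i\leq n-1$) by a smooth essential simple closed curve $\alpha_i'$ chosen from a thin tubular neighborhood $N_i$ of $\alpha_i$. Because consecutive $\alpha_i,\alpha_{i+1}$ are disjoint compact subsets of $S$, the $N_i$ can be chosen so that $N_i\cap N_{i+1}=\emptyset$ for all $i$, with $N_1\cap\alpha_0=\emptyset$ and $N_{n-1}\cap\alpha_n=\emptyset$. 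This forces consecutive curves of the modified sequence to be disjoint, and since smooth essential closed curves are quasicircles, the modified sequence is a path of length $n$ in $\qcg(S)$ from $\alpha$ to $\beta$. Thus $\iota$ is in fact an isometric embedding. The same smoothing argument, applied to a push-off of a single curve, shows the image of $\iota$ is coarsely dense in $\calC^\dagger(S)$, so $\iota$ is a quasi-isometry.

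The main obstacle is the smoothing-and-disjointness step. The topological core---that topologically embedded essential curves on $S$ admit bicollar tubular neighborhoods of arbitrarily small width within which they can be isotoped to smooth essential representatives---is standard. What must be checked carefully against the paper's precise definitions (see \S\ref{s-qcbasics}, \S\ref{s-main-defs}) is that smooth essential closed curves indeed qualify as quasicircles; this is expected from the classical equivalence of analytic, geometric, and quasisymmetric characterizations of quasicircles, as their lifts to $\mathbb{H}^2$ under an auxiliary hyperbolic metric are bi-Lipschitz perturbations of geodesics. A useful byproduct is that the connectivity of $\qcg(S)$ need not be assumed in advance: it drops out of the same perturbation, given the known connectivity of $\calC^\dagger(S)$.
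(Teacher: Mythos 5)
Your proposal is correct, and it takes a genuinely different route from the paper. The paper adapts the Bowden--Hensel--Webb strategy almost verbatim: it passes to punctured surfaces $S-P$, invokes Rasmussen's hyperbolicity of the surviving curve graph $\mathcal{C}^s(S-P)$, and establishes a chain of lemmas (minimal-position perturbation, geodesics avoiding finite sets, and the distance identity $d_{\mathcal{C}^s(S-P)}([\alpha],[\beta]) = d_{\qcg(S)}(\alpha,\beta)$) to transport the four-point condition. You instead treat BHW's hyperbolicity of $\mathcal{C}^\dagger(S)$ itself as a black box and show the inclusion $\iota\colon\qcg(S)\hookrightarrow\mathcal{C}^\dagger(S)$ is an isometric embedding with $1$-dense image, whence hyperbolicity transfers by the Morse lemma. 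Your key perturbation argument is sound: in dimension two every embedded circle is tame and admits arbitrarily thin annular neighborhoods, a smooth essential representative lives in any such neighborhood, and smooth essential simple closed curves are quasicircles by the paper's Definition~\ref{def-qcirc-on-genus} together with isotopy extension and the differential characterization of quasiconformality on a compact surface. The disjointness bookkeeping (shrinking the $N_i$ so that $N_i\cap N_{i\pm 1}=\emptyset$) is exactly what is needed, and the observation that connectivity of $\qcg(S)$ falls out of the same perturbation is a genuine simplification over the paper's separate Proposition~\ref{thm:pathconnected}. Your route is shorter and yields the slightly stronger fact that $\iota$ is an isometry onto a $1$-dense subgraph; the paper's route has the side benefit of developing the surviving-curve-graph comparison (Lemmas~\ref{Lem:IsotopeToMinimalPosition}--\ref{Lem:DistanceinQCSurviving}), which is arguably a more flexible toolkit, but for proving Theorem~\ref{thm-hyperbolic} alone your reduction is the more economical argument.
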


Knowledge of the hyperbolicity of $\qcg(S)$ is desirable, because the tools of geometric group theory are particularly well-adapted to the study of groups through actions on Gromov-hyperbolic spaces. Bowden--Hensel--Webb \cite{bowden2022quasimorphisms} leverage the hyperbolicity of $\mathcal{C}^{\dagger}(S)$ to show that the space $\widetilde{QH}(\text{Diff}_0(S))$ of unbounded quasimorphisms on $\text{Diff}_0(S)$ is infinite dimensional, proving that $\text{Diff}_0(S)$ is not uniformly perfect.

As a consequence of the differential characterization of quasiconformality and compactness of $S$, any $C^1$ diffeomorphism is a $K$-quasiconformal map for some $K$. So using Theorem~\ref{thm-hyperbolic}, we can follow the arguments in \cite[Sections 4 \& 5]{bowden2022quasimorphisms} to obtain:

\begin{cor}
    For a closed oriented surface $S$ with genus $g\geq 2$, the space $\widetilde{QH}(\mathcal{Q}(S))$ of unbounded quasimorphisms on $\mathcal{Q}(S)$ is infinite dimensional.
\end{cor}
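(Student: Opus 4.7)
The plan is to apply the Bestvina-Fujiwara counting quasimorphism machinery to the action of $\mathcal{Q}(S)$ on the Gromov hyperbolic space $\qcg(S)$, following the template of \cite[Sections 4-5]{bowden2022quasimorphisms}. Theorem \ref{thm-first} provides the action: a $K'$-quasiconformal map sends $K$-quasicircles to $KK'$-quasicircles and is hence coarsely order preserving, so $\mathcal{Q}(S)$ embeds faithfully in $\mathrm{CAut}(\qcg(S))$; Theorem \ref{thm-hyperbolic} provides hyperbolicity of the target.

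Next I would produce two elements of $\mathcal{Q}(S)$ satisfying the independent loxodromic hypothesis used by Bestvina-Fujiwara. Since any $C^1$ diffeomorphism of the compact surface $S$ has bounded dilatation, $\mathrm{Diff}^\infty(S) \subset \mathcal{Q}(S)$. Bowden-Hensel-Webb construct smooth diffeomorphisms that act as independent loxodromics on $\calC^\dagger(S)$, with explicit smooth quasi-axes. Smooth essential curves are quasicircles, so these axes sit inside the induced subgraph $\qcg(S) \subset \calC^\dagger(S)$, and one has $d_{\qcg}(u,v) \geq d_{\calC^\dagger}(u,v)$ between any two quasicircles $u,v$. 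Thus both positive translation length and the divergence of the two axes transfer from $\calC^\dagger(S)$ to $\qcg(S)$, providing the required independent pair of loxodromics for the action of $\mathcal{Q}(S)$ on $\qcg(S)$.

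With the hyperbolic action and the independent loxodromics in hand, the Bestvina-Fujiwara construction \cite{bestvinaFujiwara2002bounded} produces an infinite-dimensional space of homogeneous quasimorphisms on $\mathcal{Q}(S)$, as claimed. The step to be careful about is the transfer in the previous paragraph: one needs not merely positive translation length but also the axis-divergence (or WPD-type) condition that Bestvina-Fujiwara actually uses, and this must be verified in the smaller graph $\qcg(S)$. Because the axes themselves sit in $\qcg(S)$ and the subgraph metric dominates, the verification reduces to comparing BHW's explicit axis distances in the two graphs, which should be routine.
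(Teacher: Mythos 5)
Your proposal is correct and follows essentially the same approach as the paper, which simply observes that $C^1$ diffeomorphisms are quasiconformal and defers to the arguments of Bowden--Hensel--Webb \cite[Sections 4 \& 5]{bowden2022quasimorphisms} together with Theorem~\ref{thm-hyperbolic}. Your additional detail on transferring the independent loxodromics---noting that their smooth quasi-axes consist of quasicircles and hence live in the induced subgraph $\qcg(S)$, with the subgraph metric giving the needed lower bound on distances---makes explicit what the paper leaves implicit in ``follow the arguments.''
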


We conclude by remarking that we do not know whether or not the assumption of coarse order preservation is necessary in the closed case.
This seems to require understanding subtle aspects of the structure of general quasiconformal maps, and so we ask:

\begin{question}\label{q-is-filtration-needed}
    Is every homeomorphism of a closed surface that sends quasicircles to quasicircles quasiconformal? 
\end{question}

\subsection{Outline}
We begin by recalling fundamentals on quasicircles and quasiconformal homeomorphisms in \S \ref{s-qcbasics}.
We then introduce $\mathcal{QC}^\dagger(S)$ and its basic structure in \S \ref{s-main-defs}.

Proving that automorphisms of various fine curve graphs are induced by homeomorphisms is in general an involved process in which the particulars depend heavily on the graph under consideration.
There is not a known general result that includes $\mathcal{QC}^\dagger(S)$, but we are able to show that any automorphism of $\mathcal{QC}^\dagger(S)$ is induced by a homeomorphism with a combination of methods from the continuous and smooth cases \cite{booth2024automorphisms,long2023automorphisms}.
We explain how to adapt these methods to our setting in \S \ref{Section:TopologyToGraphs}-\ref{ss-get-homeo}.
We then prove that a homeomorphism that induces an element of $\mathrm{CAut}(\qcg(S))$ is quasiconformal in \S \ref{ss-quasiconformality}.
Finally, we prove Theorem~\ref{thm-hyperbolic} in \S 6. 

\subsection*{Acknowledgements}
This project began at the Tech Topology Conference in December 2022. The authors would like to thank Dan Margalit for helpful comments on an earlier draft.
K.W.B. was supported by the National Science Foundation under DMS Grants No. 2417920, and 2503473.
A.N. was supported by the National Science Foundation under DMS Grants No. 1842494, 2502952, and 2005551.
Y.V. was partially supported by an NSERC-PDF Fellowship and by an NSERC Discovery grant RGPIN 05587.

\section{Quasiconformal Homeomorphisms and Quasicircles}\label{s-qcbasics} 

Let us set some definitions for quasicircles in surfaces. 
There are many characterizations of quasicircles (e.g. \cite{gehring1999characterizations}), some of which give non-comparable quality constants.
What is important for our proofs here is that our characterization is based on M\"obius, not metric, geometry.
Standard references for the basic material here are \cite{lehto1973quasiconformal} and \cite{lehto2012univalent}.

Recall also that a homeomorphism $\varphi$ of a Riemann surface $\Sigma$ is \textit{$K$-quasiconformal} if it distorts the moduli of ring domains in $\Sigma$ by a factor of at most $K$.
Quasiconformality admits a reformulation (e.g. \cite{bers1960quasiconformal}) as a requirement of $\varphi$ to have $L^2$ generalized partial derivatives that almost everywhere satisfy the differential relation corresponding to distortion of eccentricity of infinitesimal ellipses by at most a factor of $K$.
In particular, $K$-quasiconformality is a local condition and any $C^1$ diffeomorphism of a closed surface is $K$-quasiconformal for some $K$.

Let $\mathcal{Q}(S)$ denote the group of quasiconformal homeomorphisms of a Riemann surface structure $\Sigma$ on a closed topological surface $S$.
As the notation suggests, $\mathcal{Q}(S)$ does not depend on the choice of Riemann surface structure due to the closedness of $S$. 

\subsection{Quasicircles in the Riemann sphere} In contrast to $K$-quasiconformality, $K$-quasicircularity is \textit{not} a local condition. 
This leads us to take some care with its treatment on surfaces with genus.
On $\CP^1$, we recall:

\begin{definition}
    A $K$-quasicircle in $\CP^1$ is the image of a circle under a $K$-quasiconformal homeomorphism of $\CP^1$.
\end{definition}

There are many equivalent reformulations of the condition of quasicircularity on $\CP^1$ and notions of quality constants defining $K$-quasicircularity.
We recall a reformulation that will be useful to us.
A curve $\gamma \in \bbC$ satisfies the \textit{$4$-point $c$-bounded turning condition} for $c > 0$ if for all distinct and circularly ordered $p_1, p_2, p_3, p_4 \in \gamma$, the cross ratio $(p_4, p_1; p_2,p_3)$ is no less than $1/c$, i.e. \begin{align*}
    \frac{| p_1 - p_2| | p_4 - p_3 |}{|p_1 - p_3| |p_4 - p_2|} \leq c.
\end{align*}
There is a standard modification to extend to the Riemann sphere $\CP^1$.
Because the cross-ratio is M\"obius-invariant, the $4$-point $c$-bounded turning condition is invariant under M\"obius transformations of $\CP^1$.

The $4$-point bounded turning condition characterizes quasicircularity, with uniform dependence in the involved constants:

\begin{theorem}[Ahlfors {\cite[Theorem 1]{ahlfors1963quasiconformal}}]
    For every $K > 0$ there is a $c(K) < \infty$ so that every $K$-quasicircle in $\CP^1$ satisfies the $4$-point $c$-bounded turning condition.
    Conversely, for every $c > 0$ there is a $K(c) < \infty$ so that every Jordan curve in $\CP^1$ that satisfies the $4$-point $c$-bounded turning condition is a $K(c)$-quasicircle.
\end{theorem}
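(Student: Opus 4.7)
The plan is to prove the two implications using the quasi-M\"obius property of quasiconformal maps for the forward direction, and the Beurling--Ahlfors extension theorem for the reverse direction.

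For the forward direction, suppose $\gamma = f(S^1)$ for a $K$-quasiconformal $f : \CP^1 \to \CP^1$. The key analytic fact is that $K$-quasiconformal self-maps of $\CP^1$ are $\eta_K$-quasi-M\"obius: there is a homeomorphism $\eta_K : [0, \infty) \to [0, \infty)$, depending only on $K$, such that for any four distinct points $z_1, z_2, z_3, z_4 \in \CP^1$,
\[ |(f(z_1), f(z_2); f(z_3), f(z_4))| \leq \eta_K(|(z_1, z_2; z_3, z_4)|). \]
This is a consequence of the modulus-distortion inequality for quasiconformal maps, applied to the canonical topological quadrilateral with the four points as vertices. Given four circularly ordered points $p_1, p_2, p_3, p_4$ on $\gamma$, their preimages $q_i = f^{-1}(p_i)$ are circularly ordered on $S^1$, so the cross-ratio $(q_4, q_1; q_2, q_3)$ is at most $1$. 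The quasi-M\"obius inequality above then bounds $(p_4, p_1; p_2, p_3)$ by $\eta_K(1)$, giving the 4-point condition with $c(K) = \eta_K(1)$.

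For the reverse direction, suppose $\gamma$ is a Jordan curve in $\CP^1$ satisfying the 4-point $c$-bounded turning condition. I would first normalize by M\"obius transformations of both source and target so that three distinguished points of $\gamma$ correspond to three distinguished points of $S^1$, placing the configuration in a bounded region of $\bbC$. Under this normalization, the 4-point cross-ratio hypothesis specializes to a quasisymmetry bound on the induced boundary homeomorphism $h : S^1 \to \gamma$: ratios of the form $|h(x) - h(y)| / |h(x) - h(z)|$ are controlled by $|x - y| / |x - z|$ with gauge function depending only on $c$. The Beurling--Ahlfors extension theorem then produces a $K(c)$-quasiconformal extension of $h$ on each of the two disks bounded by $S^1$, and these glue across $S^1$ to a $K(c)$-quasiconformal homeomorphism of $\CP^1$ carrying $S^1$ onto $\gamma$, exhibiting $\gamma$ as a $K(c)$-quasicircle.

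The hard part will be the translation step in the reverse direction. Converting a cross-ratio hypothesis into a genuine quasisymmetry bound requires careful handling of degenerate configurations (points nearly coalescing, or one point escaping toward infinity after M\"obius normalization), and one must check that the resulting modulus of quasisymmetry is quantitative in $c$ alone. A secondary subtlety is checking that the two Beurling--Ahlfors extensions glue continuously across $S^1$ to a global quasiconformal homeomorphism of the sphere with uniformly bounded dilatation, which in practice one handles by conjugating the construction across $S^1$ via the M\"obius reflection.
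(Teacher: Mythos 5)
This theorem is quoted in the paper from Ahlfors \cite{ahlfors1963quasiconformal} without proof, so there is no internal argument to compare against; I will instead assess your proposal on its own terms.

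Your forward direction is sound. That $K$-quasiconformal self-maps of $\CP^1$ distort absolute cross-ratios by a gauge function $\eta_K$ depending only on $K$ (the quasi-M\"obius property, in the sense of Tukia--V\"ais\"al\"a and V\"ais\"al\"a) is a standard consequence of the modulus-distortion inequality, and combined with the observation that the relevant cross-ratio of circularly ordered points on a round circle is at most $1$, this gives $c(K) = \eta_K(1)$. You would want to match cross-ratio conventions carefully, but the idea is correct.

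The reverse direction has a genuine gap at its central step. You speak of ``the induced boundary homeomorphism $h : S^1 \to \gamma$'' as though the $4$-point condition hands you a distinguished parameterization of $\gamma$; it does not. The bounded turning condition is a metric condition on the \emph{set} $\gamma$, and it says nothing about any particular homeomorphism $S^1 \to \gamma$. Indeed, most parameterizations of a quasicircle are not quasisymmetric, so the claim that ``ratios of the form $|h(x)-h(y)|/|h(x)-h(z)|$ are controlled by $|x-y|/|x-z|$'' cannot be a direct specialization of the hypothesis for an arbitrary $h$; it is false for generic $h$. Moreover, Beurling--Ahlfors extends a quasisymmetric homeomorphism of $S^1$ (or $\bbR$) \emph{to itself}; it does not extend a map $S^1 \to \gamma$ to a map $\bbD \to \Omega$. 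To make this approach work one must introduce Riemann maps $\phi_1 : \bbD \to \Omega_1$ and $\phi_2 : \CP^1 \setminus \widebar{\bbD} \to \Omega_2$ onto the two complementary domains of $\gamma$, pass to the conformal welding homeomorphism $w = \phi_1^{-1} \circ \phi_2|_{S^1} : S^1 \to S^1$, and prove that $w$ is quasisymmetric with modulus depending only on $c$. That last step is the real analytic content of the converse direction (it requires Koebe-type distortion estimates applied to the Riemann maps together with the bounded turning hypothesis), and your write-up neither identifies the welding map nor indicates how to control it. The gluing and removability of $S^1$ at the end are fine once the welding step is in place, but as written the argument skips the heart of the proof.
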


In the opposite direction of the definition of quasicircularity in terms of quasiconformal maps, quasiconformal maps of $\CP^1$ are characterized in terms of their distortion of circles.

\begin{theorem}[Aseev \cite{aseev2014quasiconformal}]\label{thm-aseev}
    Let $\Omega \subset \bbC$ be a domain, and let $f: \Omega \to \bbC$ be an injection.
    That $f$ is a quasiconformal homeomorphism is equivalent to the existence of a $K \geq 1$ so that for every $z \in \Omega$ there is a neighborhood $U_{z}$ so that for every circle $C$ in $U_z$ the image $f(C)$ is a $K$-quasicircle.
\end{theorem}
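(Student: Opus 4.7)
The plan is to prove the two implications separately. The reverse direction, showing that the circle-to-quasicircle condition implies quasiconformality, is the substantial content of the theorem.

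For the forward direction, suppose $f$ is $K$-quasiconformal on $\Omega$. Given $z \in \Omega$, choose a small round disk $U_z$ with compact closure in $\Omega$. The Measurable Riemann Mapping Theorem, applied to the Beltrami coefficient $\mu_f|_{U_z}$ extended by zero to $\CP^1$, produces a global $K$-quasiconformal self-map $\tilde f$ of $\CP^1$ such that $f = g \circ \tilde f$ on $U_z$, with $g$ holomorphic on $\tilde f(U_z)$. Any circle $C \subset U_z$ is mapped by $\tilde f$ to a $K$-quasicircle of $\CP^1$ by definition, and since $g$ is holomorphic on a neighborhood of the compact set $\tilde f(\overline{U_z})$, it is uniformly bilipschitz there. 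Ahlfors' $4$-point bounded turning characterization is preserved by bilipschitz maps up to a controlled factor, so $f(C) = g(\tilde f(C))$ is a $K'$-quasicircle with $K'$ depending on $K$ and the bilipschitz constant of $g$. Shrinking $U_z$ to force this bilipschitz constant below a fixed threshold yields a single $K'$ that is uniform in $z$.

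For the reverse direction, assume the circle-to-quasicircle hypothesis holds with constant $K$. The $4$-point bounded turning condition on images of arbitrarily small circles around a point $z$ implies continuity of $f$ at $z$, so $f$ is a homeomorphism onto its image by injectivity and invariance of domain. I would then upgrade continuity to quasiconformality using the metric characterization: a planar homeomorphism is quasiconformal provided
\[
H_f(z) := \limsup_{r \to 0} \frac{\max_{|w-z|=r}|f(w)-f(z)|}{\min_{|w-z|=r}|f(w)-f(z)|}
\]
is essentially bounded in $z$, a classical equivalence due to Gehring and V\"ais\"al\"a. To bound $H_f(z)$, consider pairs of concentric circles $C_r \subset C_R \subset U_z$ whose images are nested $K$-quasicircles bounding a topological annulus in $\bbC$. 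Standard modulus estimates for annuli bounded by quasicircles control the modulus of this image annulus from above and below in terms of the bounding curves, and comparison with the round-annulus modulus $\log(R/r)/(2\pi)$ yields a bound on $H_f(z)$ depending only on $K$.

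The principal obstacle is that a single $K$-quasicircle can be arbitrarily elongated and so does not, on its own, control $\max/\min$ distances from an interior point. The resolution is that the hypothesis applies simultaneously to \emph{every} concentric circle in $U_z$, permitting an iterative modulus comparison across a whole family of nested annuli that rules out severe elongation of any individual image curve. This bootstrap from a family of quasicircle estimates to an infinitesimal dilatation bound is the analytic heart of the theorem.
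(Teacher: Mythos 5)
The paper does not prove this statement; it quotes it as a known theorem of Aseev \cite{aseev2014quasiconformal} and applies it as a black box in \S\ref{ss-quasiconformality}, so there is no in-paper argument to compare against. Evaluating your attempt on its own merits: the forward direction is essentially sound (technically you should run the Beltrami construction on a slightly larger disk than $U_z$, so that the holomorphic factor $g$ is conformal on a neighborhood of $\overline{\tilde f(U_z)}$ rather than merely on $\tilde f(U_z)$, but that is a cosmetic fix), and this is the easy half.

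The reverse direction has a genuine gap, which you yourself flag but do not close. The proposed bound on $H_f(z)$ via modulus estimates on image annuli does not follow from the stated hypotheses: the modulus of a ring domain bounded by two nested $K$-quasicircles is not controlled by $K$ alone, since the inner curve can be made arbitrarily small or arbitrarily close to the outer one, so knowing both $f(C_r)$ and $f(C_R)$ are $K$-quasicircles gives no a priori control, upper or lower, on $\operatorname{mod} f(A_{r,R})$. The phrase ``iterative modulus comparison across a whole family of nested annuli'' names the difficulty rather than resolving it; you would need to identify a quantity that improves under iteration and explain why. More importantly, you only invoke the hypothesis for circles concentric at $z$, whereas the theorem assumes that \emph{every} circle in $U_z$ --- including circles through $z$, mutually tangent circles, and off-center nested configurations --- is sent to a $K$-quasicircle. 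The strength of the hypothesis, and the leverage one needs, lives precisely in these non-concentric configurations, and the attempt does not touch them. Finally, the opening claim that bounded turning on images of small circles about $z$ yields continuity of $f$ at $z$ is asserted without justification and is itself nontrivial: the $4$-point bounded turning condition is scale-invariant, so by itself it places no upper bound on $\operatorname{diam} f(C_r)$ as $r \to 0$, and continuity is part of what the theorem must establish rather than something that can be waved through.
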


We note that this theorem considers quasiconformality without the requirement of orientation-preservation.

\subsection*{Extension results} In producing quasiconformal maps, we shall find extension results in $\CP^1$ useful.
In particular, we shall use the following two theorems.
First, we shall use the smoothness in the following result of Tukia.

\begin{theorem}[Tukia {\cite{tukia1981extension}}]\label{thm-smooth-away-from-bad}
    Let $\gamma \subset \CP^1$ be a quasicircle.
    Then there is a quasiconformal map of $\CP^1$ taking $\bbR$ to $\gamma$ that is smooth away from $\gamma$.
\end{theorem}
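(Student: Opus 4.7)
The plan is to produce the desired map $\Phi : \CP^1 \to \CP^1$ by uniformizing the two complementary Jordan domains of $\gamma$ with Riemann maps and then correcting the resulting mismatch of boundary parametrizations by a smooth quasiconformal extension of a quasisymmetric homeomorphism of $\bbR \cup \{\infty\}$.

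Let $D_1, D_2 \subset \CP^1$ be the components of $\CP^1 \setminus \gamma$ and $H_1, H_2$ the components of $\CP^1 \setminus (\bbR \cup \{\infty\})$. By the Riemann mapping theorem there are conformal maps $\phi_i : H_i \to D_i$, each of which extends to a homeomorphism $\bar\phi_i : \bar H_i \to \bar D_i$ by Carath\'eodory's theorem, since $\gamma$ is a Jordan curve. Because $\gamma$ is a quasicircle, a classical result of Ahlfors gives that the boundary parametrizations $\psi_i := \bar\phi_i|_{\partial H_i} : \bbR \cup \{\infty\} \to \gamma$ are quasisymmetric. Their comparison $h := \psi_2^{-1} \circ \psi_1$ is therefore a quasisymmetric self-homeomorphism of $\bbR \cup \{\infty\}$. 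The key input I would invoke is that $h$ extends to a quasiconformal self-homeomorphism $H$ of $\bar H_2$ that is $C^\infty$ on the interior $H_2$; this can be supplied either by the Douady--Earle barycentric extension or by a suitably smoothed Beurling--Ahlfors construction.

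Given such $H$, I define $\Phi|_{\bar H_1} = \phi_1$ and $\Phi|_{\bar H_2} = \phi_2 \circ H$. The two definitions agree on $\bbR \cup \{\infty\}$ because $\phi_2 \circ H|_{\partial H_2} = \psi_2 \circ h = \psi_1 = \phi_1|_{\partial H_1}$, so $\Phi$ is a well-defined homeomorphism of $\CP^1$, and $\Phi(\bbR \cup \{\infty\}) = \gamma$ by construction. On $H_1$ the map $\Phi$ is conformal; on $H_2$ it is the composition of a quasiconformal map with a conformal one and is therefore quasiconformal. Since $\bbR \cup \{\infty\}$ has measure zero and $\Phi$ has uniformly bounded dilatation on each side, the standard measurable-Riemann removability of real-analytic arcs yields that $\Phi$ is globally quasiconformal. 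For smoothness away from $\gamma$ it suffices to check $\Phi$ is smooth on $H_1 \cup H_2$: on $H_1$, $\phi_1$ is analytic, and on $H_2$, $\phi_2 \circ H$ is the composition of an analytic map with a $C^\infty$ diffeomorphism, hence smooth.

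The main obstacle is the smoothness input in the intermediate step. It is classical that a quasisymmetric homeomorphism of $\bbR$ admits \emph{some} quasiconformal extension to $H_2$ (Beurling--Ahlfors), but such an extension is in general only as regular as the boundary data allow, so additional work is required to promote it to a diffeomorphism of the open half-plane. Verifying that either the Douady--Earle extension or a convolution-smoothed variant of Beurling--Ahlfors is simultaneously quasiconformal and $C^\infty$ on $H_2$ is the technical heart of the argument; once that is in hand, the rest of the proof is a direct gluing and a transfer of regularity through the analytic Riemann maps.
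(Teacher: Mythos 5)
The paper gives no proof of its own here; it defers to Tukia's extension theorem (a quasisymmetric embedding of $\bbR$ into $\CP^1$ extends to a quasiconformal self-homeomorphism of $\CP^1$) combined with the Tukia--V\"ais\"al\"a characterization of quasicircles as images of $\bbR$ under quasisymmetric embeddings, and attributes the off-curve smoothness to the Beurling--Ahlfors construction inside Tukia's proof. Your route is genuinely different: rather than extending a quasisymmetric embedding of $\bbR$ into the plane, you uniformize both complementary Jordan domains and reduce everything to the conformal welding $h = \psi_2^{-1}\circ\psi_1$, a quasisymmetric self-map of $\bbR\cup\{\infty\}$, which you extend quasiconformally and smoothly to one half-plane before gluing. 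This is sound, and it has the virtue of localizing the entire difficulty to a classical one-dimensional extension problem and making the regularity away from $\gamma$ transparent (conformal on one side, conformal post-composed with a smooth map on the other). The smoothness input you single out is exactly the crux, and the tools you name do supply it: the Douady--Earle extension is real-analytic in the open disk, and a kernel-smoothed Beurling--Ahlfors variant (replace the box average by a fixed $C^\infty$ bump) is $C^\infty$ for $y>0$ since after the substitution $s = x+ty$ one may differentiate under the integral arbitrarily often. Two small cautions. It is the welding $h$ that is quasisymmetric as a self-map of $\bbR$ in the classical sense; each $\psi_i$ is quasisymmetric only in the Tukia--V\"ais\"al\"a metric-space sense, though since such maps are closed under inversion and composition your conclusion about $h$ is correct. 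And for the final gluing, the clean statement to invoke is that a circle (here $\bbR\cup\{\infty\}$) is quasiconformally removable, rather than an appeal to ``measurable-Riemann removability of real-analytic arcs.'' As written, your argument would serve as a self-contained alternative to the citation in the paper.
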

This exact statement follows from the main result of \cite{tukia1981extension} and \cite[Theorem 4.9]{tukiaVaisala1980quasisymmetric}, as discussed on page 2 of \cite{tukia1981extension}.
We note that the smoothness we use comes from properties of the Beurling--Ahlfors extension \cite{beurlingAhlfors1956boundary}.

Second, we shall use the following extension result.
\begin{theorem}\label{thm-bilip-extension}
    Let $L \geq 1$. Then there is a $K = K(L)$ so that every $L$-bilipschitz embedding $f: S^1 \to \bbD$ with respect to the Euclidean metric on $\bbD$ admits a $K$-quasiconformal extension to the disk.
\end{theorem}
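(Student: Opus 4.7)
The plan is to reduce the problem to a standard extension theorem for quasisymmetric self-homeomorphisms of $S^1$ via conformal uniformization. Let $\gamma := f(S^1)$ and let $\Omega$ denote the bounded Jordan domain enclosed by $\gamma$; the goal is to build a $K(L)$-quasiconformal extension $F : \overline{\bbD} \to \overline{\Omega}$ of $f$.

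First I would show that $\gamma$ is a $c(L)$-quasicircle in $\CP^1$. Since $f$ is $L$-bilipschitz, for any four points $p_i = f(q_i)$ on $\gamma$ the Euclidean distances $|p_i-p_j|$ and $|q_i-q_j|$ agree up to multiplicative factors of $L$, so cross-ratios transform by at most a factor of $L^4$. The circle $S^1$ itself satisfies the $4$-point bounded turning condition with a universal constant, hence $\gamma$ does so with a constant depending only on $L$, and Ahlfors's theorem then yields that $\gamma$ is a $K_0(L)$-quasicircle.

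Next, let $\phi : \bbD \to \Omega$ be a Riemann map. Because $\gamma$ is a $K_0(L)$-quasicircle, $\Omega$ is a quasidisk, so by the classical reflection-principle argument $\phi$ extends to a global quasiconformal homeomorphism $\widetilde{\phi}$ of $\CP^1$ with constant depending only on $K_0(L)$, and in particular $\phi$ extends to a quasisymmetric homeomorphism $S^1 \to \gamma$ with constants depending only on $L$. Now consider the composition $\psi := \phi^{-1} \circ f : S^1 \to S^1$. Since $f$ is $L$-bilipschitz between Euclidean arcs, it is $\eta$-quasisymmetric with $\eta$ depending only on $L$; and $\phi^{-1}|_\gamma$ is quasisymmetric with constant depending only on $L$. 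Quasisymmetry is preserved under composition in a controlled way, so $\psi$ is a quasisymmetric self-homeomorphism of $S^1$ with distortion depending only on $L$.

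Finally, I would apply the Beurling--Ahlfors (or Douady--Earle) extension to promote $\psi$ to a $K_1(L)$-quasiconformal self-map $\Psi : \bbD \to \bbD$, and set $F := \phi \circ \Psi$. Then $F$ is quasiconformal with constant depending only on $L$ (the conformal factor contributes nothing), and by construction $F|_{S^1} = \phi \circ \phi^{-1} \circ f = f$, giving the desired extension.

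The principal obstacle is the first step: one must ensure that the bilipschitz hypothesis on $f$ really yields a uniform quasicircle bound on $\gamma$ for \emph{every} cyclically ordered $4$-tuple, which requires care because the $4$-point bounded turning condition compares cross-ratios whose numerators and denominators pair distances that are not uniformly of the same scale. A clean way to handle this is to use that $f$ distorts each individual distance by at most $L$, so each ratio in the cross-ratio formula is controlled by $L^2$ and the full cross-ratio by $L^4$, uniformly over all $4$-tuples. The second (milder) subtlety is verifying that the quasisymmetry constants in the composition in step three depend only on the individual quasisymmetry constants, which is a standard property but should be invoked explicitly.
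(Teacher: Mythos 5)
Your argument is correct in outline but takes a genuinely different route from the paper's. The paper's proof is a two-line citation: Tukia's planar Lipschitz Sch\"onflies theorem (Theorem A of \cite{tukia1980planarScoenflies}) extends the $L$-bilipschitz embedding $f : S^1 \to \bbD$ to an $L'(L)$-bilipschitz homeomorphism of the plane, and restricting to $\overline{\bbD}$ and using the elementary fact that $L'$-bilipschitz planar maps are $K(L')$-quasiconformal finishes the job. Your approach instead goes through the full conformal-analytic machinery: verify the $4$-point bounded turning condition for $\gamma = f(S^1)$ (the cross-ratio computation with $L^4$ distortion and the bound $c=1$ for $S^1$ are both correct), invoke Ahlfors to get a quasicircle constant, pass to the Riemann map of the resulting quasidisk, and extend the boundary correspondence by Beurling--Ahlfors/Douady--Earle. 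This buys you independence from the Lipschitz Sch\"onflies theorem at the cost of more steps and a slightly delicate constant-tracking problem in step three: the boundary map $\phi^{-1}|_\gamma$ of a quasidisk's Riemann map is naturally \emph{quasi-M\"obius} rather than quasisymmetric (a quasi-M\"obius self-map of $S^1$ need not be quasisymmetric without a three-point normalization), so you should either compose with a M\"obius automorphism of $\bbD$ to enforce a normalization, or invoke the fact that the Douady--Earle extension already works on quasi-M\"obius circle maps with constants controlled by the quasi-M\"obius modulus. Once that phrasing is tightened, the argument is complete, and it yields the same uniform $K(L)$ as the paper.
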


This is a corollary of the planar Lipschitz Sch\"onflies theorem \cite[Theorem A]{tukia1980planarScoenflies}, together with the observation (e.g. \cite[\S 7]{tukia1981extension}) that for every $L > 1$ there is a $K(L)$ so that every $L$-bilipschitz embedding $\bbD \to \bbD$ is $K$-quasiconformal.

\subsection{Surfaces with genus} 
In higher genus, some care is needed in choosing a characterization of quasicircles in which the quality constants behave well for essential curves.
We use the following definition:

\begin{definition}\label{def-qcirc-on-genus}
    Let $S$ be a closed, oriented surface with a metric $g$ of constant sectional curvature $0$ or $-1$. An {\rm{essential $K$-quasicircle}} in $(S, g)$ is the image of a simple closed geodesic $\gamma$ in $S$ under a $K$-quasiconformal homeomorphism of $S$.
\end{definition}

Here, $K$-quasiconformality is with respect to the induced conformal structure of $(S,g)$.
This characterization is adapted to M\"obius geometry as these metric geodesics are circular arcs in the induced $\mathbb{CP}^1$-structures. 
Definition \ref{def-qcirc-on-genus} gives the usual family of quasicircles (Remark \ref{remark-def-well-behaved}).
We remark that the quasicircularity constants in our definition have arbitrarily large differences with those appearing in metric definitions of quasicircularity.
A source of this discrepancy is that simple geodesics may wrap back near themselves as they traverse the topology of $S$.

On a practical level, Definition \ref{def-qcirc-on-genus} is convenient for working with curves known to be quasicircles, but requires the production of a quasiconformal mapping in the verification that a curve is a quasicircle.

It will help us, at times when we are not considering quality constants, to be able to swap between characterizations of quasiconformality.
To that end, we recall that an embedded curve is a quasicircle if and only if it satisfies the (two-point) metric bounded turning condition: there is a $k< \infty$ so that for all $x,y \in \gamma$ there is a connected component of $\gamma - \{x,y\} $ of diameter bounded by $kd(x,y)$.

\begin{remark}\label{remark-def-well-behaved}
    One may verify that Definition \ref{def-qcirc-on-genus} is equivalent to the metric bounded turning characterization, with potentially non-comparable quasicircularity constants.
    
    Indeed, as simple closed geodesics are metric bounded turning quasicircles, the implication that any image of a simple closed geodesic under a quasiconformal map is a metric quasicircle follows.
    The other direction is obtained as follows.
    Let $\gamma$ be a metric bounded turning quasicircle in $S$ and $\gamma_0$ be the geodesic in the free homotopy class of $\gamma_0$.
    Conformally identify an annular neighborhood of $\gamma$ with a ring domain in $\mathbb{C}$ and a metric $\varepsilon$-neighborhood of $\gamma_0$ with an annulus in $\mathbb{C}$ in which $\gamma_0$ is the circle.
    By Theorem \ref{thm-smooth-away-from-bad}, there is a quasiconformal homeomorphism of a neighborhood of $\gamma_0$ to a neighborhood of $\gamma$ that takes $\gamma_0$ to $\gamma$ and is smooth away from $\gamma_0$.
    From smoothness, extending quasiconformally to the rest of $S$ is classical.
\end{remark}

Parts of our arguments below that do not consider quality constants make use of inessential curves.
We never refer to quality constants for inessential quasicircles.
To be concrete, we adopt the convention that inessential curves on $S$ are quasicircles if they satisfy the two-point metric bounded turning condition.

Definition \ref{def-qcirc-on-genus} is independent of the choice of metric in the following sense. 
\begin{lemma}\label{lemma-qc-independence}
    Suppose $g$ and $g'$ are locally homogeneous metrics on $S$. Then for every isotopy class $[\alpha]$ of essential simple closed curves in $S$, there is a $K(\alpha)$ so that if $\gamma \in [\alpha]$ is a $K'$-quasicircle with respect to $g$ then $\gamma$ is a $K(\alpha)K'$-quasicircle with respect to $g'$.
\end{lemma}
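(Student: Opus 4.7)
The plan is to convert a quasicircular witness for $\gamma$ with respect to $g$ into one with respect to $g'$ by composing with (i) a fixed diffeomorphism exchanging the geodesic representatives of $[\alpha]$ in the two metrics, and (ii) the identity map between the two conformal structures, which is itself $C^1$ and hence quasiconformal with dilatation depending only on $g, g'$.

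Concretely, let $\alpha_g$ and $\alpha_{g'}$ be the geodesic representatives of $[\alpha]$ with respect to $g$ and $g'$. Because both are smooth simple closed curves in the same isotopy class, an ambient isotopy argument produces a diffeomorphism $h \colon S \to S$ with $h(\alpha_{g'}) = \alpha_g$; being $C^1$ on a closed surface, $h$ is $K_1$-quasiconformal with respect to both $[g]$ and $[g']$ for some $K_1 = K_1(\alpha, g, g')$ that does not depend on any particular $\gamma$. Similarly, $\mathrm{id} \colon (S, [g]) \to (S, [g'])$ is a $C^1$-diffeomorphism of a closed surface and hence $K_0$-quasiconformal for a constant $K_0 = K_0(g, g')$ not depending on $\alpha$. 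Now if $\gamma \in [\alpha]$ is a $K'$-quasicircle with respect to $g$, Definition~\ref{def-qcirc-on-genus} furnishes a $K'$-quasiconformal self-homeomorphism $f$ of $(S, [g])$ and a simple closed $g$-geodesic $\gamma_0$ with $f(\gamma_0) = \gamma$. After a normalization step rewriting the representation so that $\gamma_0 = \alpha_g$ at the cost of a multiplicative factor $C(\alpha)$ depending only on $[\alpha]$ and the metrics, set $F := f \circ h$ and decompose
\[
F \colon (S, [g']) \xrightarrow{h} (S, [g']) \xrightarrow{\mathrm{id}} (S, [g]) \xrightarrow{f} (S, [g]) \xrightarrow{\mathrm{id}} (S, [g']),
\]
obtaining total dilatation at most $K_1 K_0^2 C(\alpha) K'$ with respect to $[g']$. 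Since $F(\alpha_{g'}) = \gamma$ and $\alpha_{g'}$ is a $g'$-geodesic, this witnesses $\gamma$ as a $K(\alpha) K'$-quasicircle with respect to $g'$ for $K(\alpha) := K_1 K_0^2 C(\alpha)$.

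The main obstacle is the normalization step, which must replace the arbitrary reference geodesic $\gamma_0$ appearing in the definition by the specific geodesic $\alpha_g$ with cost bounded in terms of $[\alpha]$ alone. A priori, $[\gamma_0]$ could range over many mapping-class-group orbits as $\gamma$ varies within $[\alpha]$, so the cost of the geodesic substitution needs to be controlled by data intrinsic to $\gamma$. I would handle this by passing through the metric bounded turning characterization of Remark~\ref{remark-def-well-behaved}, which is a two-point condition intrinsic to $\gamma$ that does not reference any particular geodesic, and then invoking the quasiconformal extension from that remark applied to a conformal identification of an annular neighborhood of $\alpha_g$ with an annular neighborhood of $\gamma$. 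The remaining steps are routine composition of quasiconformal dilatations.
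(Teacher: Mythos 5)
Your decomposition has the right ingredients (geodesic-exchange map $h$, the $C^1$ identity between conformal structures, compose dilatations), and you correctly spot where it breaks: the geodesic $\gamma_0$ that Definition~\ref{def-qcirc-on-genus} furnishes need not be $\alpha_g$, and $[\gamma_0] = (f^{-1})_*[\alpha]$ can range over infinitely many isotopy classes as $K'$ grows, so no map $(S,[g])\to(S,[g])$ carrying $\alpha_g$ to $\gamma_0$ has dilatation controlled by $[\alpha]$ alone. The trouble is that the fix you offer does not repair the gap. Passing to the metric bounded-turning condition abandons the decomposition $F = f\circ h$ wholesale and substitutes the argument of Remark~\ref{remark-def-well-behaved}; but, as the paper stresses there, the translation between bounded-turning constants and the M\"obius-geometric quality constants of Definition~\ref{def-qcirc-on-genus} is ``potentially non-comparable.'' That route can give you \emph{some} $K''(K',\alpha)$, but there is no reason it produces the multiplicative factor $C(\alpha)$ your composition needs, i.e.\ a bound of the exact shape $K(\alpha)K'$. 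So the ``normalization step'' is asserted but not established, and the writeup ends up switching proofs in mid-stream without completing either.

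A cleaner way to stay within your decomposition is to not aim at $\alpha_g$ at all. Keep whichever $g$-geodesic $\gamma_0$ the definition hands you, let $\gamma_0'$ be the $g'$-geodesic in the \emph{same} class $[\gamma_0]$, and build $h$ on $(S,[g'])$ with $h(\gamma_0')=\gamma_0$; then $F = f\circ h$, read on $(S,[g'])$ via two copies of the $C^1$ identity, sends a $g'$-geodesic to $\gamma$ with dilatation at most $K_0^2\,K(h)\,K'$. The entire burden then falls on bounding $K(h)$ independently of $\gamma$, which is exactly what the paper's remark after the lemma is pointing at: both $\gamma_0$ and $\gamma_0'$ are smooth and (after lifting to $\bbH^2$) are respectively a geodesic and a uniform quasi-geodesic with the same endpoints at infinity, so they fellow-travel with bounds depending only on $g,g'$, and an elementary smooth extension replaces any appeal to Tukia's Theorem~\ref{thm-smooth-away-from-bad}. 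That uniformity claim still has to be spelled out --- it is the real content --- but it is a single, checkable geometric statement rather than the unquantified ``normalization'' your draft leans on.
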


One may see Lemma \ref{lemma-qc-independence} using the argument in Remark \ref{remark-def-well-behaved}, though the smoothness of geodesics makes Theorem \ref{thm-smooth-away-from-bad} a far deeper result than is necessary.

We conclude by mentioning that the metric bounded turning characterization of quasicircles localizes in the following manner.
The involved quality constants are not uniform.

\begin{lemma}[Non-uniform localization]\label{lemma-bt-localizes}
    Let $\gamma \subset S$ be an embedded circle that admits an open covering by neighborhoods $U_i$ on which it satisfies the metric bounded turning condition for constants $k_i$.
    Then $\gamma$ is a quasicircle.
\end{lemma}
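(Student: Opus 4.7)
The plan is to lift the local bounded turning data to a global two-point bounded turning estimate via compactness, and then invoke the characterization of quasicircles by the two-point metric bounded turning condition recalled immediately before the lemma.

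First, I would use compactness of $\gamma$ to extract a finite subcover $U_1, \dots, U_n$ from $\{U_i\}$ with associated constants $k_1, \dots, k_n$, and set $K_0 := \max_i k_i$. Applying the Lebesgue number lemma to the open cover $\{U_i \cap \gamma\}$ of the compact metric space $\gamma$ (equipped with the metric inherited from $S$) produces a $\delta > 0$ such that any two points of $\gamma$ at distance at most $\delta$ lie in a common $U_i$.

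Next, I would dichotomize on whether a pair $x, y \in \gamma$ is close or far. In the close case $d(x, y) \leq \delta$, the pair lies in some $U_i$, and the local hypothesis yields a connected component of $\gamma - \{x,y\}$ of diameter at most $k_i\, d(x,y) \leq K_0\, d(x,y)$. In the far case $d(x, y) > \delta$, either component of $\gamma - \{x,y\}$ has diameter bounded by $\op{diam}(S)$, which is finite by compactness of $S$, and hence by $(\op{diam}(S)/\delta)\, d(x, y)$. Taking $k := \max(K_0, \op{diam}(S)/\delta)$ thus gives a uniform two-point metric bounded turning constant for $\gamma$, so $\gamma$ is a quasicircle by the characterization recalled in the paragraph preceding the lemma.

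There is no serious obstacle to this approach; the one subtlety to note is that the arc witnessing the local bounded turning condition on $U_i$ need not itself be contained in $U_i$, but the hypothesis measures the arc's diameter as a subset of $S$, so combining local estimates with the coarse global bound in the far regime causes no trouble. Consistent with the authors' remark that the involved quality constants are not uniform, the final $k$ depends on $\gamma$ through the choice of finite subcover and the Lebesgue number $\delta$, and cannot be written purely in terms of the local $k_i$.
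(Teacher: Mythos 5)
Your proof is correct and follows the same route as the paper: finite subcover, Lebesgue number $\delta$, and a close/far dichotomy in which $\delta$-close pairs inherit the local constant $\max_i k_i$ while far pairs are handled by the crude bound $\op{diam}(S)/\delta$. The paper's proof is just a terser version of this, leaving the far-case constant implicit under the word ``compactness.''
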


\begin{proof}
    Take a finite sub-cover $U_1, ..., U_n$ of $\gamma$.
    The existence of a Lebesgue number $\delta$ for the subcover implies that the metric bounded turning condition holds for some $k$ for all $\delta$-close pairs of points on $\gamma$, and compactness implies that for points separated by at least $\delta$ satisfy a $k'$-metric bounded turning inequality.
\end{proof}

\section{Graphs of Quasicircles}\label{s-main-defs}

The combinatorial objects we consider are analogues of the \textit{fine curve graphs} $\mathcal{C}^\dagger(S)$ of \cite{bowden2022quasimorphisms}, whose vertices are simple closed curves in $S$ with edges given by disjointness.
The following are the central objects of our study:

\begin{definition}[Quasicircle graph]
    For $S = S_g$ with $g \geq 2$, let $\mathcal{QC}^\dagger(S)$ be the graph with vertices the set of essential quasicircles in $S$ and edges between vertices representing disjoint quasicircles.
\end{definition}

Lemma \ref{lemma-qc-independence} shows that $\mathcal{QC}^\dagger(S)$ is a well-defined graph.
In fact, more information is invariant under change of reference data. 
To state this invariance, let us adopt the following notation.

\begin{definition}
    Let $S = S_g$ with $g \geq 2$.
    Let $\mathscr{S}$ denote the collection of isotopy classes of essential simple closed curves in $S$ and let $[\beta] \in \mathscr{S}$.
    Then define $\mathcal{QC}^\dagger_{[\beta]}(S)$ as the subgraph of $\mathcal{QC}^\dagger(S)$ induced by the vertices in the isotopy class $[\beta]$.
\end{definition}

We show in \S\ref{Section:BuildQCFromGraph}  that all automorphisms $\Phi$ of $\mathcal{QC}^\dagger(S)$ are induced by homeomorphisms $\varphi$ of $S$. 
We adopt the notation $\Phi([\beta]) = \varphi([\beta])$.
The (unlabeled) partition $\qcg(S) = \bigsqcup_{[\beta]\in \mathscr{S}} \qcg_{[\beta]}(S)$ is invariant under automorphisms of $\qcg(S)$ and so is well-defined.

There is a filtration $\mathcal{F} = \{ \mathcal{F}_K\}_{K \in [1,\infty)}$ of $\qcg(S)$ with $\mathcal{F}_K$ the set of $K$-quasicircles on $S$.
By \textit{filtration}, we mean that $\mathcal{F}_{K} \subset \mathcal{F}_{K'}$ for any $K < K'$ and $\mathcal{QC}^\dagger(S) = \bigcup_{K \geq 1}\mathcal{F}_K$.
The filtration $\mathcal{F}$ restricts to filtrations $\mathcal{F}_{[\beta]} = \{ \mathcal{F}_{K, [\beta]}\}_{K \in [1,\infty)}$ of each subgraph $\qcg_{[\beta]}(S)$.
Of course, $\mathcal{F}$ is not invariant under change of hyperbolic metric or application of a quasiconformal map of $S$.
But changing metrics respects $\mathcal{F}$ in the following weakly coarse sense (Lemma \ref{lemma-qc-independence}):

\begin{definition}\label{def-coarse-pres}
An automorphism $\Phi \in {\rm{Aut}}\qcg(S)$ is {\rm{weakly coarse order preserving}} if for every $K \geq 1$ and $[\beta] \in \mathscr{S}$ there is a $K' = K'(K,\beta) > 1$ so that $\Phi\left(\mathcal{F}_{K, [\beta]}\right) \subset \mathcal{F}_{K', \Phi([\beta])}$.
We say $\Phi$ is {\rm{coarsely order preserving}} if the constants $K'(K,\beta)$ can be taken to be independent of $\beta$.
    Let ${\rm{CAut}}\qcg(S)$ denote the group of weakly coarse order preserving automorphisms of $\qcg(S)$.
\end{definition}

Note that any $K$-quasiconformal homeomorphism $\varphi$ of $S$ induces a graph automorphism, which we denote by $\varphi_*$, of $\mathcal{QC}^\dagger(S)$. 
Note further that $\varphi_*$ is coarsely order preserving by the standard bound for quasiconformality constants of a composition, i.e. $\varphi_* \in \mathrm{CAut}\mathcal{QC}^\dagger(S)$.
This gives a natural homomorphism from $\mathcal{Q}(S)$ to $\mathrm{CAut}\mathcal{QC}^\dagger(S)$.
We will prove:

\begin{theorem}\label{thm-first-precise}
    Let $S = S_g$ with $g \geq 2$. Then the natural map $\mathcal{Q}(S) \to \mathrm{CAut}\mathcal{QC}^\dagger(S)$ is an isomorphism.
\end{theorem}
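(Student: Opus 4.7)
The plan is to verify injectivity of the map $\mathcal{Q}(S) \to \mathrm{CAut}\qcg(S)$ directly and to obtain surjectivity by combining the homeomorphism-recovery arguments of \S\ref{Section:TopologyToGraphs}--\S\ref{ss-get-homeo} with the quasiconformality upgrade of \S\ref{ss-quasiconformality}; well-definedness of the map was already noted in the discussion preceding the theorem. For injectivity, any $\varphi \in \mathcal{Q}(S)$ with $\varphi_* = \mathrm{id}$ fixes every essential quasicircle setwise, and in particular every smooth essential simple closed curve. Since any point of $S$ arises as the transverse intersection of a suitable triple of smooth essential simple closed curves, $\varphi$ fixes a dense set and is therefore the identity.

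For surjectivity we proceed in two stages. Stage one, carried out in \S\ref{Section:TopologyToGraphs}--\S\ref{ss-get-homeo}, produces from any $\Phi \in \mathrm{Aut}\qcg(S)$ a homeomorphism $\varphi$ of $S$ with $\varphi_* = \Phi$, by adapting the reconstruction-of-topology arguments of \cite{booth2024automorphisms, long2023automorphisms} to the restricted vertex set of quasicircles. Stage two, carried out in \S\ref{ss-quasiconformality} and the heart of the theorem, shows that if $\Phi$ is coarsely order preserving then $\varphi$ is quasiconformal. The tool here is Aseev's theorem \ref{thm-aseev}: it is enough to show that each $z \in S$ admits a chart neighborhood $U_z$ in which $\varphi(C)$ is a $K$-quasicircle for every round Euclidean circle $C \subset U_z$, with $K$ independent of $C$ and $z$.

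The central obstacle is that these test circles are inessential, while coarse order preservation controls $\Phi$ only on the essential vertices of $\qcg(S)$. To bridge this, for each sub-arc $\alpha$ of such a $C$ we plan to build an essential quasicircle $\gamma_\alpha \subset S$ containing $\alpha$ whose quasicircularity constant is bounded independently of $\alpha$: starting from $\alpha$, extend it through controlled bilipschitz arcs (via Theorem \ref{thm-bilip-extension}) that exit $U_z$, concatenate with a fixed background quasicircle that closes $\gamma_\alpha$ into an essential simple loop, and apply Lemma \ref{lemma-bt-localizes} to certify $\gamma_\alpha$ is a quasicircle. Coarse order preservation then bounds the metric bounded turning constants of $\varphi(\gamma_\alpha)$, and in particular of $\varphi(\alpha)$. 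Covering $C$ by finitely many such arcs and invoking Lemma \ref{lemma-bt-localizes} once more to $\varphi(C)$ yields the uniform bound that Aseev requires. The main technical difficulty is carrying out the construction of $\gamma_\alpha$ with a single quality constant as the scale of $C$ shrinks and $z$ varies, since the bilipschitz extensions and the choice of background curve must be arranged uniformly across the family.
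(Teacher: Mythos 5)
Your overall plan — reduce quasiconformality to Aseev's theorem, control $\varphi(C)$ for small circles $C$ by embedding arcs of $C$ into essential quasicircles of uniformly bounded quality, and invoke coarse order preservation — is the same as the paper's, and you correctly identify that uniformity of the quality constant over all $C$ and $z$ is the crux. But your proposed route to that uniformity does not close: you cover $C$ by finitely many arcs and invoke Lemma~\ref{lemma-bt-localizes} to deduce that $\varphi(C)$ is a $K$-quasicircle with uniform $K$. That lemma is explicitly non-quantitative (its own statement says ``The involved quality constants are not uniform''); its proof runs through a Lebesgue number and a compactness argument whose output depends on the particular curve and cover and degenerates as the scale of $C$ shrinks and $z$ varies. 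So the step ``yields the uniform bound that Aseev requires'' is exactly where the argument breaks, and the difficulty you flag at the end — arranging the construction with a single quality constant across the family — is not resolved by the tools you have assembled.

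The paper sidesteps localization entirely by using the M\"obius-invariant $4$-point cross-ratio characterization of quasicircles (Ahlfors' theorem) rather than the $2$-point metric bounded turning condition. Lemma~\ref{lemma-stick-arc-in-QC} produces, for each circularly ordered quadruple $q$ on $C$, a single $(3/4)$-arc of $C$ containing $q$ that sits inside an essential $K_0$-quasicircle from a fixed finite family $F$ of isotopy classes, with $K_0$ independent of $C$, $q$, and $z$. Weak coarse order preservation, applied only to the finite family $F$, bounds the quasicircularity constant of the image by some $K_1$; lifting the defining quasiconformal map to $\CP^1$ and applying Ahlfors' theorem bounds the cross-ratio of the images of the four points of $q$ by a uniform $c(K_1^2)$. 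Because the cross-ratio is M\"obius-invariant this bound is scale-invariant, and because it is verified directly for \emph{every} quadruple rather than stitched together from local $2$-point estimates, the converse direction of Ahlfors' theorem certifies $\widetilde{\varphi}(\widetilde{C})$ as a $K(c(K_1^2))$-quasicircle with no localization lemma required. The shift from the $2$-point metric criterion to the $4$-point cross-ratio criterion is the key idea your proposal is missing; your use of Theorem~\ref{thm-bilip-extension} to ``extend arcs'' also differs from its actual role in the paper, where it is applied cell by cell to a pair of embedded graphs to produce the uniform quasiconformal map realizing $\gamma_\alpha$ as a quasicircle, but this is secondary.
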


Note Theorem \ref{thm-first-precise} implies that the coarse order preservation of $\Phi$ is independent of the reference hyperbolic metric.
Since quasiconformal homeomorphisms $\varphi$ of $S$ induce automorphisms $\varphi_*$ that are coarsely order preserving, we obtain:

\begin{corollary}\label{cor-weakly-coarse-is-coarse}
    An automorphism $\Phi \in \mathrm{Aut}(\qcg(S))$ is coarsely order preserving if and only if $\Phi$ weakly coarse order preserving. 
\end{corollary}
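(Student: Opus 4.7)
The plan is to deduce this corollary directly from Theorem \ref{thm-first-precise}. The forward direction (coarsely order preserving $\Rightarrow$ weakly coarse order preserving) is immediate from Definition \ref{def-coarse-pres}: any uniform constants $K'(K)$ witnessing coarse order preservation also serve as the constants $K'(K,\beta) := K'(K)$ required for weak coarse order preservation.

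For the converse, suppose $\Phi$ is weakly coarse order preserving, so by definition $\Phi \in \mathrm{CAut}\qcg(S)$. Theorem \ref{thm-first-precise} then furnishes a quasiconformal homeomorphism $\varphi$ of $S$ with $\Phi = \varphi_*$. I fix $K_0 \geq 1$ so that $\varphi$ is $K_0$-quasiconformal.

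Next I verify directly that $\varphi_*$ is coarsely order preserving with the uniform constant $K' := K_0 K$. Given any isotopy class $[\beta] \in \mathscr{S}$ and any $\gamma \in \mathcal{F}_{K,[\beta]}$, Definition \ref{def-qcirc-on-genus} produces a simple closed geodesic $\gamma_0$ and a $K$-quasiconformal homeomorphism $\psi$ of $S$ with $\gamma = \psi(\gamma_0)$. Then $\varphi(\gamma) = (\varphi \circ \psi)(\gamma_0)$ realizes $\varphi(\gamma)$ as the image of a geodesic under the composition $\varphi \circ \psi$, which is $K_0 K$-quasiconformal by the standard composition bound. Hence $\varphi(\gamma) \in \mathcal{F}_{K_0 K, \varphi([\beta])}$, with $K_0 K$ independent of $[\beta]$.

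There is no real obstacle once Theorem \ref{thm-first-precise} is in hand; all the substantive work is packaged into that theorem. The role of the corollary is to record that the \emph{a priori} weaker pointwise-in-$[\beta]$ control of Definition \ref{def-coarse-pres} was not a loss of generality, since a quasiconformal representative automatically upgrades the $[\beta]$-dependent constants to the uniform bound $K_0 K$.
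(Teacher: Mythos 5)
Your proof is correct and follows the paper's (terse) reasoning essentially verbatim: the forward direction is definitional, and the converse is deduced from Theorem \ref{thm-first-precise} together with the fact that a $K_0$-quasiconformal homeomorphism maps $K$-quasicircles to $K_0 K$-quasicircles via the composition bound, giving a $\beta$-independent constant. You have simply unpacked the paper's remark that ``quasiconformal homeomorphisms $\varphi$ of $S$ induce automorphisms $\varphi_*$ that are coarsely order preserving'' into its one-line verification.
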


Theorem \ref{thm-first} then follows immediately from Corollary \ref{cor-weakly-coarse-is-coarse} and Theorem \ref{thm-first-precise}.

\begin{remark}\label{rmk-coarse-order}
    There are two reasons why we prefer to work with weak coarse order preservation in the following.
    First, it is weaker than coarse order preservation, and so Theorem \ref{thm-first-precise} is stronger than if we only worked with coarse order preservation.
    Second, changing metrics is easily seen to be weakly coarse order preserving, but it is much less clear whether or not the involved constants $K'(K,\beta)$ can be taken to be $\beta$-independent.
    So the notion of weak coarse order preservation has desirable naturality properties in our setting.
\end{remark}

After the above discussion, what is left to show is:
\begin{proposition}\label{prop-is-quasiconf}
    Let $S = S_g$ with $g \geq 2$ and let $\Phi \in \mathrm{CAut}\mathcal{QC}^\dagger(S)$. Then there exists $\varphi \in \mathcal{Q}(S)$ so that $\Phi = \varphi_*$.
\end{proposition}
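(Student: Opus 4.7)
The plan is to use Aseev's theorem (Theorem~\ref{thm-aseev}) applied locally on $S$. By the content of earlier sections, $\Phi$ is induced by some homeomorphism $\varphi$ of $S$, so it suffices to show that for each $p \in S$ there is a conformal chart $U_p$ around $p$ and a uniform $K<\infty$ such that every sufficiently small round circle $C\subset U_p$ is sent by $\varphi$ to a $K$-quasicircle (in a chart around $\varphi(p)$). The central tension is that small round circles are inessential, whereas weak coarse order preservation only controls essential quasicircles, and does so one isotopy class at a time.

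I would resolve this by embedding sub-arcs of $C$ into essential quasicircles in a single fixed isotopy class, transferring the control that way. Fix any isotopy class $[\beta]\in\mathscr{S}$ and a smooth representative $\gamma_\beta$ of $[\beta]$ that crosses $U_p$ in a single short transversal arc $\eta$. Given a small round circle $C\subset U_p$, cover $C$ by two arcs $\alpha_+$ and $\alpha_-$ with substantial overlap (e.g.\ closed half-circles slightly enlarged). For each $i\in\{+,-\}$, replace $\eta$ by a simple smooth arc $\eta_i\subset U_p$ that traverses $\alpha_i$ and connects its endpoints back to the endpoints of $\eta$, while remaining disjoint from $\alpha_{-i}$. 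The resulting simple closed curve $\gamma_i := (\gamma_\beta \setminus \eta) \cup \eta_i$ is isotopic to $\gamma_\beta$, since $U_p$ is simply connected, so $\gamma_i \in [\beta]$. The surgery is local and can be carried out so that the $\gamma_i$ are uniformly bilipschitz equivalent to $\gamma_\beta$ as $C$ varies over small circles in a fixed compact subset of $U_p$; this makes $\gamma_i$ a $K_0$-quasicircle with $K_0$ independent of $C$, via the bilipschitz-to-quasiconformal extension principle underlying Theorem~\ref{thm-bilip-extension} combined with a smooth extension outside the surgery region.

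By weak coarse order preservation applied to the class $[\beta]$, each $\varphi(\gamma_i)$ is a $K'$-quasicircle with $K'=K'(K_0,[\beta])$ independent of $C$. Hence $\varphi(\alpha_i)$ is a sub-arc of a $K'$-quasicircle, and consequently is a quasi-arc with uniformly controlled four-point bounded turning constants. Since $\alpha_+$ and $\alpha_-$ cover $C$, any two points on $\varphi(C)$ are joined inside $\varphi(\alpha_+)$ or inside $\varphi(\alpha_-)$ by a uniformly well-behaved quasi-arc; a standard argument then shows that $\varphi(C)$ satisfies the two-point metric bounded turning condition with a constant depending only on $K'$, so $\varphi(C)$ is a $K$-quasicircle for some uniform $K$. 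Applying Aseev's theorem in local conformal charts — and noting that it does not require orientation preservation — then yields $\varphi \in \mathcal{Q}(S)$.

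The main obstacle is the uniform bilipschitz control of the surgery: the replacement arcs $\eta_i$ must be simple, must stay in $U_p$, must incorporate $\alpha_i$, and must have geometry controlled uniformly as $C$ varies over a compact region of the chart. The ingredients making this possible are compactness of the chart, simple-connectedness of $U_p$, and smoothness of $\gamma_\beta$ away from the surgery. The verification that a Jordan curve covered by two uniformly nice quasi-arcs is itself a quasicircle of controlled quality is a routine application of the metric bounded turning characterization recalled in \S\ref{s-qcbasics}.
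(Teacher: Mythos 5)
Your overall strategy is essentially the paper's: show that $\varphi$ sends small circles to quasicircles with uniform constants by embedding sub-arcs of the circles into essential quasicircles in controlled isotopy classes, invoke weak coarse order preservation, and finish with Aseev's theorem. However, there is a genuine gap in the step where you pass from control on $\varphi(\alpha_\pm)$ to control on $\varphi(C)$. Since $\alpha_+$ and $\alpha_-$ cover $C$, their open complements $I_\pm = C \setminus \alpha_\pm$ are disjoint nonempty open arcs. Choose $x$ in the interior of $I_+$ and $y$ in the interior of $I_-$. Then \emph{both} sub-arcs of $C$ from $x$ to $y$ contain a point of $I_+$ (namely $x$) and a point of $I_-$ (namely $y$), so neither sub-arc is contained in $\alpha_+$ or in $\alpha_-$. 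For such pairs your argument gives no control whatsoever on either component of $\varphi(C) \setminus \{\varphi(x), \varphi(y)\}$, so the asserted ``standard argument'' does not produce the two-point metric bounded turning condition for $\varphi(C)$, and with exactly two covering arcs it cannot: the existence of such a bad pair is forced by the fact that the two complementary gaps are disjoint.

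The paper avoids this by working with the four-point bounded turning (cross-ratio) condition and choosing, \emph{for each circularly ordered quadruple} $q$ on $C$, a $(3/4)$-circle arc $\mathcal{A}_q$ containing $q$ — such an arc always exists because any four points on a circle leave some gap of angular length at least $\pi/2$. The arc (and the essential quasicircle it gets embedded into, via Lemma~\ref{lemma-stick-arc-in-QC}) is adapted to the quadruple rather than fixed in advance, and lies in one of finitely many isotopy classes $F$, so weak coarse order preservation still gives a uniform constant. The use of the cross-ratio is also deliberate: it is M\"obius-invariant and therefore lifts cleanly to $\CP^1$ after passing to the universal cover, whereas your two-point metric formulation would additionally require a Euclidean--hyperbolic metric comparison (a minor but real extra step). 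Your approach can be repaired either by choosing a covering arc adapted to each pair of points (as the paper does with quadruples), or by replacing the two arcs with at least three, each of angular length strictly greater than $2\pi/3$; as written, though, the two-arc version fails.
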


The strategy to prove Proposition~\ref{prop-is-quasiconf} follows similar lines to other characterizations of automorphisms of fine curve graphs \cite{booth2024automorphisms,long2023automorphisms}.
In \S\ref{Section:TopologyToGraphs}, we  identify graph structures that uniquely determine inessential quasicircles in the surface.
Then in \S\ref{Section:BuildQCFromGraph}, we use this graph structure to define a map from $\Aut \qcg (S)$ to automorphisms of an extended graph that also includes vertices for inessential quasicircles.
From there, we use inessential quasicircles to identify points and build the desired map on the surface.

We conclude by remarking that for punctured surfaces the analogue of Theorem \ref{thm-first-precise} fails without the assumption of weak coarse order preservation:

\begin{example}\label{ex-counter-ex}
    Let $\varphi: (0,1) \to (0,1)$ be an orientation-preserving homeomorphism and let $F : (\bbC - \{0\}) \to (\bbD - \{0\})$ be given by $(r,\theta) \mapsto (\varphi(r),\theta)$.
    Then every quasicircle in $\bbC -\{0\}$ is mapped by $\varphi$ to a quasicircle from smoothness of $F$.
    
    On the other hand, the annulus $A_r$ with inner radius $r$ and outer radius $1$ has modulus $-\log r$ and $F(A_r)$ has modulus $-\log \varphi(r)$.
    So for $\varphi$ with sufficiently quickly decaying derivative at $0$, the map $F$ unboundedly distorts the moduli of the annuli $A_r$ for small $r$ and so is not quasiconformal.
\end{example}

\section{Encoding Topological Data Into Graphs}\label{Section:TopologyToGraphs}
In this section, we find graph structures that we can use to identify inessential quasicircles in our surface. We start in \S\ref{Subsect:torus} by identifying pairs of quasicircles with simple intersections, called torus pairs. Then in \S\ref{Subsect:bigon}, we use torus pairs to identify a construction called bigon pairs that determine a unique inessential quasicircle.
Next in \S\ref{Subsect:sharing}, we define and prove a couple results about sharing pairs, which are pairs of bigon pairs that share the same inessential quasicircle.
Finally, we prove that associated arc graphs are connected in \S\ref{Subsect:ArcGraphs}, which shows that these inessential quasicircles are well-defined under automorphisms of the graph.

In this section, $S$ is a closed oriented surface of genus $g \geq 2$ unless otherwise specified.

\subsection{Torus pairs}\label{Subsect:torus}
In this subsection, we show that automorphisms of $\qcg(S)$ preserve the sets of nondegenerate transverse torus pairs, nondegenerate nontransverse torus pairs, and degenerate torus pairs. Additionally, the graph differentiates between these three different types of torus pairs.

We start with relevant definitions. A \textit{multiquasicircle} is a finite collection of pairwise disjoint essential  quasicircles in $S$.
This implies that any multiquasicircle forms a finite clique, i.e.~ a complete graph, in $\qcg(S)$. 
A multiquasicircle in $S$ is \textit{separating} if its complement has more than one component. 
Two quasicircles $a$ and $b$ \textit{lie on the same side} of a separating multiquasicircle $m$ if they are disjoint from $m$ and lie in the same complementary component.
We say that a graph is a \textit{join} if we can partition the set of vertices into two or more nonempty sets in such a way that every vertex from one set is connected by an edge to every vertex in the other sets. 
Also, the \textit{link} of a set $A$ of vertices in a graph is the subgraph spanned by the set of vertices that are not in $A$ and are connected by an edge to each vertex in $A$.

In the case of the fine curve graph, a curve is separating if and only if the link of the curve is a join. Similarly, we have the following:

\begin{lemma}\label{Lem:SeparatingLinkJoin}
     Let $S$ be an oriented surface. Let $a \in \qcg(S)$. Then $a$ is separating if and only
    if $\mathrm{link}(a)$ is a join.
    Similarly, if $ a = \{a_1, \ldots, a_n\} \in \qcg(S)$ is a multiquasicircle, then $a$ is separating if and only
    if $\mathrm{link}(a)$ is a join.
\end{lemma}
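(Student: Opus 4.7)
The plan is to prove both directions via a partition of $\mathrm{link}(a)$ using the connected components of $S \setminus a$ in the single-quasicircle case, and of $S \setminus (a_1 \cup \cdots \cup a_n)$ in the multiquasicircle case. I will write out the single-quasicircle argument; the multiquasicircle version is verbatim after replacing $S \setminus a$ with $S \setminus (a_1 \cup \cdots \cup a_n)$.

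Forward direction: suppose $a$ is separating and let $S_1, \ldots, S_k$, with $k \geq 2$, denote the components of $S \setminus a$. Each $S_i$ contains an essential quasicircle of $S$ — for instance, a smooth simple closed curve in $S_i$ parallel to a boundary component of $S_i$, which is isotopic in $S$ to (a component of) $a$ and is therefore essential in $S$ and a quasicircle. Define $V_i \subseteq \mathrm{link}(a)$ to be the set of essential quasicircles contained in $S_i$. Each $V_i$ is nonempty, the $V_i$ cover $\mathrm{link}(a)$ since every quasicircle disjoint from $a$ lies in some component of $S \setminus a$, and any two quasicircles lying in distinct $V_i, V_j$ are automatically disjoint. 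Thus $\mathrm{link}(a) = V_1 \sqcup \cdots \sqcup V_k$ exhibits $\mathrm{link}(a)$ as a join.

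Converse: suppose $a$ is nonseparating, so $S \setminus a$ is connected with positive genus (using $g \geq 2$ and that $a$ is essential). Assume for contradiction that $\mathrm{link}(a) = V_1 \sqcup V_2$ is a join with both parts nonempty, and pick $b \in V_1, c \in V_2$; by the join condition $b \cap c = \emptyset$. It suffices to produce a smooth essential simple closed curve $d \subset S \setminus a$ with $d \cap b \neq \emptyset$ and $d \cap c \neq \emptyset$: since smooth simple closed curves are quasicircles, we would have $d \in \mathrm{link}(a)$, and so $d$ would lie in some $V_i$ yet intersect a vertex of the opposite $V_j$, contradicting the join property. To construct $d$, choose points $p \in b$ and $q \in c$ and two disjoint embedded arcs $\gamma, \gamma' \subset S \setminus a$ from $p$ to $q$ meeting $b \cup c$ only at their endpoints. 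Routing $\gamma'$ through a nontrivial handle of $S \setminus a$ (available because $S \setminus a$ has positive genus) ensures that $\gamma \cup \gamma'$ is not null-homotopic in $S$, and smoothing the corners at $p, q$ yields the desired $d$.

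The main obstacle is the construction of $d$ with the correct intersection pattern and essentiality; this is a standard surface-topology fact, with the only delicate point being that essentiality of $d$ in $S$ is arranged by the positive-genus routing. For the multiquasicircle case, the only additional input is that $S \setminus (a_1 \cup \cdots \cup a_n)$ is disconnected precisely when the multiquasicircle is separating, and that each component still contains an essential quasicircle via a smooth boundary-parallel curve isotopic in $S$ to one of the $a_i$; these observations, together with the fact that a nonseparating multiquasicircle likewise has a positive-genus connected complement in $S$, let the rest of the proof go through unchanged.
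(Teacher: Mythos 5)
The paper states this lemma without proof, deferring implicitly to the analogous statement for the fine curve graph in \cite{long2023automorphisms}; so your proposal is being judged on its own merits rather than against an explicit argument in the paper. Your forward direction (separating $\Rightarrow$ join) is correct and is the standard partition-by-components argument: each complementary component contains a boundary-parallel smooth curve, which is an essential quasicircle since it is isotopic in $S$ to one of the $a_i$, and vertices in distinct components are automatically disjoint.

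The converse, however, contains a genuine gap in the multiquasicircle case. You assert that ``a nonseparating multiquasicircle likewise has a positive-genus connected complement in $S$,'' and your construction of the curve $d$ leans on this by routing $\gamma'$ through a handle. But this claim is false: on $S_2$, two disjoint ``handle'' curves $a_1, a_2$ with $[a_1]+[a_2]$ nonseparating cut $S_2$ into a four-holed sphere, which is connected but planar. More generally, cutting $S_g$ along $g$ suitable disjoint nonseparating curves yields a $2g$-holed sphere. In these cases there is no handle to route through, and your argument for essentiality of $d$ collapses. (A secondary weakness, even in the positive-genus case: routing only $\gamma'$ through a handle does not by itself show $\gamma \cup \gamma'$ is essential, since $\gamma$ could also wind through the same handle.) The fix is to replace the handle argument by an intersection-number argument: if you arrange for the two arcs to leave $p$ on opposite sides of $b$ and to be otherwise disjoint from $b$, then $d = \gamma \cup \gamma'$ crosses $b$ transversally exactly once, so $\hat{i}(d,b) = 1 \pmod 2$; an essential curve $d$ disjoint from the essential multicurve $a$ with odd intersection number with a curve in $S \setminus a$ cannot bound a disk, hence is essential in $S$, with no genus hypothesis needed. (Alternatively, one can prove connectivity of the ``nondisjointness'' graph of essential quasicircles in the connected complement $S \setminus a$ and run a chain argument $b = d_0, d_1, \ldots, d_k = c$ with consecutive $d_i$'s intersecting; the join hypothesis then forces all $d_i$ into the same part, a contradiction.) As written, though, the converse does not go through for general nonseparating multiquasicircles.
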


In~\cite{long2023automorphisms}, it is shown that automorphisms of the fine curve graph preserve the set of separating curves, the set of separating multicurves, and the sides of a separating multicurve.
The result below immediately follows from Lemma~\ref{Lem:SeparatingLinkJoin}.

\begin{lemma}\label{Lem:MulticurvesPreserves}
    Let $\Phi \in \Aut \qcg(S)$. Then $\Phi$ preserves the set
of separating quasicircles in $\qcg(S)$ and also preserves the set of separating multiquasicircles in $\qcg(S)$.
Moreover, $\Phi$ preserves the sides of a separating multiquasicircle, that is, $a$ and $b$ lie on the same
side of $m$ if and only if $\Phi(a)$ and $\Phi(b)$ lie on the same side of $\Phi(m)$.
\end{lemma}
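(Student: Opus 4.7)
The plan is to invoke Lemma~\ref{Lem:SeparatingLinkJoin} together with two tautological facts about graph automorphisms: any $\Phi \in \Aut \qcg(S)$ sends the link of a vertex or clique to the link of its image, and $\Phi$ preserves the purely graph-theoretic property of being a join. Combining these with Lemma~\ref{Lem:SeparatingLinkJoin} gives: $a$ is separating iff $\operatorname{link}(a)$ is a join, iff $\operatorname{link}(\Phi(a))$ is a join, iff $\Phi(a)$ is separating. For the multiquasicircle case one additionally needs that $\Phi$ preserves the collection of multiquasicircles, which follows because multiquasicircles are precisely the finite cliques in $\qcg(S)$, and $\Phi$ preserves cliques; the same link/join argument applied to the multiquasicircle version of Lemma~\ref{Lem:SeparatingLinkJoin} then finishes this portion.

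For the sides statement, the strategy is to characterize ``lying on the same side of $m$'' in purely graph-theoretic terms within $\operatorname{link}(m)$. The key claim is that for $a, b \in \operatorname{link}(m)$, the quasicircles $a$ and $b$ lie in a common complementary component of $m$ if and only if there exists $c \in \operatorname{link}(m)$ that is non-adjacent to both $a$ and $b$ in $\qcg(S)$. The ``if'' direction is immediate: any such $c$, being disjoint from $m$, lies in a single complementary component, and intersecting $a$ forces $a$ to lie in the same component, and likewise for $b$. The ``only if'' direction requires producing such a $c$ in the common complementary component $U$ of $a$ and $b$; since $g \geq 2$ ensures each complementary component has enough topology to support a simple closed curve meeting any two given essential quasicircles in $U$, and smooth curves are quasicircles, this is straightforward.

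Once the characterization is in place, the preservation of sides under $\Phi$ is automatic: the relation is defined purely in terms of adjacency in $\qcg(S)$ and the distinguished clique $m$, both of which $\Phi$ respects. The main obstacle, such as it is, lies in the ``only if'' construction above, which requires a short topological argument ruling out degenerate components $U$ and producing a quasicircle with the required intersections; this parallels the analogous step for the fine curve graph in~\cite{long2023automorphisms} and is not a serious difficulty, which is why the statement can reasonably be advertised as an immediate consequence of Lemma~\ref{Lem:SeparatingLinkJoin}.
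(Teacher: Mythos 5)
Your proposal is correct and follows essentially the same route the paper intends: the paper dispenses with the proof by saying it ``immediately follows'' from Lemma~\ref{Lem:SeparatingLinkJoin}, but that phrase is doing nontrivial work for the ``sides'' assertion, and your reconstruction supplies the missing detail. The first two claims are exactly as you say: automorphisms preserve links, joins, and cliques, and multiquasicircles are precisely the finite cliques, so Lemma~\ref{Lem:SeparatingLinkJoin} gives preservation of separating (multi)quasicircles directly.

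For the sides, your diameter-$2$ characterization (same side iff some $c \in \operatorname{link}(m)$ crosses both $a$ and $b$) is correct and automorphism-invariant, so it works. A small remark: the cleaner and more standard packaging, which is what the Long--Margalit--Pham--Verberne--Yao argument the paper cites actually uses, is that the sides of $m$ are precisely the connected components of the complement graph of $\operatorname{link}(m)$ (equivalently, the maximal parts in the finest join decomposition), and graph automorphisms automatically preserve these components. That formulation avoids having to verify your ``only if'' step that the intersection graph on each side has diameter at most $2$; one only needs connectivity, which is easier. Your ``only if'' argument is nonetheless fine: in each complementary component one can always find a smooth, hence quasicircle, essential curve meeting two given ones, though strictly speaking the relevant topology lives in the complementary component rather than coming from $g \ge 2$ globally. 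Either route is sound; the component-of-the-complement-graph version is the one to prefer if you want to match what ``immediately follows'' most honestly means here.
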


\begin{definition}[Hull of quasicircles]
    The {\rm{hull}} in $S$ of a collection of quasicircles $\{\gamma_i\}$ is the union of the quasicircles
    along with any components of the complement of $\cup \gamma_i$ that are
    topologically embedded disks.
\end{definition}

The following lemma implies that hulls are preserved by automorphisms of $\qcg(S)$.
The proof of this lemma follows the proof of Lemma 2.4 from \cite{long2023automorphisms}, but considering quasicircles in place of curves in surfaces.

\begin{lemma}\label{Lem:AutomorphismsHullPreserved}
    Let $\Phi \in \Aut\qcg(S)$. If $A$ is a finite set of vertices of $\qcg(S)$ and a vertex $d$ lies in the hull of $A$, then $\Phi(d)$ lies in the hull of $\Phi(A)$.
\end{lemma}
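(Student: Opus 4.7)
The plan is to give a purely combinatorial characterization of ``$d$ lies in the hull of $A$'' using only the edge relation of $\qcg(S)$; invariance under $\Phi$ is then automatic. This follows the outline of \cite[Lemma 2.4]{long2023automorphisms} for the fine curve graph, and the main adaptation is that every certificate curve produced must be realizable as an essential quasicircle.

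Concretely, the characterization I would establish is: for a finite $A \subset \qcg(S)$ and a vertex $d \in \qcg(S)$, $d$ lies in the hull of $A$ if and only if either $d \in A$, or every $e \in \mathrm{link}(A)$ with $e \neq d$ is adjacent to $d$ in $\qcg(S)$. The forward direction is essentially immediate: any such $e$ is an essential quasicircle contained in some component $C$ of $S \setminus \cup A$, and since $C$ contains an essential curve it is not a disk and so lies outside the hull; as $d \subset \mathrm{hull}(A)$, we have $d \cap C = \emptyset$ and hence $d$ is disjoint from $e$. For the converse, I would argue the contrapositive: if $d \notin A$ does not lie in the hull, then $d$ enters some non-disk component $C$ of $S \setminus \cup A$, and one constructs an essential quasicircle $e \subset C$, $e \neq d$, that intersects $d$; such $e$ lies in $\mathrm{link}(A)$ but is not adjacent to $d$, contradicting the hypothesis.

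The main obstacle will be producing the certificate $e$, particularly when $C$ has low complexity. When $d$ is disjoint from $\cup A$, so $d \subset C$, one takes a small quasicircle perturbation of $d$ within $C$ that meets $d$ transversely. When $d$ meets $\cup A$, one fixes an arc $\alpha \subset d \cap C$ with endpoints on $\partial C$ and produces an essential quasicircle crossing $\alpha$. For $C$ of sufficient complexity (positive genus or at least three boundary components), a simple closed geodesic for a hyperbolic structure on $C$ can be arranged to cross $\alpha$, and by Remark~\ref{remark-def-well-behaved} such a geodesic is automatically a quasicircle in $S$. The delicate case is when $C$ is an annular component and $\alpha$ has both endpoints on the same boundary component of $C$; here one builds a simple closed curve that enters the disk region cut off by $\alpha$, wraps once around the core of $C$, and exits $\alpha$ again, producing an essential curve in $C$ with two transverse intersections with $\alpha$, which can be smoothed to a quasicircle. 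With these constructions the characterization is phrased entirely via edges of $\qcg(S)$, and hence is preserved by $\Phi$, yielding $\Phi(d) \in \mathrm{hull}(\Phi(A))$.
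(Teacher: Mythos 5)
Your combinatorial characterization of ``$d$ lies in the hull of $A$'' via $\mathrm{link}(A)$ is correct, and the overall strategy---adapt \cite[Lemma~2.4]{long2023automorphisms} by producing quasicircle certificates---is exactly what the paper does. The forward direction of your characterization is fine as written.

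The gap is in the certificate construction for the converse direction. Having fixed an arc $\alpha \subset d \cap C$, the claim that a simple closed geodesic for an auxiliary hyperbolic structure ``can be arranged to cross $\alpha$'' fails when $\overline{\alpha}$ cuts off a disk from $\overline{C}$: in that case every essential free homotopy class in $C$ has a representative disjoint from that disk, and a geodesic for any reasonable auxiliary metric will in general avoid it. You only treat this inessential-arc situation when $C$ is an annulus, but it occurs in components of any complexity---for instance when $d$ is close to a pushoff of an $A$-curve with a small tongue dipping into $C$ and back, in which case \emph{every} arc of $d \cap C$ may be inessential. The uniform fix is essentially the construction you already describe for annuli: take any essential smooth embedded curve $e_0 \subset C$ (such curves exist since $C$ is not a disk, are essential in $S$ because the curves of $A$ are essential and miss $C$, and are automatically quasicircles, being smooth), and finger $e_0$ across $d$ near a point $p \in d \cap C$ by a smooth connect-sum with a small circle about $p$. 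The result is a smooth curve in $C$, isotopic to $e_0$, that meets $d$, and this handles all cases at once. Two smaller points: the open component $C$ need not admit a hyperbolic structure with geodesic boundary in any clean way since curves in $A$ may intersect one another, so arguing directly with smooth embedded curves is cleaner; and Remark~\ref{remark-def-well-behaved} is not the right reference for why such a geodesic is a quasicircle of $S$---it is not a geodesic of $(S,g)$, and what matters is simply that it is a smooth embedded curve in $S$, which is a quasicircle by the differential criterion for quasiconformality.
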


\begin{definition}[Torus pair]
    Let $c,d \in \qcg(S)$. The pair $\{c,d\}$ is a {\rm{torus pair}} if $c \cap d$ is a single interval and $c$ and $d$ cross at that interval. A torus pair is said to be {\rm{degenerate}} if $c \cap d$ is a single point, and {\rm{nondegenerate}} otherwise.
\end{definition}

\begin{definition}[Nondegenerate transverse torus pair]
    A nondegenerate torus pair $\{a,b\}$ is said to be transverse if and only if there exists a quasicircle $c$ in the hull which does not equal either $a$ or $b$.
    A nondegenerate torus pair is said to be nontransverse otherwise.
\end{definition}

\begin{figure}[h]
    \includegraphics[width=2in]{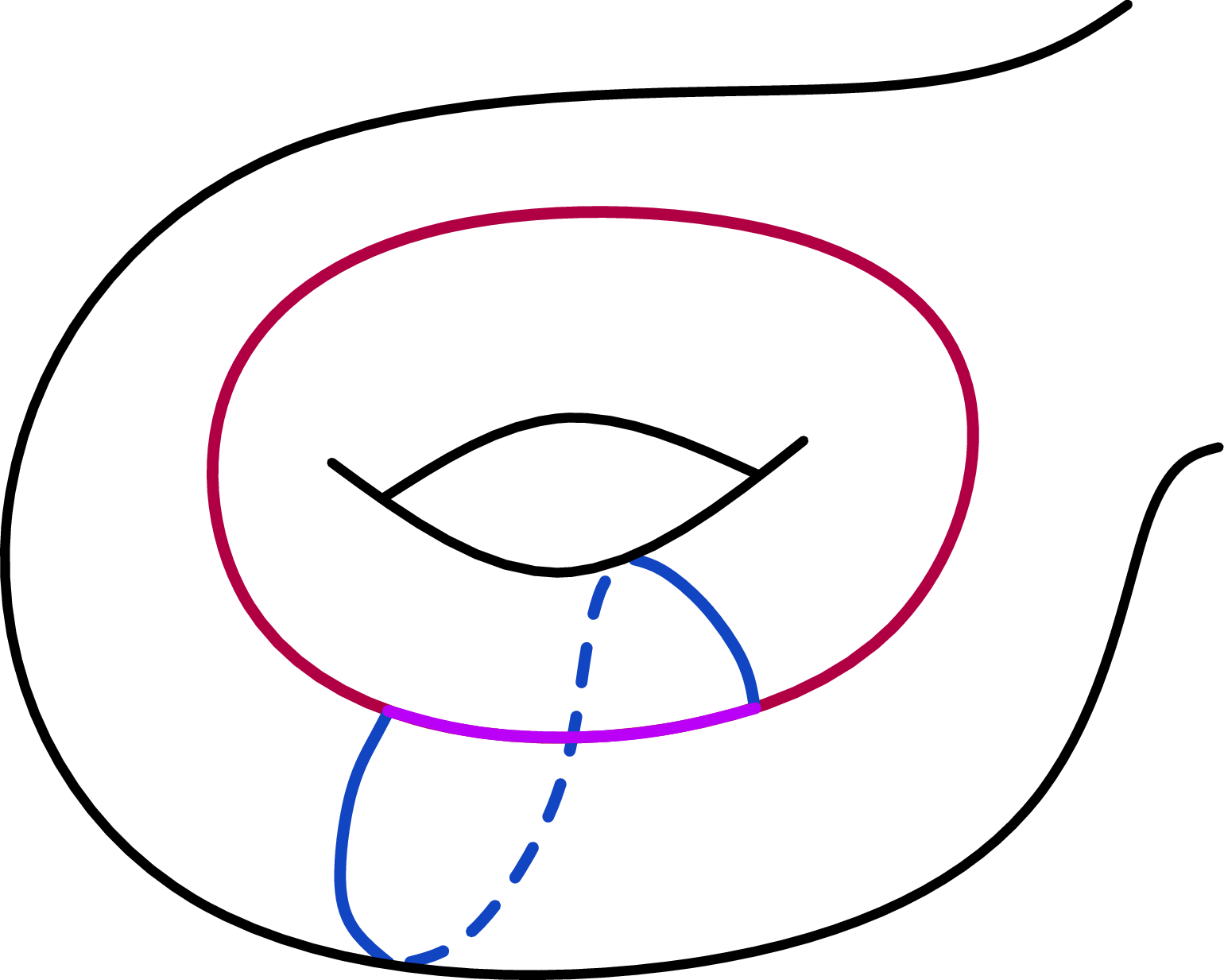}
    \hspace{.5in}
    \includegraphics[width=2in]{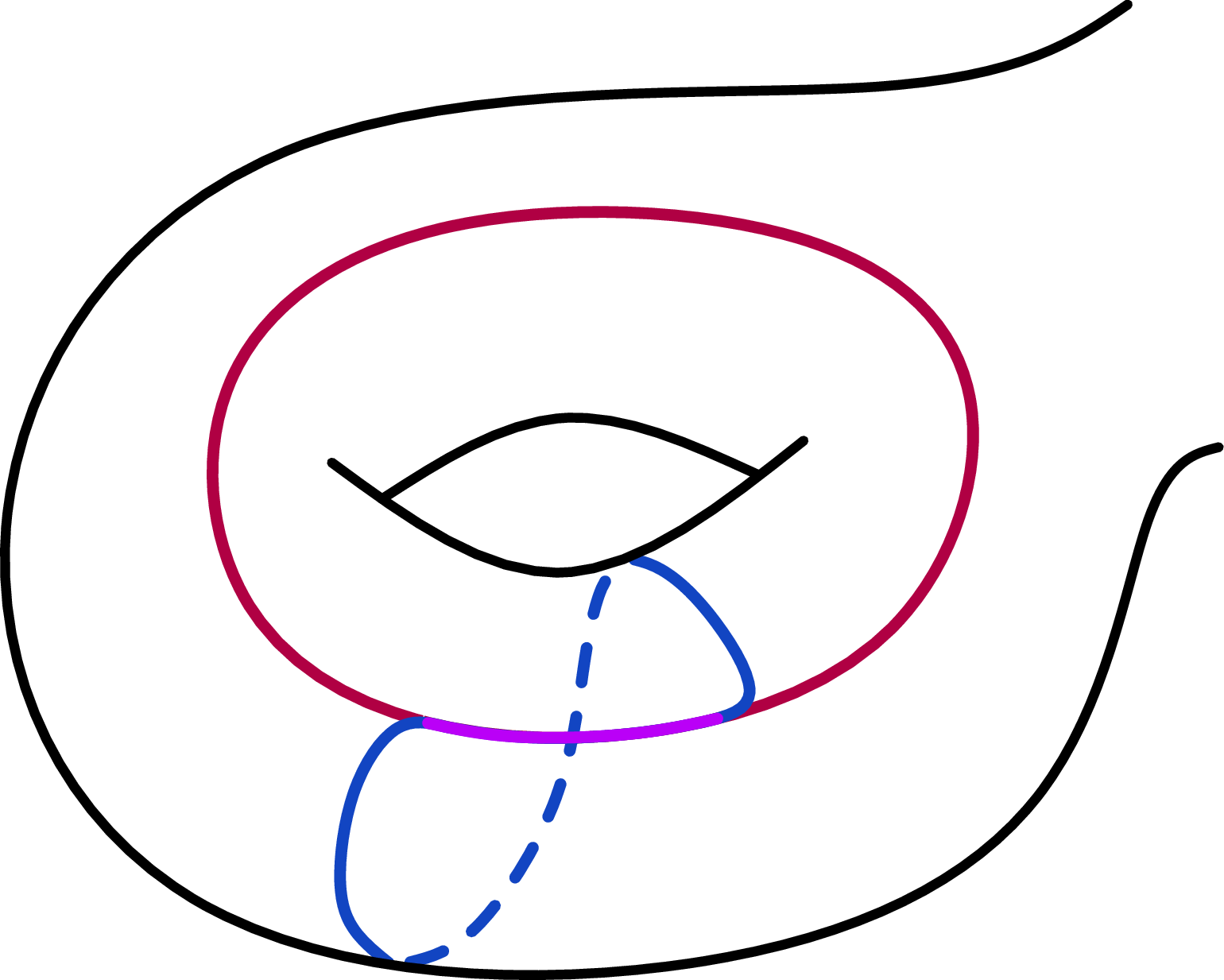}
    \caption{Two nondegenerate torus pairs are formed by the above blue and red curves. The intersections between the curves in each torus pair are drawn in purple.
    The pair on the left is transverse, and the third quasicircle is the symmetric difference between the curves.
    The pair on the right is nontransverse due to the cusps in the symmetric difference.}
\end{figure}

The following lemma allows us to distinguish when a pair of vertices corresponds to a torus pair in $\qcg(S)$.
We note that the forward direction follows by the definition of torus pairs, while the backward direction follows directly by replacing the $C^k$-curves in work of the first named author \cite{booth2024automorphisms} with quasicircles and applying \cite[Lemma 3.14]{booth2024automorphisms}.

\begin{lemma}\label{Lem:TorusPairClassification}
    Let $a, b$ be nonseparating, nonhomotopic quasicircles.
    Then $\{a,b\}$ is a torus pair if and only if the hull of $\{a,b\}$ contains at most one other vertex of  $\qcg(S)$ and there exists a separating quasicircle $\gamma \in \qcg(S)$ disjoint from $a$ and $b$ such that one side of $\gamma$ is a one-holed torus subsurface that contains $a$ and $b$.
\end{lemma}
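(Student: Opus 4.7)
The plan is to treat the two directions asymmetrically: the forward direction follows from the definition of a torus pair together with standard topological constructions, while the backward direction carries the real combinatorial content and is handled by importing the argument of \cite[Lemma 3.14]{booth2024automorphisms}, with quasicircles in place of $C^k$-curves.

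For the forward implication, suppose $\{a,b\}$ is a torus pair. To produce $\gamma$, I would take a topological regular neighborhood $N$ of $a\cup b$ in $S$; since $a\cap b$ is a single point or arc of crossing, $N$ is a one-holed torus containing $a$ and $b$ as nonhomotopic essential curves. A smooth simple closed curve $\gamma$ in the annular collar outside $N$ is then automatically a quasicircle by Remark \ref{remark-def-well-behaved}, is disjoint from $a\cup b$, and cuts off a one-holed torus side containing both curves. The hull bound is proved by case analysis of the three types of torus pair: in the degenerate case the hull is the wedge $a\cup b$ itself and contains no essential simple closed curve other than $a$ and $b$; in the nondegenerate transverse case exactly one bigon-like disk appears in the complement of $a\cup b$ and its boundary, the symmetric difference of $a$ and $b$, is the lone additional quasicircle vertex; in the nondegenerate nontransverse case the same topological bigon appears, but its boundary acquires cusps at the endpoints of the shared arc that prevent it from satisfying the metric bounded turning condition, so no additional vertex of $\qcg(S)$ lies in the hull.

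For the backward direction, I would mimic the proof of \cite[Lemma 3.14]{booth2024automorphisms}. The hypotheses supply a one-holed torus subsurface $T\subset S$ containing the nonhomotopic essential curves $a$ and $b$. In a one-holed torus, a systematic analysis of intersection patterns shows that whenever $a$ and $b$ meet in a configuration strictly more complicated than a single crossing interval, the complement $T\setminus(a\cup b)$ has enough topological disk components to yield two or more distinct essential quasicircles in the hull, contradicting the hypothesis. The intersection must therefore consist of a single point or a single crossing arc, which is precisely the definition of a torus pair.

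The main obstacle I anticipate is producing bona fide vertices of $\qcg(S)$ from the boundaries of these disk complementary regions, since each such boundary is a priori only a concatenation of quasicircle arcs of $a$ and $b$ meeting at intersection points, and may therefore fail to be a quasicircle at its corners. The strategy is to smooth each such boundary by a small isotopy supported in the interior of its disk region and to invoke Lemma \ref{lemma-bt-localizes} together with Remark \ref{remark-def-well-behaved} to upgrade the resulting smooth simple closed curve to a quasicircle lying in the hull. Once this regularity step is in hand, both the enumeration in the forward direction and the counting obstruction in the backward direction reduce to the purely topological content of \cite[Lemma 3.14]{booth2024automorphisms}.
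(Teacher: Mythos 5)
Your overall plan matches the paper's: the forward direction is by direct topological inspection plus the definitions, and the backward direction is delegated to \cite[Lemma 3.14]{booth2024automorphisms} with $C^k$-curves swapped for quasicircles, modulo a regularity step. Your instinct to flag the regularity of the surgered curves is also the right concern, and Lemma~\ref{lemma-bt-localizes} is a sensible tool for it.

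However, there is a genuine topological error in your description of the hull, and it appears in both directions. You assert that for a nondegenerate transverse torus pair ``exactly one bigon-like disk appears in the complement of $a\cup b$ and its boundary, the symmetric difference of $a$ and $b$, is the lone additional quasicircle vertex.'' This is internally contradictory: a curve that bounds a disk is inessential and therefore is \emph{not} a vertex of $\qcg(S)$ at all. It is also false that the complement contains a disk. For a nondegenerate torus pair, $a\cup b$ is a theta graph (two vertices, three edges), and its complement in $S$ is connected with negative Euler characteristic; you can see this by cutting $S$ first along $a$ and then along the arc $b\setminus a$, which runs from one copy of $a$ to the other because $a$ and $b$ cross along the shared interval. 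So the hull of $\{a,b\}$ is exactly $a\cup b$, with no disk components adjoined, and the reason the hull contains at most one vertex besides $a$ and $b$ is simply that a theta graph carries exactly three embedded cycles, namely $a$, $b$, and $c=\overline{a\,\Delta\, b}$, the last of which is essential (its homology class is $[a]\pm[b]\neq 0$) and is a quasicircle precisely in the transverse case. The same confusion recurs in your backward direction: the ``boundaries of these disk complementary regions'' that you propose to upgrade to vertices are all inessential and so cannot provide the additional vertices of $\qcg(S)$ that the counting argument requires. The extra vertices in that direction must instead be \emph{essential} cycles in the graph $a\cup b$ (pushed slightly into adjacent disk regions where needed for embeddedness or regularity). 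With that correction, the regularity step still needs care: the segments of those cycles running along $a$ or $b$ are only quasicircle arcs, not smooth, so ``smoothing by a small isotopy'' is not quite the right operation; one must verify bounded turning across the transition points where the constructed curve leaves $a$ or $b$ and enters a disk interior, which is where Lemma~\ref{lemma-bt-localizes} actually has to earn its keep.
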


Lemma \ref{Lem:TorusPairClassification} together with the definition of nondegenerate transverse torus pairs immediately provides us with a graph theoretical interpretation of when two quasicircles form a nondegenerate transverse torus pair.

\begin{lemma}\label{Lem:NonDegenNonTransTorusPairClassification}
    Let $a, b \in \qcg(S)$.
    Then $\{a,b\}$ is a nondegenerate transverse torus pair if and only if the hull of $\{a,b\}$ contains exactly three vertices of  $\qcg(S)$ and there exists a separating quasicircle $\gamma \in \qcg(S)$ disjoint from $a$ and $b$ such that one side of $\gamma$ is a one-holed torus subsurface which contains $a$ and $b$.
\end{lemma}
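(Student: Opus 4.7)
The plan is to prove the biconditional by invoking Lemma \ref{Lem:TorusPairClassification} in each direction and then unpacking the definition of nondegenerate transverse, supplementing with a brief topological accounting of what a degenerate torus pair's hull can contain.

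For the forward direction, I would start with a nondegenerate transverse torus pair $\{a,b\}$. The torus pair hypothesis together with the forward implication of Lemma \ref{Lem:TorusPairClassification} yields that the hull of $\{a,b\}$ contains at most one vertex beyond $a$ and $b$ and that the separating quasicircle $\gamma$ with a one-holed torus side exists. The definition of transversality then supplies at least one additional vertex in the hull, upgrading ``at most one'' to ``exactly one,'' so the hull contains exactly three vertices.

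For the backward direction, assume the hull has exactly three vertices and $\gamma$ is as stated. In particular, the hull contains at most one vertex other than $a$ and $b$, so the converse direction of Lemma \ref{Lem:TorusPairClassification} implies that $\{a,b\}$ is a torus pair. It remains to show the pair is nondegenerate, at which point transversality follows from the presence of a third hull vertex by definition. If $\{a,b\}$ were degenerate, then $a \cup b$ would be a wedge of two circles embedded in the one-holed torus side of $\gamma$, and a short Euler characteristic computation in the one-holed torus shows that the complement of $a \cup b$ in $S$ consists of an annular piece inside the one-holed torus and a higher-genus surface-with-boundary on the other side of $\gamma$, neither of which is a disk. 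Hence the hull equals $a \cup b$ in the degenerate case, and the only embedded simple closed curves in a figure-eight are the two loops, so the hull would have only two vertices. This contradicts the hypothesis, forcing nondegeneracy.

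The main obstacle, such as it is, is the short topological argument ruling out the degenerate case by computing that the degenerate hull has exactly two vertices; the rest is a direct assembly of Lemma \ref{Lem:TorusPairClassification} with the definitions of degeneracy and transversality.
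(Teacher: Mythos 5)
Your proposal is correct and takes the same approach the paper intends: the paper simply states that Lemma \ref{Lem:TorusPairClassification} together with the definition of nondegenerate transverse torus pairs ``immediately provides'' the characterization, and your proof is a faithful elaboration of that, with the forward direction upgrading ``at most one'' to ``exactly one'' via transversality and the backward direction invoking the converse of Lemma \ref{Lem:TorusPairClassification} plus a degenerate-case exclusion. Two small points worth tightening: you should note (briefly) that in both directions the hypotheses of Lemma \ref{Lem:TorusPairClassification} --- nonseparating and nonhomotopic --- are satisfied, which in the forward direction follows because a single transverse crossing forces algebraic intersection number $\pm 1$; and in the degenerate case the complement of the figure-eight $a \cup b$ in $S$ is actually a single connected non-disk component (since $\gamma$ itself is not removed), not two pieces as you describe, though your conclusion that no complementary component is a disk, and hence the hull is exactly the figure-eight with only two vertices, is correct.
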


We now provide the graph structure which is unique to nondegenerate torus pairs.
We note that the proof from Lemma 3.15 of~\cite{booth2024automorphisms} for the same result holds by considering quasicircles in place of $C^k$-curves.

\begin{lemma}
   Let $\{a,b\}$ be a torus pair in $\qcg(S)$ whose interval of intersection is denoted $c$.
   Then $\{a,b\}$ is a nondegenerate torus pair if and only if there exist disjoint homotopic curves $d_1$ and $d_2$ in $\qcg(S)$ such that $\{a,d_1\}, \{a,d_2\},\{b,d_1\}$, and $\{b,d_2\}$ are all torus pairs and for any quasicircle $e \in \qcg(S)$ disjoint and non-homotopic to $d_1$ and $d_2$, then $e$ intersects $a$ if and only if $e$ intersects $b$.
\end{lemma}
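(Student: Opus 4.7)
The plan is to work in a one-holed torus subsurface $T \subset S$ containing $\{a,b\}$, whose existence is provided by Lemma~\ref{Lem:TorusPairClassification}. Write $c = a \cap b$ with endpoints $p,q$ (coinciding in the degenerate case), and decompose $a = c \cup a'$ and $b = c \cup b'$ where $a', b'$ are arcs joining $p$ to $q$, so that $a' \cup b'$ is an embedded circle in $T$ bounding a topological disk $B$.

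For the forward implication, assume $\{a,b\}$ is nondegenerate, so $c$ has positive length. Pick a short subinterval $c_0 \subset \operatorname{int}(c)$ and construct a smooth simple closed curve $d$ in $T$ that crosses $c$ transversely in a single arc inside $c_0$ and is otherwise disjoint from $a \cup b$; the one-holed torus topology of $T$ guarantees such $d$ exists in a suitable homotopy class. Smoothness and compactness of $T$ imply $d$ is a quasicircle. Take $d_1, d_2$ to be two disjoint smooth parallel push-offs of $d$ in a thin tubular neighborhood of $d$. Each $d_i$ is a quasicircle homotopic to $d$, and $d_i \cap a = d_i \cap b$ is a single subinterval of $c_0$, so $\{a,d_i\}$ and $\{b,d_i\}$ are torus pairs. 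For the condition on $e$: $d_1 \cup d_2$ bounds a closed annulus $N$ containing a short subarc of $c$, and any essential quasicircle $e$ disjoint from $d_1 \cup d_2$ and non-homotopic to them must lie in $S \setminus N$. Outside $N$, the curves $a$ and $b$ agree along all of $c$ minus the short middle piece cut off by $N$, and differ only along $a', b'$. A standard innermost-bigon argument applied to $e \cap B$ rules out $e$ meeting exactly one of $a', b'$, so together with the shared-arc symmetry one obtains $e \cap a \neq \emptyset$ iff $e \cap b \neq \emptyset$.

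For the converse, suppose $\{a,b\}$ is degenerate, so $a \cap b = \{p\}$. Given any candidate pair $d_1, d_2$, we build an essential quasicircle $e$ disjoint from $d_1 \cup d_2$, non-homotopic to them, and meeting $a$ but not $b$, which contradicts the iff condition. Since $a \setminus \{p\}$ and $b \setminus \{p\}$ are disjoint arcs, and $S$ has genus $\geq 2$, the rest of $S$ provides ample room to route a smooth essential simple closed curve $e$ that follows $a$ in a short transverse arc far from $p$ and from $d_1 \cup d_2$ while avoiding $b$ and being non-homotopic to the $d_i$. Smoothness makes $e$ a quasicircle, completing the contradiction.

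The main obstacle is the innermost-bigon step in the forward direction: showing that no essential quasicircle $e \subset S \setminus N$ can meet exactly one of $a'$ and $b'$. The essential topological input is that $d_1 \cup d_2$ seals off the short middle of $c$ from the rest of $T$, forcing the $a$-$b$ symmetry along $c$ to propagate across $B$; this sealing-off is available precisely when $c$ is a nondegenerate interval, which is why the equivalence fails in the degenerate case.
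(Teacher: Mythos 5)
Your forward direction rests on a false topological claim: for a torus pair $\{a,b\}$, the closure $a' \cup b'$ of the symmetric difference $a \Delta b$ does \emph{not} bound a disk. It is an essential curve. Indeed, the paper's own definitions make this explicit — for a nondegenerate transverse torus pair, $a' \cup b'$ is precisely the third quasicircle in the torus triple, and in general it represents $[a] \pm [b] \neq 0$ in $H_1(S)$ since $a$ and $b$ cross once. You appear to have imported the picture of a \emph{bigon pair} (homotopic, non-crossing curves whose symmetric difference \emph{does} bound a disk) into the torus-pair setting, where it does not hold. Since the disk $B$ does not exist, the ``innermost-bigon argument applied to $e \cap B$'' has nothing to act on, and the verification of the $e$-condition in the forward direction collapses.

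This is also not a purely local repair. Your choice of $d_1, d_2$ as thin parallel push-offs of a single curve $d$ crossing a short subarc $c_0 \subset \operatorname{int}(c)$ almost certainly fails the $e$-condition outright: the annulus $N$ between them swallows only a short piece of $c$, so outside $N$ the arcs $a \setminus N$ and $b \setminus N$ still differ by $a'$ versus $b'$, which lie in genuinely different positions in the genus-$(g-1)$ subsurface $S \setminus N$; one can then route an essential quasicircle $e \subset S \setminus N$, non-homotopic to the $d_i$, crossing $a'$ and avoiding $b$ altogether. A construction that does work is to take $N$ to be a thin regular neighborhood of $a \Delta b$ itself and $d_1, d_2 = \partial N$: then $a', b' \subset \operatorname{int}(N)$, the arc $c$ exits $N$ near $p$ through one boundary component and re-enters near $q$ through the other (this is exactly where the crossing condition is used), so each of $\{a,d_i\}$, $\{b,d_i\}$ is a torus pair and $a \setminus N = c \setminus N = b \setminus N$, making the $e$-condition automatic. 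Crucially, this uses that $a \Delta b$ is an embedded circle, which is \emph{equivalent} to $c$ having two distinct endpoints, i.e.\ to nondegeneracy; in the degenerate case $\overline{a \Delta b} = a \cup b$ is a wedge of circles and no such collar exists. The paper itself proves this lemma by citing Lemma 3.15 of \cite{booth2024automorphisms} with quasicircles substituted for $C^k$-curves, and you should check your argument against that one. Your converse direction is plausible in spirit, but ``ample room'' is not a proof: one must actually exhibit an essential quasicircle in $S \setminus (d_1 \cup d_2)$, in a class distinct from $[d_1]$, that meets exactly one of the two arcs $a \setminus N$, $b \setminus N$; this is short but not free.
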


Lemma 3.4 of \cite{booth2024automorphisms} holds for curves in general, so we know that for two disjoint quasicircles $a,b \in \qcg(S)$, $a$ and $b$ are homotopic if and only if $\mathrm{link}(a,b)$ is a join where one part contains only separating or only nonseparating curves.

We now show that two quasicircles $a,b \in \qcg(S)$ are homotopic if and only if there exists a finite path of disjoint homotopic curves in $\qcg(S)$.

\begin{lemma}\label{Lem:QuasiCirclesHomotopic}
    Let $a, b \in \qcg(S)$. Then $a$ and $b$ are homotopic if and only if there
    exists a finite path $a_1, ..., a_n$ of homotopic curves in $\qcg(S)$ with $a = a_1,$ $b = a_n$ and $a_i$ disjoint from $a_{i+1}$ for $i = 1, ..., n-1$.
\end{lemma}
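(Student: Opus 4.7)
The ``if'' direction is immediate from the transitivity of homotopy: each consecutive pair $a_i, a_{i+1}$ lies in the same isotopy class, so $a = a_1$ and $b = a_n$ are homotopic. For the ``only if'' direction, my strategy is to produce a single essential quasicircle $c$ in the common homotopy class of $a$ and $b$ satisfying $c \cap (a \cup b) = \emptyset$; the three-term path $a, c, b$ then verifies the lemma.

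\textbf{Construction of the intermediate $c$.} Fix an auxiliary hyperbolic metric on $S$ and let $\gamma_0$ denote the smooth geodesic representative of the common isotopy class of $a$ and $b$; being smooth, $\gamma_0$ is itself a quasicircle. If $\gamma_0 \cap (a \cup b) = \emptyset$, simply take $c = \gamma_0$. Otherwise, I would produce a smooth simple closed curve $c$ isotopic to $\gamma_0$ and disjoint from $a \cup b$; such a $c$ is automatically a quasicircle by smoothness. The existence of such a $c$ reduces to the classical surface-topology fact that two homotopic essential simple closed curves on a closed oriented surface of genus $g \geq 2$ never fill $S$: the complement $S \setminus (a \cup b)$ always contains an essential simple closed curve in the isotopy class of $\gamma_0$. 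To see this I would pass to the universal cover $\mathbb{H}^2$: lifts $\widetilde{a}, \widetilde{b}$ preserved by the cyclic stabilizer $\langle g \rangle$ of the lifted axis $\widetilde{\gamma_0}$ are Jordan arcs joining the two fixed points of $g$ on $\partial \mathbb{H}^2$, and a $\langle g \rangle$-invariant corridor avoiding the full $\pi_1(S)$-orbit of $\widetilde{a} \cup \widetilde{b}$ descends to an embedded annular complementary region in $S \setminus (a \cup b)$, whose smooth core curve supplies the required $c$.

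\textbf{Main obstacle.} The principal technical difficulty is the potential wildness of the quasicircles $a$ and $b$: they need not be transverse to each other or to $\gamma_0$, and their intersection patterns may be complicated closed subsets (e.g.\ Cantor-type sets) rather than finite collections of transverse points, so standard bigon-reduction and transversality arguments do not apply directly. My plan to sidestep this is to rely only on the robust topological features of quasicircles — namely that they are Jordan curves in the oriented surface $S$ (and hence admit annular neighborhoods), and that their lifts to $\mathbb{H}^2$ have well-defined endpoints at infinity — and to execute the universal-cover argument at the purely topological level, extracting the invariant corridor using only the Jordan curve property of the lifts rather than any metric or differentiability hypothesis.
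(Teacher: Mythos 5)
Your ``if'' direction is fine, and the idea of producing a length-$2$ path $a, c, b$ by finding a single essential quasicircle $c$ in the common isotopy class disjoint from both $a$ and $b$ is a genuinely different strategy from the paper's. The paper does not try to find one curve disjoint from both; instead it fixes an ambient isotopy $H$ from $a$ to $b$, covers $[0,1]$ by the open sets $I_c = \{ t : H_t(a) \text{ and } c \text{ lie in an annulus}\}$ as $c$ ranges over vertices, extracts a finite subcover, and interleaves quasicircle approximations of the curves $H_{t_i}(a)$ with quasicircle approximations of annulus boundary components $d_i$. That produces a possibly long path, but each step only requires nudging a pair of nearby curves inside a thickened annulus, which is easy.

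The gap in your argument is at its central claim. You assert as a ``classical surface-topology fact'' that two homotopic essential simple closed curves never fill a closed surface of genus $g \geq 2$, and moreover (which is what you actually need) that $S \setminus (a \cup b)$ contains an essential simple closed curve \emph{in the isotopy class of $\gamma_0$}, not merely some essential curve. Neither statement is cited nor proved, and your universal-cover sketch has two real difficulties. First, non-core lifts of $b$ (i.e. $\pi_1(S)$-translates of $\widetilde b$ other than $\widetilde b$) are not required to stay away from $\widetilde a$: since $a$ and $b$ may intersect in $S$, such a translate can pass arbitrarily close to, and indeed cross, $\widetilde a$, so it is not clear that a $\langle g\rangle$-invariant corridor avoiding the \emph{full} $\pi_1(S)$-orbit of $\widetilde a \cup \widetilde b$ exists near the axis. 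Second, even granting such a corridor, you need its quotient by $\langle g \rangle$ to descend to an \emph{embedded} annulus in $S$, i.e. the corridor must be disjoint from all its $\pi_1(S)$-translates; this is the same difficulty in different clothes and is not addressed. Nailing down this claim looks to require roughly as much care as the lemma itself, and the paper's covering-by-annuli argument sidesteps it entirely. If you want to rescue your approach, you would need to supply a proof (or a precise reference) for the existence of $c$, with attention to the issues above.
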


\begin{proof}
    Let $a, b \in \qcg(S)$ be homotopic, and let $H : S^1 \times [0,1] \to S$ be an isotopy from $a$ to $b$.
    For $t \in [0,1]$, let $H_t(a)$ be the image of $S^1 \times \{t\}$ so that $H_0(a) = a$ and $H_1(a) = b$.
    Notice that for all $t \in (0,1)$, it may not be that $H_t(a)$ is a quasicircle.
    For each vertex of $c \in \qcg(S)$, we define,
    \[
        I_c = \{t \in [0,1] : a_t \text{ and } c \text{ are contained in the interior of some annulus}\}
    \]
    Each $I_c$ is open, and since $[0,1]$ is compact, we may find a sequence of open intervals $I_0, \ldots, I_k$ which cover $[0,1]$ so that each $I_i$ is in some $I_{c_i}$.
    Set $t_0 = 0$ and  $t_{k+1} = 1$.  We may suppose that $I_i \cap I_{i+1} \neq \emptyset$,
    so let $t_i \in I_i \cap I_{i+1}$.
    Then each pair $\{H_{t_i}(a) , H_{t_{i+1}}(a)\}$ is contained inside of some annulus.
    There exist quasicircles $a_{i}, a_{i+1}$, with $a_0=a$ and $a_{k+1}=b$, within some small neighborhood of $\{H_{t_i}(a), H_{t_{i+1}}(a)\}$ that are also contained in some annulus, as well. Let $d_i$ denote one of the boundary components from this annulus, and note that again we may find a quasicircle, which we also denote by $d_i$, within a small neighborhood of the boundary and is disjoint from both $a_i$ and $a_{i+1}$.
    Thus the quasicircle $d_i$ is disjoint and homotopic to both $a_{i}$ and $a_{{i+1}}$, and the sequence $\{a, d_0,a_1, \ldots a_{k}, d_{k}, b\}$ is a path in $\qcg(S)$.
\end{proof}

The following lemma  distinguishes in the quasicircle graph which separating curves bound a one-holed torus.
The proof follows immediately from Lemma 3.10 of \cite{booth2024automorphisms}.

\begin{lemma}\label{Lem:CharacterizationSepCurveOneHoledTori}
    Let $\gamma \in \qcg(S)$ be a separating curve. Then $\gamma$ bounds
    a one-holed torus
    subsurface if and only if one component of $S \setminus \gamma$ only contains curves that are either
    homotopic to $\gamma$ or nonseparating.
\end{lemma}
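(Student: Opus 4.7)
The plan is to reduce the assertion to a statement about isotopy classes of essential simple closed curves in the two components of $S \setminus \gamma$, and then invoke the topological classification of compact surfaces with one boundary component. The reduction is possible because every isotopy class of essential simple closed curves in $S$ has a geodesic representative with respect to any fixed hyperbolic metric, and geodesics are tautologically $1$-quasicircles by Definition~\ref{def-qcirc-on-genus}. Hence the existence of a vertex of $\qcg(S)$ lying in a specified complementary component of $\gamma$ and representing a specified isotopy class is equivalent to a purely topological condition on that isotopy class.

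For the forward direction, suppose $\gamma$ bounds a one-holed torus subsurface $T \subset S$, and let $\alpha \subset T$ be a quasicircle that is essential in $S$ and not homotopic to $\gamma$. The standard topological classification of essential simple closed curves on a one-holed torus shows that $\alpha$ is non-separating in $T$, so $T \setminus \alpha$ is connected and contains a collar of $\partial T = \gamma$. Gluing this collar to the other component of $S \setminus \gamma$, which is connected and also meets the collar, shows that $S \setminus \alpha$ is connected. Therefore $\alpha$ is non-separating in $S$.

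For the backward direction, let $\Sigma$ denote the component of $S \setminus \gamma$ with the stated property. Then $\Sigma$ is a compact orientable surface with one boundary component, of some genus $g' \geq 1$ (it cannot be a disk, since $\gamma$ is essential). The goal is to show $g' = 1$. If instead $g' \geq 2$, then there exists a simple closed curve $\delta \subset \Sigma$ that is separating in $\Sigma$, disjoint from $\gamma$, and not homotopic to $\gamma$; for instance $\delta$ may be chosen to bound a one-holed torus summand of $\Sigma$. Replacing $\delta$ by its geodesic representative produces a vertex of $\qcg(S)$ lying in $\Sigma$. The curve $\delta$ is also separating in $S$: removing it yields the one-holed torus bounded by $\delta$ in $\Sigma$ and the union of the rest of $\Sigma$ with the other component of $S \setminus \gamma$, the latter of which remains connected via a collar of $\gamma$. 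This contradicts the hypothesis on $\Sigma$, so $g' = 1$ and $\gamma$ bounds a one-holed torus.

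The main point to check—and the source of the paper's citation of Lemma~3.10 of \cite{booth2024automorphisms}—is the transfer of the argument from the $C^k$ setting to the quasicircle setting. No genuinely new difficulty arises, because each isotopy class of essential simple closed curves contains a quasicircle representative supplied by the geodesic representative. Thus the forward direction only needs topological classification of curves on a one-holed torus, and the backward direction only needs the ability to realize one separating curve on a higher-genus summand as a quasicircle.
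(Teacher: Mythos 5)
Your argument and the paper's are essentially the same in spirit: the paper simply cites \cite[Lemma~3.10]{booth2024automorphisms}, and you unpack the topological classification argument that underlies it. Your forward direction (classification of essential simple closed curves in a one-holed torus as boundary-parallel or nonseparating, plus the gluing argument showing nonseparating in $T$ implies nonseparating in $S$) is correct, as is the backward direction (a genus-$\ge 2$ piece contains a curve bounding a one-holed torus summand, which is separating in $S$ and not homotopic to $\gamma$).

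There is, however, a genuine error in the mechanism you use to produce quasicircle representatives. You claim that ``replacing $\delta$ by its geodesic representative produces a vertex of $\qcg(S)$ lying in $\Sigma$,'' and more generally that the existence of a vertex of $\qcg(S)$ in a prescribed complementary component of $\gamma$ representing a prescribed isotopy class is a purely topological condition \emph{because} geodesics are quasicircles. This does not work: $\gamma=\partial\Sigma$ is only a quasicircle, not a geodesic, so the geodesic representative of a class realized by a curve in $\Sigma$ may cross $\gamma$ and fail to lie in $\Sigma$. (Concretely, isotope $\gamma$ far to one side of its own geodesic representative and the geodesics in $\Sigma$'s ``interior'' classes will escape.) The conclusion you want is nonetheless true, and the fix is short: isotope $\delta$ \emph{within} $\Sigma$ to a smooth simple closed curve, which is a quasicircle by the differential characterization of quasiconformality recalled in \S\ref{s-qcbasics} (and implicit in Remark~\ref{remark-def-well-behaved}). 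With that replacement, your proof is correct and matches the intended argument.
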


The following lemma states that the set of quasicircles which bound one-holed tori is preserved by automorphisms of the quasicircle graph.
The following lemma is a weaker version of Lemma 3.11 from ~\cite{booth2024automorphisms}, but is sufficient for our current purposes.

\begin{lemma}
    Let $\Phi \in \Aut \qcg(S)$. Then $\Phi$ preserves the set of quasicircles that bound one-holed tori in $S$.
\end{lemma}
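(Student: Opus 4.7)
The plan is to read off the preservation of one-holed-torus-bounders from the combinatorial characterization in Lemma \ref{Lem:CharacterizationSepCurveOneHoledTori}, after verifying that every input to that characterization is itself preserved by an arbitrary $\Phi \in \Aut \qcg(S)$. Concretely, Lemma \ref{Lem:CharacterizationSepCurveOneHoledTori} says that $\gamma \in \qcg(S)$ bounds a one-holed torus exactly when $\gamma$ is separating and one component of $S \setminus \gamma$ contains only quasicircles that are either nonseparating or homotopic to $\gamma$. I need to transfer each of the three ingredients ``separating,'' ``side,'' and ``homotopic to $\gamma$'' through $\Phi$.

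The first two ingredients come for free: Lemma \ref{Lem:MulticurvesPreserves} tells us that $\Phi$ preserves the set of separating quasicircles (hence, since $\Phi$ is a bijection, also the set of nonseparating quasicircles) and that $\Phi$ preserves the partition into sides of a separating quasicircle. So given $\gamma$ with the property above and the distinguished complementary component $A$ of $S \setminus \gamma$ witnessing it, the image $\Phi(A)$ constitutes precisely the quasicircles lying in one fixed complementary component of $S \setminus \Phi(\gamma)$, and $\Phi(\gamma)$ is again separating.

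The remaining ingredient is invariance of homotopy: I must argue that if $c \in A$ is homotopic to $\gamma$, then $\Phi(c)$ is homotopic to $\Phi(\gamma)$. For this, I use Lemma \ref{Lem:QuasiCirclesHomotopic}, which expresses homotopy as the existence of a finite path of pairwise disjoint and pairwise homotopic quasicircles. The predicate ``disjoint and homotopic'' is, by the graph-theoretic criterion cited just before Lemma \ref{Lem:QuasiCirclesHomotopic} (Lemma 3.4 of \cite{booth2024automorphisms}), equivalent to the statement that $\{a,b\}$ spans an edge and $\mathrm{link}(a,b)$ is a join with one part containing only separating or only nonseparating vertices. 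Since $\Phi$ preserves edges, links, and the separating/nonseparating dichotomy, $\Phi$ preserves the relation ``disjoint and homotopic.'' Applying $\Phi$ to the disjoint-homotopic path from $c$ to $\gamma$ and invoking Lemma \ref{Lem:QuasiCirclesHomotopic} in the reverse direction gives $\Phi(c) \sim \Phi(\gamma)$.

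Combining the three preservation statements, every quasicircle in $\Phi(A)$ is either nonseparating or homotopic to $\Phi(\gamma)$, so $\Phi(\gamma)$ satisfies the Lemma \ref{Lem:CharacterizationSepCurveOneHoledTori} criterion and bounds a one-holed torus. Running the same argument with $\Phi^{-1}$ yields the reverse inclusion, proving that the set of one-holed-torus-bounding quasicircles is $\Phi$-invariant. The only step requiring genuine care is the homotopy-invariance step; the difficulty is not logical but one of bookkeeping, ensuring that the characterization of disjoint homotopic pairs used for classical curves transfers verbatim to quasicircles, which is exactly the content needed from \cite{booth2024automorphisms} and which the excerpt has already invoked.
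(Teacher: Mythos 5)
Your proposal is correct and follows the same route as the paper's proof: invoke the combinatorial characterization in Lemma \ref{Lem:CharacterizationSepCurveOneHoledTori} and check that $\Phi$ preserves each of its ingredients (separating/nonseparating, sides of a separating quasicircle, and homotopy via the disjoint-and-homotopic path criterion from Lemma \ref{Lem:QuasiCirclesHomotopic}). You are in fact somewhat more careful than the paper's two-line proof, which cites Lemmas \ref{Lem:SeparatingLinkJoin} and \ref{Lem:QuasiCirclesHomotopic} without spelling out the ``preserves sides'' step or the intermediate graph-theoretic detection of the ``disjoint and homotopic'' relation; both of those details you supply are genuinely needed and correct.
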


\begin{proof}
    Lemma \ref{Lem:CharacterizationSepCurveOneHoledTori} states that curves $\gamma$ which separate one holed tori are characterized by either quasicircles homotopic to $\gamma$ or by non-separating quasicircles.
    We know automorphisms of $\qcg(S)$ preserve separating quasicircles and homotopic quasicircles by Lemmas \ref{Lem:SeparatingLinkJoin} and  \ref{Lem:QuasiCirclesHomotopic}, which proves our claim.
\end{proof}

The preceding results in this section provide us with the following proposition:

\begin{proposition}\label{prop-toruspairs}
    Let $\Phi \in \Aut \qcg(S)$. Then $\Phi$ preserves the set of degenerate torus pairs, the set of nondegenerate nontransverse torus pairs, and the set of nondegenerate transverse torus pairs.
\end{proposition}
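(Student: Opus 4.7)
The plan is to assemble the graph-theoretic classifications from the preceding lemmas with the preservation results for the basic graph-theoretic ingredients. Proposition \ref{prop-toruspairs} is essentially a bookkeeping corollary of the earlier work in this subsection, so the task is to chain those results together carefully.

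First, I would verify that the class of torus pairs as a whole is preserved by $\Phi$. By Lemma \ref{Lem:TorusPairClassification}, being a torus pair (for nonseparating, nonhomotopic $a,b$) is characterized by three ingredients: nonseparation and non-homotopy of $a$ and $b$, the hull of $\{a,b\}$ containing at most one additional vertex, and the existence of a separating quasicircle $\gamma$ disjoint from $a, b$ one of whose sides is a one-holed torus subsurface containing $a, b$. Every ingredient is already shown to be $\Phi$-invariant: nonseparation by Lemma \ref{Lem:MulticurvesPreserves}, non-homotopy by Lemma \ref{Lem:QuasiCirclesHomotopic}, hulls by Lemma \ref{Lem:AutomorphismsHullPreserved}, separating curves and their sides again by Lemma \ref{Lem:MulticurvesPreserves}, and the property of bounding a one-holed torus by the unnamed lemma following Lemma \ref{Lem:CharacterizationSepCurveOneHoledTori}.

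Second, for nondegenerate transverse torus pairs I would invoke Lemma \ref{Lem:NonDegenNonTransTorusPairClassification}, which keeps the separating-quasicircle condition but replaces the hull condition with "exactly three vertices." The same preservation facts apply verbatim, so this class is $\Phi$-invariant. Third, to separate degenerate from nondegenerate torus pairs I would use the unnamed lemma characterizing nondegeneracy via the existence of disjoint homotopic curves $d_1, d_2$ such that each of $\{a,d_1\}, \{a,d_2\}, \{b,d_1\}, \{b,d_2\}$ is a torus pair and such that any quasicircle $e$ disjoint from and non-homotopic to $d_1, d_2$ intersects $a$ iff it intersects $b$. All of these conditions—disjointness (adjacency in $\qcg(S)$), intersection (non-adjacency), homotopy, and being a torus pair—are either primitive graph data or have just been shown to be $\Phi$-invariant, so nondegeneracy itself is $\Phi$-invariant.

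Combining these three observations: the set of torus pairs, the subset of nondegenerate torus pairs, and the subset of nondegenerate transverse torus pairs are each preserved by $\Phi$. The degenerate torus pairs are the complement of the nondegenerate ones within the torus pairs, and the nondegenerate nontransverse torus pairs are the complement of the nondegenerate transverse ones within the nondegenerate torus pairs, so all three classes in the statement are $\Phi$-invariant. There is no real obstacle in the argument itself; the main thing to watch is that every ingredient of every characterization has been given an explicit preservation statement earlier in the section, so that the proof is genuinely a concatenation of the lemmas rather than requiring new input.
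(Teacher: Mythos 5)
Your proposal is correct and matches the paper's approach: the paper offers no explicit proof beyond observing that the preceding lemmas in the subsection combine to give the result, which is exactly the concatenation you carry out. You correctly identify the three characterizations (Lemma \ref{Lem:TorusPairClassification} for torus pairs, Lemma \ref{Lem:NonDegenNonTransTorusPairClassification} for nondegenerate transverse pairs, and the unnamed lemma for nondegeneracy) and check that each ingredient is $\Phi$-invariant by the cited preservation lemmas.
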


\subsection{Bigon pairs}\label{Subsect:bigon}
 A pair of homotopic vertices $c,d \in \qcg(S)$ are said to form a \textit{bigon pair} if $c \cap d$ is a nontrivial closed interval and the boundary of the disk component of $S \setminus \{c,d\}$  forms a quasicircle in $S$.
 In this section, our goal is to prove that bigon pairs are preserved under automorphisms of $\qcg(S)$.

\begin{lemma}[Existence of Bigon Pairs]\label{Lem:ExistenceBigonPairs}
    Let $\gamma$ be a quasicircle in $S$ with distinct points $p_1, p_2 \in \gamma$. Let $\gamma_1, \gamma_2$ be the closures of the connected components of $\gamma - \{p_1, p_2\}$.
    Then there exist essential quasicircles $\eta_1, \eta_2$ in $S$ so that:
    \begin{enumerate}
        \item $\eta_i \cap \gamma = \gamma_i$ ($i = 1, 2$),
        \item $\eta_1 \cap \eta_2 = \eta_1 - \text{int}(\gamma_1) = \eta_2 - \text{int}(\gamma_2)$.
    \end{enumerate}
\end{lemma}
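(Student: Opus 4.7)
The plan is to reduce the construction to the smooth setting using Definition~\ref{def-qcirc-on-genus}. Write $\gamma = \varphi(\gamma_0)$, where $\gamma_0$ is a simple closed geodesic and $\varphi \colon S \to S$ is quasiconformal. Set $q_i = \varphi^{-1}(p_i) \in \gamma_0$ and let $\gamma_0^{(i)} = \varphi^{-1}(\gamma_i)$ be the two closed subarcs of $\gamma_0$ cut out by $q_1, q_2$. The strategy is to construct a single smooth arc $\beta$ from $q_1$ to $q_2$ in $S$, form the piecewise smooth simple closed curves $\eta_i^0 := \gamma_0^{(i)} \cup \beta$, and push forward by $\varphi$ to obtain $\alpha := \varphi(\beta)$ and $\eta_i := \varphi(\eta_i^0) = \gamma_i \cup \alpha$.

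The arc $\beta$ is chosen to satisfy two requirements. First, $\beta$ is smooth, meets $\gamma_0$ only at $\{q_1, q_2\}$, and crosses $\gamma_0$ transversely at each endpoint; this guarantees that each $\eta_i^0$ is a piecewise smooth simple closed curve with exactly two finite-angle corners, and so satisfies the metric bounded turning condition. Second, $\beta$ is isotopic rel endpoints (in $S$) to neither $\gamma_0^{(1)}$ nor $\gamma_0^{(2)}$, which is equivalent to neither $\eta_i^0$ bounding an embedded disk in $S$; this ensures both $\eta_i^0$ are essential. The existence of such a $\beta$ is where the hypothesis $g \geq 2$ enters: since $S \setminus \gamma_0$ has non-abelian fundamental group, one can route $\beta$ as a small perturbation of $\gamma_0^{(1)}$ that detours around a handle of $S$ disjoint from $\gamma_0$, giving an arc whose isotopy class in $S$ differs from both $[\gamma_0^{(1)}]$ and $[\gamma_0^{(2)}]$. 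With $\beta$ so chosen, each $\eta_i^0$ is an essential quasicircle by Remark~\ref{remark-def-well-behaved}.

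To finish, push forward by $\varphi$. Writing $\eta_i^0 = \psi_i(\delta_i)$ via Definition~\ref{def-qcirc-on-genus} for a simple closed geodesic $\delta_i$ and a quasiconformal map $\psi_i$, we have $\eta_i = (\varphi \circ \psi_i)(\delta_i)$, the image of a simple closed geodesic under a quasiconformal map, and thus an essential quasicircle. The intersection conditions follow from $\beta \cap \gamma_0 = \{q_1, q_2\}$ and the injectivity of $\varphi$: $\eta_i \cap \gamma = \varphi(\eta_i^0 \cap \gamma_0) = \varphi(\gamma_0^{(i)}) = \gamma_i$, and $\eta_1 \cap \eta_2 = \varphi(\eta_1^0 \cap \eta_2^0) = \varphi(\beta) = \alpha$, which equals $\eta_i - \op{int}(\gamma_i)$ for each $i$. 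The main obstacle is arranging simultaneous essentialness of $\eta_1^0$ and $\eta_2^0$, as this forces $\beta$ to lie in an arc isotopy class distinct from both of the $\gamma_0^{(i)}$; this is the sole step where the genus $\geq 2$ hypothesis is used.
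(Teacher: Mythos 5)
Your reduction starts from Definition~\ref{def-qcirc-on-genus}, writing $\gamma = \varphi(\gamma_0)$ for a simple closed geodesic $\gamma_0$ and a quasiconformal self-map $\varphi$ of $S$. This presumes $\gamma$ is \emph{essential}, but the lemma is stated for an arbitrary quasicircle $\gamma$, and the inessential case is the one actually needed downstream: bigon pairs exist precisely to encode inessential quasicircles, and the proof of Proposition~\ref{Prop:AutQuasiGraphToAutExtended} requires a bigon pair for \emph{every} inessential quasicircle $e$. An inessential quasicircle (defined, per the convention following Lemma~\ref{lemma-bt-localizes}, by the two-point metric bounded turning condition) is not homotopic to any simple closed geodesic, so no $\gamma_0$ as in Definition~\ref{def-qcirc-on-genus} exists and your argument does not start. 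You identify ``simultaneous essentialness of the $\eta_i^0$'' as the main obstacle, but the more basic obstacle is that your initial uniformization step is unavailable in the case that matters most.

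The paper's proof is structured to avoid exactly this: rather than uniformizing $\gamma$ globally, it works in an annular neighborhood $U$ of $\gamma$, conformally identifies $U$ with a round annulus in $\bbC$, applies Tukia's Theorem~\ref{thm-smooth-away-from-bad} to quasiconformally straighten $\gamma$ by a map smooth off $\gamma$, and extends this to a quasiconformal homeomorphism $h$ of all of $S$ taking $\gamma$ to a smooth curve. This treats essential and inessential $\gamma$ uniformly. From there your construction --- a transverse connecting arc chosen in an isotopy class making both resulting curves essential (using $g \geq 2$ to route around a handle), followed by transport back by $h^{-1}$ --- is parallel to the paper's. So the fix is to replace the global appeal to Definition~\ref{def-qcirc-on-genus} with the local Tukia-plus-extension argument; for essential $\gamma$ your shortcut is a clean alternative, and your handling of essentialness of the $\eta_i^0$ is a point the paper leaves implicit.
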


\begin{proof}
    The proof uses the same technique appearing in Remark \ref{remark-def-well-behaved} in \S\ref{s-qcbasics}. 
    Take an annular neighborhood $U$ of $\gamma$.
    Then there is a round annulus $A$ containing the unit circle $\gamma_0$ in $\bbC$ and a conformal homeomorphism $\psi : U  \to A$.
    Theorem \ref{thm-smooth-away-from-bad} then gives a quasiconformal homeomorphism of $\bbC$ that so $\varphi(\gamma) = \gamma_0$.
    There is then a neighborhood $U' \subset U$ so that $\psi^{-1} \circ \varphi \circ \psi : U' \to S$ is quasiconformal, is smooth away from $\gamma$ and so that $\gamma' = \psi^{-1} \circ \varphi \circ \psi(\gamma)$ is smooth.
    After potentially shrinking $U'$ again, extend $\psi^{-1} \circ \varphi \circ \psi$ to a homeomorphism $h : S \to S$ that is smooth away from $\gamma$ and hence quasiconformal on $S$ from the differential characterization of quasiconformality.
    The desired construction can now be run without trouble with the smooth curve $\gamma'$ so that the curves $\eta_1', \eta_2'$ are smooth away from right-angle turns at $h(p_i)$, and hence are quasicircles.
    Then $\eta_i = h^{-1}(\eta_i')$ are quasicircles and have the desired properties.
\end{proof} 

\subsection*{Annulus sets}
Suppose that $(a, b)$ is an ordered pair of disjoint, homotopic vertices of $\qcg(S)$.
As $g \geq 2$, there is a unique annulus $A$ in $S=S_g$ whose boundary is $a \cup b$. Let $\qcg(a, b)$ be
the set of vertices of $\qcg(S)$ contained in $A$. We refer to $\qcg(a, b)$ as an \textit{annulus set}. We say
that a pair of vertices of $\qcg(S)$ is an \textit{annulus pair} if they lie in some $\qcg(A')$, in other words, the quasicircle graph restricted to some annulus $A'$. 
A \textit{nonseparating 
noncrossing annulus pair} is an annulus pair where both curves are nonseparating and the
pair has no crossing intersections.
There is a natural partial ordering on the annulus set $\qcg(a,b)$: we say that $c \preceq d$ if $c$ and $d$ are
noncrossing and each component of $c \setminus d$ lies in a component of $A \setminus d$ bounded by $a$.

The following lemma is the analogue of Lemma 2.6 from \cite{long2023automorphisms}:

\begin{lemma}\label{Lem:PreservesAnnulusSets}
    Let $\Phi \in \Aut\qcg(S)$ and  let $a$,$b$ be
    disjoint, homotopic nonseparating quasicircles. Then:
    \begin{enumerate}
        \item The quasicircles $\Phi(a)$ and $\Phi(b)$ are disjoint, homotopic nonseparating quasicircles.
        \item The image of $\qcg(a,b)$ under $\Phi$ is $\qcg(\Phi(a),\Phi(b))$
        \item If $c,d \in \qcg(a,b)$ are noncrossing then $\Phi(c)$ and $\Phi(d)$ are noncrossing.
        \item If $c \preceq d$ in $\qcg(a,b)$ then $\Phi(c) \preceq \Phi(d)$ in $\qcg(\Phi(a),\Phi(b))$.
    \end{enumerate}
\end{lemma}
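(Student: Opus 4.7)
Plan. The lemma is the quasicircle analogue of Lemma 2.6 of~\cite{long2023automorphisms}, and my plan is to follow that argument's outline, with the adaptations needed to ensure all auxiliary curves produced lie in $\qcg(S)$.

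Part (1) is immediate from preservation results already in hand: $\Phi(a)$ and $\Phi(b)$ are disjoint because $\Phi$ is a graph automorphism and edges encode disjointness; they are homotopic by Lemma~\ref{Lem:QuasiCirclesHomotopic} combined with Lemma~3.4 of~\cite{booth2024automorphisms}; and they are nonseparating by Lemma~\ref{Lem:MulticurvesPreserves}. For (2), the pair $\{a, b\}$ is a separating multiquasicircle, since $a$ and $b$ are disjoint, homotopic, and nonseparating on a closed surface of genus at least $2$; the complement $S \setminus (a \cup b)$ has exactly two components, the annulus $A$ and a genus $g-1$ subsurface with two boundary components. The annular side is graph-theoretically distinguished as the unique side of $\{a, b\}$ on which every essential quasicircle is homotopic to $a$, while the other side contains essential quasicircles of isotopy classes other than that of $a$. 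Combining Lemma~\ref{Lem:MulticurvesPreserves} with Lemma~\ref{Lem:QuasiCirclesHomotopic}, $\Phi$ preserves both the sides of $\{a, b\}$ and homotopy classes of essential quasicircles, so $\Phi$ sends the annular side to the annular side, which is~(2).

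For (3) and (4), the strategy is to characterize the noncrossing relation and the partial order $\preceq$ on $\qcg(a, b)$ purely in terms of the induced subgraph $\qcg(a,b)$ together with the distinguished boundary label $a$. Once (2) is in hand, any such combinatorial characterization is automatically transported by $\Phi$. For $\preceq$, the key observation is that $c \preceq d$ holds exactly when $c$ and $d$ are noncrossing and $c$ lies on the $a$-side of $\{a, d\}$ as a separating multiquasicircle in $S$, a condition preserved by $\Phi$ via Lemma~\ref{Lem:MulticurvesPreserves}. For noncrossing, I would use a characterization in the spirit of~\cite{long2023automorphisms}: $c, d \in \qcg(a,b)$ are noncrossing precisely when one can exhibit witnessing quasicircles $e, f \in \qcg(a,b)$ bounding a sub-annulus of $A$ that contains $c \cup d$ in a suitably compatible way with respect to the partial order.

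The main obstacle is that these witnessing curves must themselves be essential quasicircles, while the topological setting of~\cite{long2023automorphisms} allows arbitrary small perturbations. I would produce the required witnesses by combining Lemma~\ref{Lem:ExistenceBigonPairs} with the local smoothing scheme of Remark~\ref{remark-def-well-behaved}, based on Theorem~\ref{thm-smooth-away-from-bad}; together these yield essential quasicircles realizing prescribed local intersection patterns with a fixed reference quasicircle, which is exactly what the combinatorial characterizations above need. Once the witnesses are available, invariance of the characterizations under $\Phi$ follows from~(2), Lemma~\ref{Lem:MulticurvesPreserves}, and Lemma~\ref{Lem:AutomorphismsHullPreserved}.
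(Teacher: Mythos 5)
Your approach to parts (1) and (2) is sound and in the same spirit as the paper, though the paper's proof is even more streamlined: it observes directly that two disjoint nonseparating quasicircles $a,b$ are homotopic if and only if $\{a,b\}$ is a separating multiquasicircle and all separating quasicircles disjoint from both lie on the same side, which is preserved by Lemma~\ref{Lem:MulticurvesPreserves}. Your route via Lemma~\ref{Lem:QuasiCirclesHomotopic} together with the join criterion also works.

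The concern is part (4). Your proposed characterization --- that $c \preceq d$ exactly when $c$ and $d$ are noncrossing and $c$ lies on the $a$-side of the separating multiquasicircle $\{a,d\}$ --- implicitly assumes $c$ is disjoint from $d$. But the partial order is defined on \emph{noncrossing} pairs, which may touch along nontrivial arcs: the definition reads ``each component of $c \setminus d$ lies in a component of $A \setminus d$ bounded by $a$,'' precisely to accommodate $c \cap d \neq \emptyset$. When $c$ touches $d$, the phrase ``$c$ lies on the $a$-side of $\{a,d\}$'' is not meaningful, since $c$ is not a vertex of $\mathrm{link}(\{a,d\})$, and Lemma~\ref{Lem:MulticurvesPreserves} does not directly apply. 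The paper avoids this by using a \emph{third} witnessing curve: noncrossing $c,d \in \qcg(a,b)$ satisfy $c \preceq d$ if and only if there exists $e \in \qcg(a,b)$ that intersects $a$ and $c$ but not $d$. This characterization is phrased entirely in terms of intersection/disjointness with a third vertex and therefore survives the degenerate case of touching curves. You should replace your criterion for $\preceq$ with a witness-based one of this kind, and confirm the existence of the witness $e$ as an essential quasicircle (your proposed use of Lemma~\ref{Lem:ExistenceBigonPairs} and Remark~\ref{remark-def-well-behaved} is the right tool for that). Part (3), which you leave at the level of a sketch, would then be handled similarly; the paper is also terse here, but be aware that the burden in the quasicircle setting is to exhibit the witnesses as genuine quasicircles, not just topological curves.
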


\begin{proof}
    The proof of this lemma follows from Lemma \ref{Lem:MulticurvesPreserves} and the following two observations.
    First, note that two disjoint nonseparating curves a and $b$ in $S$ are homotopic if and only if both $a$ and $b$ form a separating multiquasicircle and all separating quasicircles disjoint from both $a$ and $b$ lie on the same side of the multiquasicircle $a \cup b$.
    Second, noncrossing quasicircles $c,d \in \qcg(a,b)$ satisfy $c \preceq d$ if and only if there is an element of $\qcg(a,b)$ which intersects $a$ and $c$ but not $d$.
    Then the claims follow in order.
\end{proof}

\subsection*{Type 1 and type 2 curves}
Suppose that $\{c, d\}$ is a nonseparating noncrossing annulus pair,
and suppose that $e$ is a quasicircle so that $\{c, e\}$ and $\{d, e\}$ are degenerate torus pairs. If $c\cap e$ and
$d \cap e$ are the same point, then we say that $e$ is a \textit{type 1 quasicircle} for $\{c, d\}$. Otherwise we say
that $e$ is a \textit{type 2 quasicircle} for $\{c, d\}$.
\begin{figure}[h]
    \begin{tikzpicture}
        \small
        \node[anchor=south west, inner sep = 0] at (0,0){\includegraphics[width=3.5in]{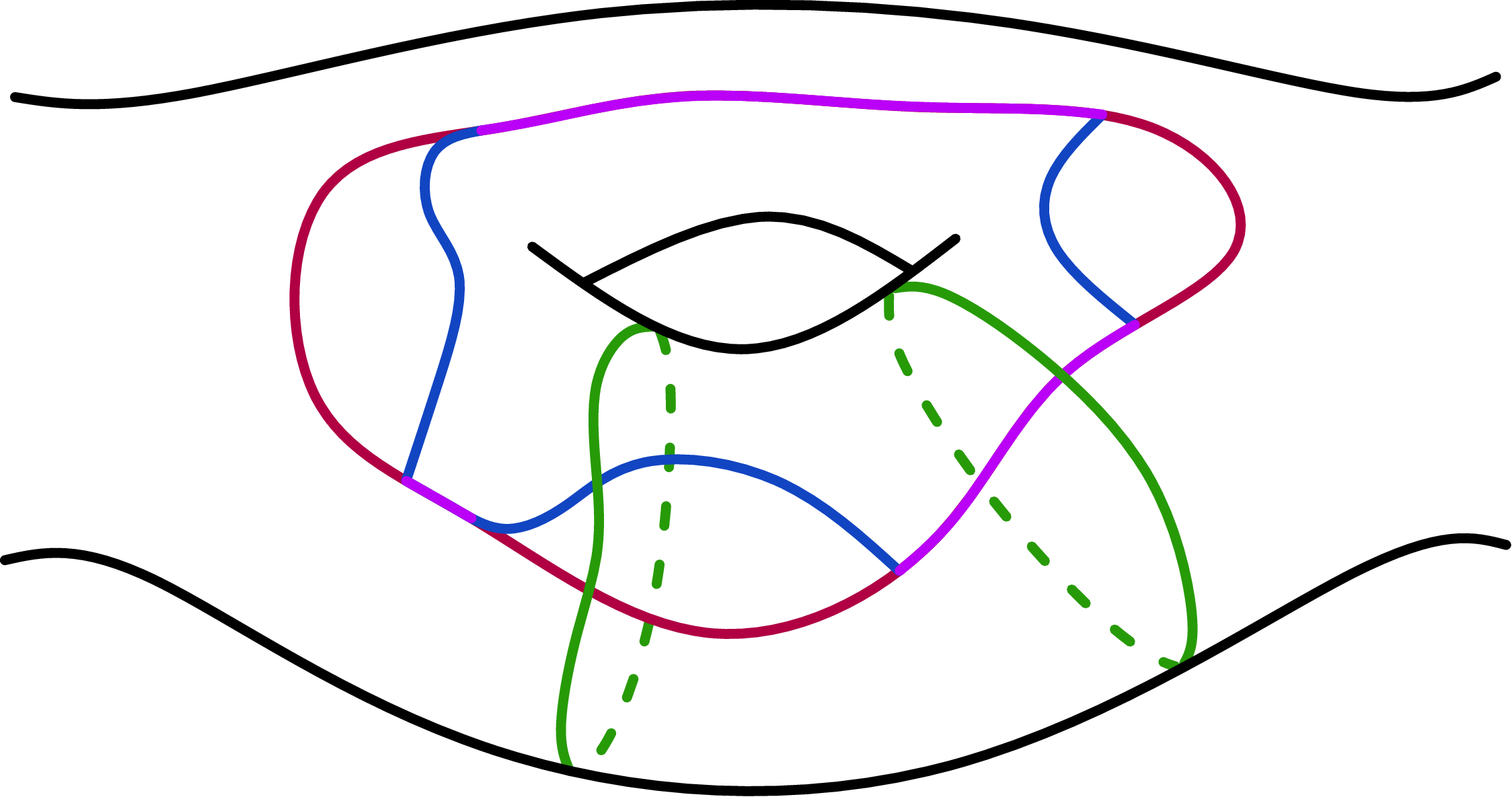} };
        \node at (7.5,1.6) {\color{OliveGreen}type 1};
        \node at (2.75,.8) {\color{OliveGreen}type 2};
        \node at (2.75,3.5) {\color{blue}$d$};
        \node at (1.55,3) {\color{Maroon}$c$};
    \end{tikzpicture}
    \caption{Examples of type 1 and type 2 quasicircles for the nonseparating noncrossing annulus pair $\{c, d\}$.}
\end{figure}

\begin{lemma}\label{Lem:Type1Type2Preserved}
    Let $\Phi \in \Aut \qcg(S)$. Then $\Phi$ preserves type 1
and type 2 quasicircles for nonseparating noncrossing annulus pairs. More precisely, if $\{c, d\}$ is a
nonseparating noncrossing annulus pair and $e$ is a type 1 quasicircle for $\{c, d\}$, then $\Phi(e)$ is a type
1 quasicircle for the nonseparating noncrossing annulus pair $\{\Phi(c), \Phi(d)\}$, and similarly for type
2 quasicircles.
\end{lemma}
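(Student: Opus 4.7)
The plan is to combine the already-established preservation results for degenerate torus pairs and annulus structure with a graph-theoretic criterion distinguishing type 1 from type 2 quasicircles. The proof runs parallel to the corresponding lemma for $C^k$-curves in \cite{booth2024automorphisms}, adapted to the quasicircle setting using the machinery of the preceding subsections.

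First, I would apply Proposition \ref{prop-toruspairs} to see that $\{\Phi(c), \Phi(e)\}$ and $\{\Phi(d), \Phi(e)\}$ remain degenerate torus pairs, and Lemma \ref{Lem:PreservesAnnulusSets} to see that $\{\Phi(c), \Phi(d)\}$ remains a nonseparating noncrossing annulus pair with its partial order $\preceq$ intact. These together force $\Phi(e)$ to be either a type 1 or a type 2 quasicircle for $\{\Phi(c), \Phi(d)\}$, so the content of the lemma reduces to showing that $\Phi$ does not swap the two classes.

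The main step is then to isolate a graph-theoretic criterion separating the two types. Geometrically, the distinction is that in type 2 the quasicircle $e$ has an essential arc running through the interior of the annulus bounded by $c$ and $d$ between its two intersection points, while in type 1 these points coincide at a shared point of $c$ and $d$, and $e$ does not enter the annulus interior. A natural candidate characterization is: $e$ is type 2 for $\{c,d\}$ if and only if there exists a quasicircle $f \in \qcg(c,d)$ with $c \prec f \prec d$ in the annulus partial order such that $\{e, f\}$ is a degenerate torus pair. In type 2, such an $f$ can be produced from a smooth core curve of the annulus arranged to cross the interior arc of $e$ transversely at one point, then upgraded to a quasicircle using Theorem \ref{thm-smooth-away-from-bad} and a variant of the bigon construction in Lemma \ref{Lem:ExistenceBigonPairs}. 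In type 1 no such $f$ exists, since any curve strictly interior to the annulus partial order is disjoint from $e$ near the shared intersection point and algebraic intersection constraints make a single transverse crossing impossible. Preservation of this criterion by $\Phi$ is then immediate from Proposition \ref{prop-toruspairs} and Lemma \ref{Lem:PreservesAnnulusSets}(4), completing the proof.

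The main obstacle is pinning down the criterion precisely so that both directions of the equivalence hold, and in particular verifying the type 1 direction, where one must rule out interpolating quasicircles using only data preserved by $\Phi$. This mirrors the most delicate step in the analogous argument in \cite{booth2024automorphisms}, and I expect the adaptation to quasicircles to go through by using the quasiconformal extension results of \S\ref{s-qcbasics} to produce (in the type 2 case) or obstruct (in the type 1 case) the required interpolating quasicircle, rather than the smoothing methods used in the $C^k$ setting.
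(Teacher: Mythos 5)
Your high-level strategy — reduce to showing that $\Phi$ respects a graph-theoretic property that separates type 1 from type 2 — matches the paper, but the criterion you propose is genuinely different from the one actually used, and I don't think it survives scrutiny.

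The paper's criterion (in the nondisjoint case) is: $e$ is type 2 for $\{c,d\}$ iff there is a quasicircle $f$ that lies in the hull of $\{c,d,e\}$, is \emph{not} in $\qcg(a,b)$, and is $\neq e$. Such an $f$ is produced by pushing the interior arc of $e$ off itself into one of the complementary disks of $c\cup d\cup e$; conversely, if $e$ is type 1, then any curve in the hull that escapes the annulus must pick up all of $e$ and so coincides with $e$, violating condition (3). Your proposed criterion instead asks for an $f$ squeezed \emph{inside} the annulus, strictly between $c$ and $d$ in the partial order, with $\{e,f\}$ a degenerate torus pair. Besides the notational slip ($\qcg(c,d)$ is only defined when $c,d$ are disjoint and homotopic; you mean $\qcg(a,b)$), this criterion runs into trouble exactly when $c$ and $d$ are not disjoint, which is the nontrivial case.

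There are two concrete problems. First, the existence direction is not clear: if $c$ and $d$ are tangent at finitely many points, the region strictly between them is a union of bigon disks, which contains no essential curve avoiding the tangent points — so any candidate $f$ must pass through those points, and it is then unclear (and not addressed by your appeal to Theorem \ref{thm-smooth-away-from-bad} or the construction in Lemma \ref{Lem:ExistenceBigonPairs}) whether such an $f$ can be a quasicircle. Second, and more decisively, your justification for the type 1 direction — ``algebraic intersection constraints make a single transverse crossing impossible'' — is false. Any $f \in \qcg(a,b)$ is isotopic to the core of $A$, so $\hat{i}(e,f) = \hat{i}(e,c) = \pm 1$; a single transverse crossing is entirely consistent with this. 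Indeed, if an $f$ strictly between $c$ and $d$ passes through the shared tangent point $p$ (and your sketch does not rule this out), it crosses $e$ transversely precisely at $p$, giving a degenerate torus pair $\{e,f\}$ even though $e$ is type 1 — which would make the criterion fail outright. The paper's hull-based criterion sidesteps both issues because its witness $f$ is a perturbation of $e$ rather than a curve confined to the pinched region between $c$ and $d$; I'd recommend switching to that formulation.
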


\begin{proof}[Proof Sketch]
Since this proof is nearly identical to Lemma 2.7 of \cite{long2023automorphisms}, we only give a sketch of the arguments here. 
    First, if $c$ and $d$ are disjoint, then any quasicircle $e$ with $\{c,e\}$ and $\{d,e\}$ being degenerate torus pairs must be a type 2 curve. Since $\Phi$ preserves disjoint curves and preserves degenerate torus pairs, we have that $\Phi$ preserves that $e$ is a type 2 quasicircle.

    Now if $c,d \in \qcg(a,b)$ are not disjoint, then they have at least one point, or interval, of intersection.
    In this case, $e$ is a type 2 quasicircle for $\{c,d\}$ if and only if there is a quasicircle $f$ with the following properties: 
    \begin{enumerate}
        \item $f$ is contained in the hull of $\{c,d,e\}$,
        \item $f$ is not contained in $\qcg(a,b)$, and
        \item $f$ is not equal to $e$.
    \end{enumerate}
    The forward direction can be proved directly by pushing off of $e$ within an open disk bounded by $c\cup d$. For the other
    direction, if $e$ is of type 1, then any quasicircle that satisfies the first
    two given properties would have to contain all of $e$, and hence would fail the third property. 
\end{proof}

We can now show the following:

\begin{proposition}\label{Prop:BigonPairsPreserved}
    Let $\Phi \in \Aut\qcg(S)$. Then $\Phi$ preserves the set of nonseparating bigon pairs.
\end{proposition}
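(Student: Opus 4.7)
The plan is to give a graph-theoretic characterization of nonseparating bigon pairs that only references structures shown to be preserved by automorphisms of $\qcg(S)$ in the preceding lemmas, following the strategy of the analogous results in \cite{long2023automorphisms} and \cite{booth2024automorphisms} for the fine curve graph and $C^k$-curve graph, with appropriate modifications for quasicircles.

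First I would observe that any nonseparating bigon pair $\{c,d\}$ is in particular a nonseparating noncrossing annulus pair: $c$ and $d$ are nonseparating, homotopic (an isotopy across the bigon exhibits this), and noncrossing (they meet along an interval but do not transversely cross through it). By Lemma~\ref{Lem:PreservesAnnulusSets}, $\Phi$ sends nonseparating noncrossing annulus pairs to nonseparating noncrossing annulus pairs while preserving the partial order $\preceq$, so it suffices to distinguish bigon pairs within this class. The condition that $c$ and $d$ are not disjoint (so $c \cap d \neq \emptyset$) is automatically preserved because $\Phi$ preserves edges and non-edges.

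Within noncrossing annulus pairs that are not disjoint, the remaining cases to separate are: (i) the single-interval case (true bigon pair); (ii) the single-point tangency case; and (iii) the multi-interval case. I would distinguish these using the type 1/type 2 classification (Lemma~\ref{Lem:Type1Type2Preserved}) together with hull configurations (Lemma~\ref{Lem:AutomorphismsHullPreserved}). The tangency case (ii) is ruled out by the existence of at least two distinct type 1 quasicircles for $\{c,d\}$ passing through distinct points of the shared arc, a condition that can hold only when the shared intersection contains an interval. The multi-interval case (iii) is separated from the bigon case by the existence of two disjoint type 1 quasicircles for $\{c,d\}$ that lie in distinct bigon regions, which can be detected graph-theoretically via their relative positions with respect to auxiliary curves in the annulus set and the partial order $\preceq$. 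Finally, the condition that the bigon boundary itself be a quasicircle is ensured by Lemma~\ref{Lem:ExistenceBigonPairs}: whenever the combinatorial conditions above are met for $\{\Phi(c), \Phi(d)\}$, a genuine bigon pair with quasicircle boundary can be produced nearby, and the resulting curves are seen to coincide with $\{\Phi(c), \Phi(d)\}$ by pushing off and using invariance of the partial order.

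The main obstacle is step (iii): formulating precisely the combinatorial obstruction to multiple intersection intervals. Type 1 curves exist at every point of $c \cap d$, so one must use their configurations with respect to a third auxiliary quasicircle (a type 2 curve, say, traversing the putative bigon) and the partial ordering on $\qcg(a,b)$ to detect whether the shared intersection is connected. A secondary technical point is the quasicircularity of the bigon boundary, which, unlike the topological case in \cite{long2023automorphisms}, is a nontrivial regularity statement; this is absorbed by appealing to Lemma~\ref{Lem:ExistenceBigonPairs} after the combinatorial data of a bigon pair has been established for $\{\Phi(c),\Phi(d)\}$.
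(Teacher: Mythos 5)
Your overall strategy matches the paper's: reduce to noncrossing annulus pairs (Lemma~\ref{Lem:PreservesAnnulusSets}) and then distinguish bigon pairs within that class via the type~1/type~2 machinery (Lemma~\ref{Lem:Type1Type2Preserved}). In fact the paper's entire proof is a one-line citation to Proposition~2.1 of \cite{long2023automorphisms} with the observation that the three needed ingredients (annulus sets, type~1 curves, type~2 curves) have been established for $\qcg(S)$; you are attempting to reconstruct the underlying combinatorial characterization, which is more than the paper does explicitly, and you correctly identify which lemmas would be needed.

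That said, there are two places where the proposal does not close. First, you concede in your own last paragraph that you have not actually formulated the combinatorial condition that separates case~(iii) (multiple intersection components) from case~(i); without a precise graph-theoretic statement, the claim that $\Phi$ preserves the resulting class is not established. Second, and more seriously, your treatment of the quasicircularity of the bigon boundary misuses Lemma~\ref{Lem:ExistenceBigonPairs}. That lemma goes in the opposite direction: given an inessential quasicircle $\gamma$ and two points on it, it \emph{constructs} essential quasicircles $\eta_1,\eta_2$ forming a bigon pair whose bigon boundary is $\gamma$. It says nothing about whether an arbitrary pair of essential quasicircles $\Phi(c),\Phi(d)$ meeting along a single arc has a quasicircle as its bigon boundary, and ``producing a genuine bigon pair nearby'' cannot force the original pair to coincide with it. Indeed, two quasicircles sharing an arc and tangent at the endpoints yield a bigon boundary with cusps, which fails the bounded turning condition; the quasicircularity hypothesis in the definition of bigon pair is a genuine constraint, not a formality. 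To close this one would need to argue either that the combinatorial conditions implicitly rule out such degenerations in the image, or that the downstream construction of $\overline{\Phi}$ can be salvaged without it; the proposal as written does neither.
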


    The proof of the corresponding proposition in \cite{long2023automorphisms} (see [Proposition 2.1] only depends on automorphisms of the fine curve graph preserving annulus sets, type 1 curves, and type 2 curves.
As we have the corresponding results in the case of quasicircles, the proof from \cite{long2023automorphisms} extends to $\qcg(S)$. 

\subsection{Sharing pairs}\label{Subsect:sharing}
We denote by $S_g^b$ the surface given by removing the interiors of $b$ disjoint disks with quasicircle boundaries from $S_g$. 
We say that a pair of bigon pairs $\{\{a, b\},\{a',b'\}\}$  is a
\textit{sharing pair} if the nondisk $S^1_g$ components of $S \setminus \{a\Delta b\}$ and $S \setminus \{a'\Delta b'\}$ are the same  and the  arcs $a \cap b$, $a' \cap b'$ in $S^1_g$ have disjoint interiors with four unique endpoints on the boundary quasicircle. A sharing pair is said to be \textit{linked} if the endpoints of $a \cap b$ separate the endpoints of $a' \cap b'$ in the boundary. 
In this section, we prove that automorphisms of $\qcg(S)$ preserve linked sharing pairs.

\begin{figure}[h]
    \includegraphics[width=2.5in]{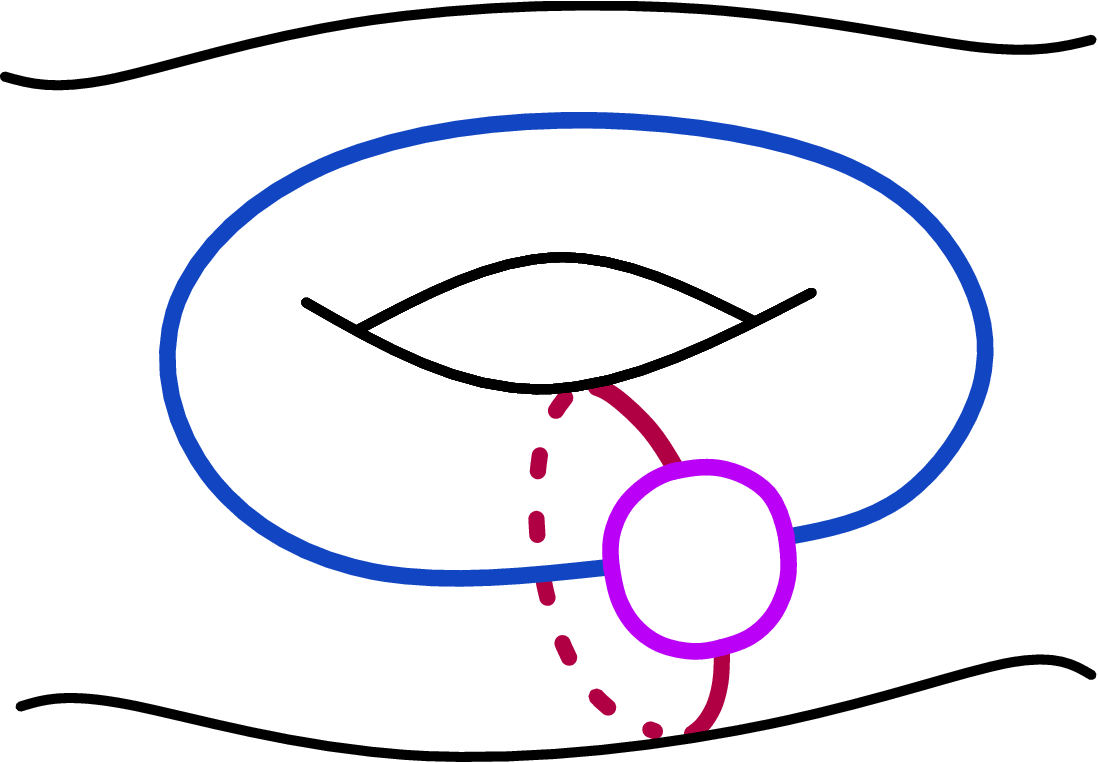}
    \caption{A linked sharing pair.}
\end{figure}

\begin{definition}[Torus triple]
    If $\{c, d\}$ is a  nondegenerate transverse torus pair in $S$, then there is exactly one
    other essential quasicircle $e$ contained in $c \cup d$; the quasicircle $e$ is the closure in $S$ of the symmetric
    difference $c\Delta d$. We also refer to $\{c, d, e\}$ as a {\rm{torus triple}}, since any two
    elements of the triple form a torus pair determining the third.
\end{definition} 

\begin{lemma}\label{Lem:TorusTriplesPreserved}
    Let $\Phi \in \Aut \qcg(S)$. Then $\Phi$ preserves the
    set of torus triples.
\end{lemma}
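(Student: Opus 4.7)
The plan is to deduce the lemma as an immediate consequence of the structural results already established in \S\ref{Subsect:torus}. Recall that a torus triple is by definition $\{c,d,e\}$ where $\{c,d\}$ is a nondegenerate transverse torus pair and $e$ is the (unique) third essential quasicircle contained in $c\cup d$, equivalently the closure of $c\Delta d$. So to show $\Phi$ preserves torus triples, it suffices to show that $\{\Phi(c),\Phi(d)\}$ is again a nondegenerate transverse torus pair, and that $\Phi(e)$ is the unique third essential quasicircle of its hull.

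First I would invoke Proposition \ref{prop-toruspairs}, which tells us that $\Phi$ preserves the set of nondegenerate transverse torus pairs. Hence $\{\Phi(c),\Phi(d)\}$ is a nondegenerate transverse torus pair. By Lemma \ref{Lem:NonDegenNonTransTorusPairClassification}, the hull of such a pair contains exactly three vertices of $\qcg(S)$, namely the two quasicircles of the pair together with one additional quasicircle.

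Second, since $e$ lies in the hull of $\{c,d\}$, Lemma \ref{Lem:AutomorphismsHullPreserved} gives that $\Phi(e)$ lies in the hull of $\{\Phi(c),\Phi(d)\}$. Because $\Phi$ is a graph automorphism it is injective on vertices, so $\Phi(e)\neq \Phi(c),\Phi(d)$, and hence $\Phi(e)$ must be the unique third vertex in the hull of $\{\Phi(c),\Phi(d)\}$. Therefore $\{\Phi(c),\Phi(d),\Phi(e)\}$ is a torus triple.

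There is essentially no obstacle here: the lemma is a direct bookkeeping consequence of the hull characterization of nondegenerate transverse torus pairs together with the preservation of hulls and of the transverse torus pair condition. The only thing to be careful about is that the definition of torus triple singles out $e$ as the \emph{essential} third quasicircle in $c\cup d$, but this coincides with the third vertex of the hull furnished by Lemma \ref{Lem:NonDegenNonTransTorusPairClassification}, so the two descriptions match and the argument goes through.
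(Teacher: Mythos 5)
Your proof is correct and takes essentially the same route as the paper: both rely on Proposition~\ref{prop-toruspairs} to preserve nondegenerate transverse torus pairs and on Lemma~\ref{Lem:AutomorphismsHullPreserved} to pin down $\Phi(e)$ as the unique third vertex in the hull of $\{\Phi(c),\Phi(d)\}$. The only cosmetic difference is that you cite Lemma~\ref{Lem:NonDegenNonTransTorusPairClassification} explicitly for the three-vertex count, whereas the paper phrases the uniqueness in link language, but the underlying argument is identical.
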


\begin{proof}
    Suppose $\{c, d, e\}$ is a torus triple. Then $e$ is the unique vertex (other than $c$ and
    $d$) contained in the hull of $\{c, d\}$. By Lemma \ref{Lem:AutomorphismsHullPreserved}, $e$ is the unique quasicircle whose link contains
    the link of $\{c, d\}$. Since nondegenerate transverse torus pairs are preserved by Proposition~\ref{prop-toruspairs}, it now follows that torus triples are
    preserved.
\end{proof}

\begin{figure}[h]
    \includegraphics[width=1.5in]{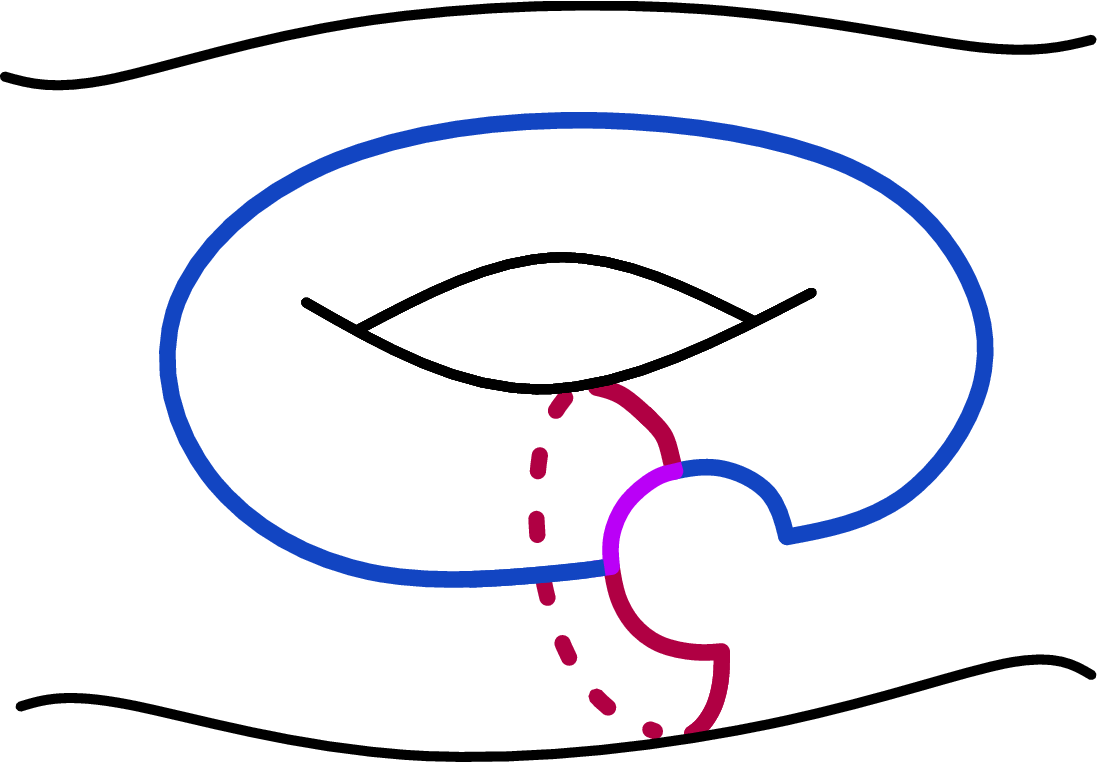}
    \hspace{.25in}
    \includegraphics[width=1.5in]{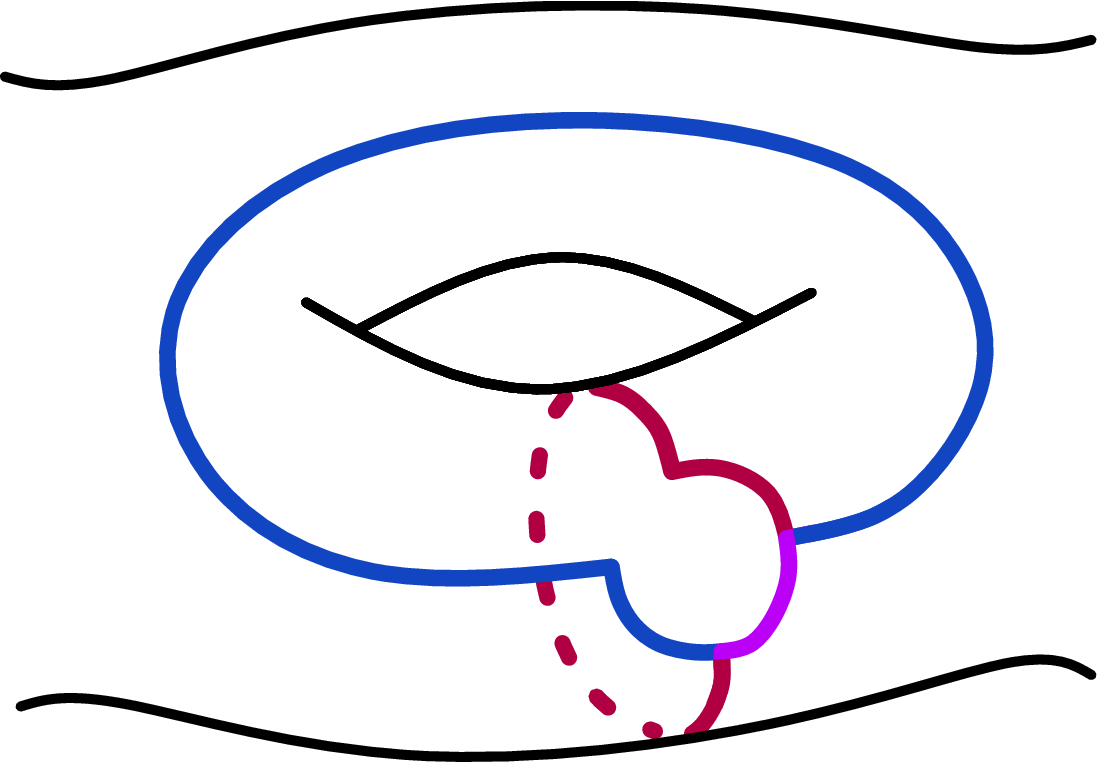}
    \hspace{.25in}
    \includegraphics[width=1.5in]{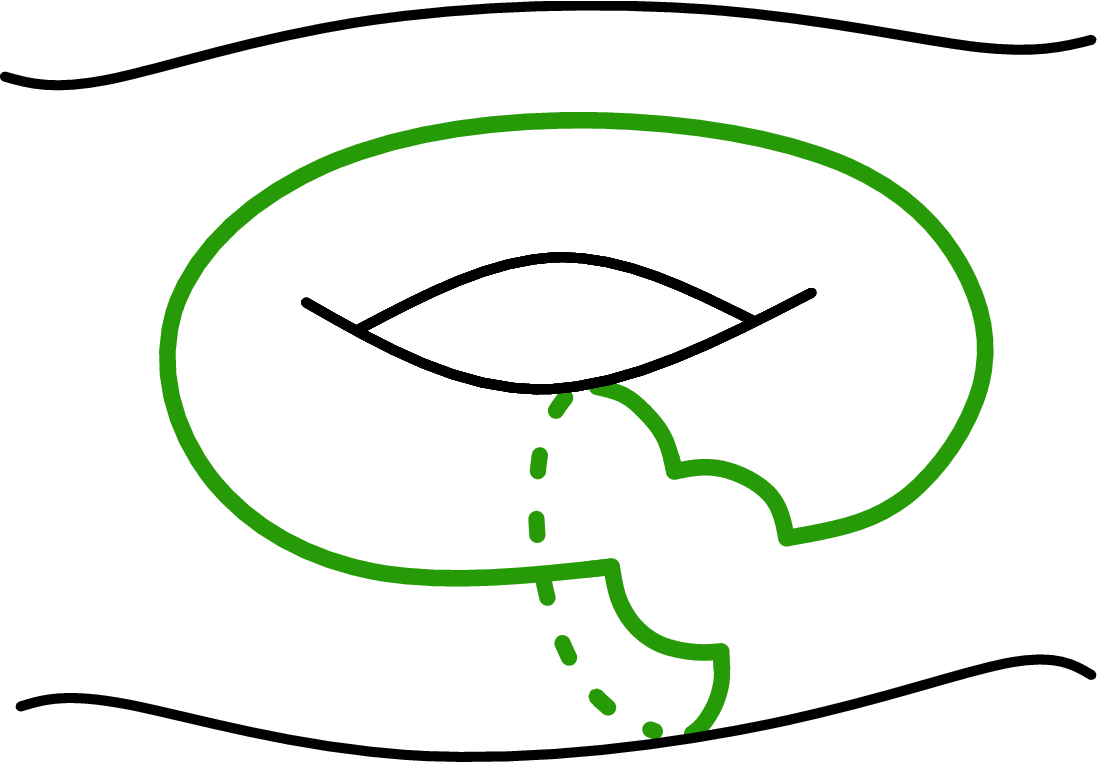}
    \caption{The torus pairs $\{c, d'\}$ and $\{c', d\}$ and the quasicircle that forms a torus triple with both.}
    \label{fig:torustriple1}
\end{figure}

\begin{proposition}\label{Prop:SharingPairsPreserved}
Let $\Phi \in \Aut \qcg(S)$. Then $\Phi$ preserves linked sharing pairs.
\end{proposition}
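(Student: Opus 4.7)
The approach parallels the proof of the corresponding statement for the fine curve graph in \cite{long2023automorphisms}, with each construction carried out in the quasiconformal category using the smoothing tools of \S\ref{s-qcbasics}. By Proposition \ref{Prop:BigonPairsPreserved}, $\Phi$ sends bigon pairs to bigon pairs, so the question reduces to two steps: (i) showing that $\Phi$ takes sharing pairs to sharing pairs, and (ii) showing that $\Phi$ preserves the linked condition among sharing pairs.

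For (i), the key observation is that the inessential quasicircle bounding the disk side of a bigon pair $\{a,b\}$ is intrinsically characterized by the essential quasicircles it separates from $\{a,b\}$. The nondisk component of $S\setminus (a\Delta b)$ is identified by the set of essential quasicircles contained in it, and this set is detectable by disjointness and hull data (Lemma \ref{Lem:AutomorphismsHullPreserved}). Two bigon pairs $\{a,b\}$ and $\{a',b'\}$ therefore share if and only if the collections of essential quasicircles lying on their respective nondisk sides coincide and the four points where the two bigons meet the shared boundary are distinct; this is a condition purely about bigon pairs, hulls, and disjointness, so it is $\Phi$-invariant.

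For (ii), I would characterize linkedness via the existence of a torus triple obtained by recombining the arcs of the four quasicircles. If $\{\{a,b\},\{a',b'\}\}$ is a linked sharing pair, the four endpoints alternate on the shared boundary quasicircle, so swapping the arcs $a\cap b$ and $a'\cap b'$ between the two bigons produces two essential quasicircles whose hull contains a third essential quasicircle, forming a nondegenerate transverse torus triple inside the hull of $\{a,b,a',b'\}$. In the unlinked case, the analogous recombination yields disjoint quasicircles and no such torus triple appears. Linkedness is then a condition on the existence of torus triples inside the hull of $\{a,b,a',b'\}$, and hence by Lemma \ref{Lem:AutomorphismsHullPreserved}, Proposition \ref{Prop:BigonPairsPreserved}, and Lemma \ref{Lem:TorusTriplesPreserved} it is $\Phi$-invariant.

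The main obstacle is step (ii): producing the recombined essential quasicircles genuinely as vertices of $\qcg(S)$. Combining arcs at the endpoints of $a\cap b$ and $a'\cap b'$ creates right-angle turns, so the naive recombination yields only a Jordan curve rather than a quasicircle. This is handled exactly as in the proof of Lemma \ref{Lem:ExistenceBigonPairs}: first apply Theorem \ref{thm-smooth-away-from-bad} to quasiconformally straighten a neighborhood of the shared boundary quasicircle, perform the recombination smoothly in the straightened model, and then pull back. The output is a pair of essential quasicircles with the required intersection pattern, so the torus triple characterization of linkedness is realized in $\qcg(S)$ and the rest of the argument is bookkeeping using $\Phi$-invariance of the ingredients listed above.
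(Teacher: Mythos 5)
Your step (ii) is close in spirit to the paper's argument, but your step (i) has a genuine gap, and the overall decomposition into ``sharing'' then ``linking'' is not what the paper does and is harder to make rigorous.

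The paper proves the proposition by a single combinatorial characterization: bigon pairs $\{c,d\}$ and $\{c',d'\}$ form a linked sharing pair if and only if (1) each of $\{c,c'\}$, $\{c,d'\}$, $\{c',d\}$, $\{d,d'\}$ is a nondegenerate transverse torus pair, (2) some quasicircle forms a torus triple with both $\{c,d'\}$ and $\{c',d\}$, and (3) some quasicircle forms a torus triple with both $\{c,c'\}$ and $\{d,d'\}$. Each condition is manifestly preserved by $\Phi$ via Proposition \ref{Prop:BigonPairsPreserved}, Lemma \ref{Lem:TorusPairClassification}, and Lemma \ref{Lem:TorusTriplesPreserved}. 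The torus triples in (2) and (3) are realized by recombining arcs, as you describe, and the localization Lemma \ref{lemma-bt-localizes} handles quasicircularity at the recombination points — so this part of your step (ii) is in line with the paper. But the paper never separately verifies that $\Phi$ preserves ``sharing'' in isolation; conditions (1)--(3) together encode both that the two bigon pairs determine the same inessential boundary and that the endpoints alternate.

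The gap in your step (i) is that the two conditions you state are not actually reduced to data in $\qcg(S)$. First, ``the set of essential quasicircles lying on the nondisk side of $a\Delta b$'' requires knowing which essential quasicircles are disjoint from the inessential curve $a\Delta b$, but $a\Delta b$ is not a vertex of $\qcg(S)$ and Lemma \ref{Lem:AutomorphismsHullPreserved} concerns membership in hulls, not membership in complementary components of an inessential curve. Indeed, detecting disjointness from a curve that is only determined by a bigon pair is exactly what Lemma \ref{Lem:EdgeBetweenExtendedGraph} later proves, and that lemma sits downstream of this proposition in the logical order (via Corollary \ref{Cor:LinkedQuasiArcGraphConnected} and Proposition \ref{Prop:AutQuasiGraphToAutExtended}). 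Second, ``the four points where the bigons meet the boundary are distinct'' is a statement about a finite configuration on a curve that is not a vertex; you assert it is ``a condition purely about bigon pairs, hulls, and disjointness'' without giving such a translation. The paper's condition (1) — that all four cross pairs be nondegenerate transverse torus pairs — is precisely the combinatorial substitute for this, ruling out degenerate tangencies and coinciding endpoints, and it is also what feeds the final sentence of the paper's proof (a failure of linkedness forces one of those pairs to be disjoint or to intersect in more than one interval). To repair your argument you would need to either supply explicit graph-level detections for the two facts you assume, or abandon the sharing/linking split and adopt the paper's unified torus-pair/torus-triple characterization.
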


\begin{proof}
    We claim that bigon pairs $\{c,d\}$ and $\{c',d'\}$ form a linked sharing pair if and only if the following conditions hold:
    
    \begin{enumerate}
        \item each of $\{c, c'\}$, $\{c,d'\}$, $\{c', d\}$, and $\{d,d'\}$ form nondegenerate transverse torus pairs 
        \item there is a quasicircle that forms a torus triple with both $\{c,d'\}$ and $\{c',d\}$
        \item there is a quasicircle that forms a torus triple with both $\{c,c'\}$ and $\{d,d'\}$
    \end{enumerate}
    The proposition then follows from the claim, Proposition \ref{Prop:BigonPairsPreserved}, Lemma \ref{Lem:TorusPairClassification} and Lemma \ref{Lem:TorusTriplesPreserved}.

    For the forward direction, we can use the constructions indicated by Figures~\ref{fig:torustriple1} and ~\ref{fig:torustriple2}.
    There, the curve $e$, respectfully $e'$, forms a torus triple with $\{c,d'\}$ and $\{c',d\}$, respectfully $\{c,c'\}$ and $\{d,d'\}$.
    We note that this holds since $\{c,d\}$ and $\{c',d'\}$, respectfully $\{c,c'\}$ and $\{d,d'\}$, both form a quasicircle in $S$, the construction of $e$, respectfully $e'$, outlined is also a quasicircle for some quality constant $K$, respectfully $K'$, using the localization Lemma \ref{lemma-bt-localizes}.
    
\begin{figure}[h]
    \includegraphics[width=1.5in]{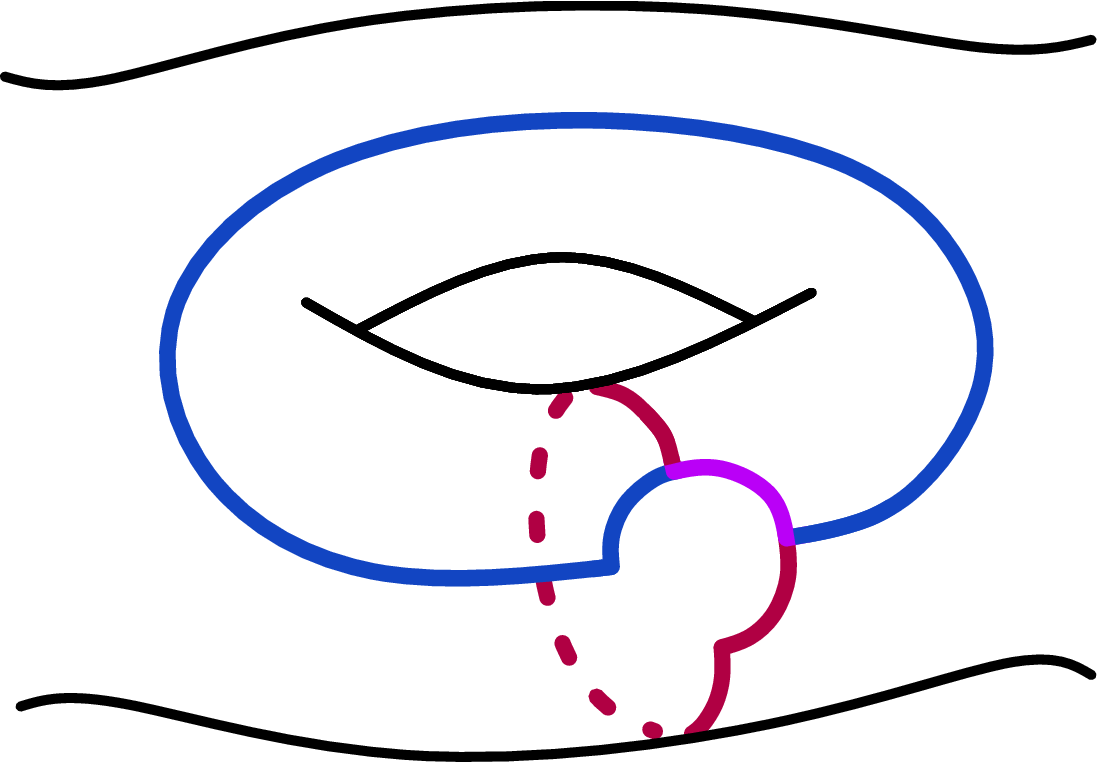}
    \hspace{.25in}
    \includegraphics[width=1.5in]{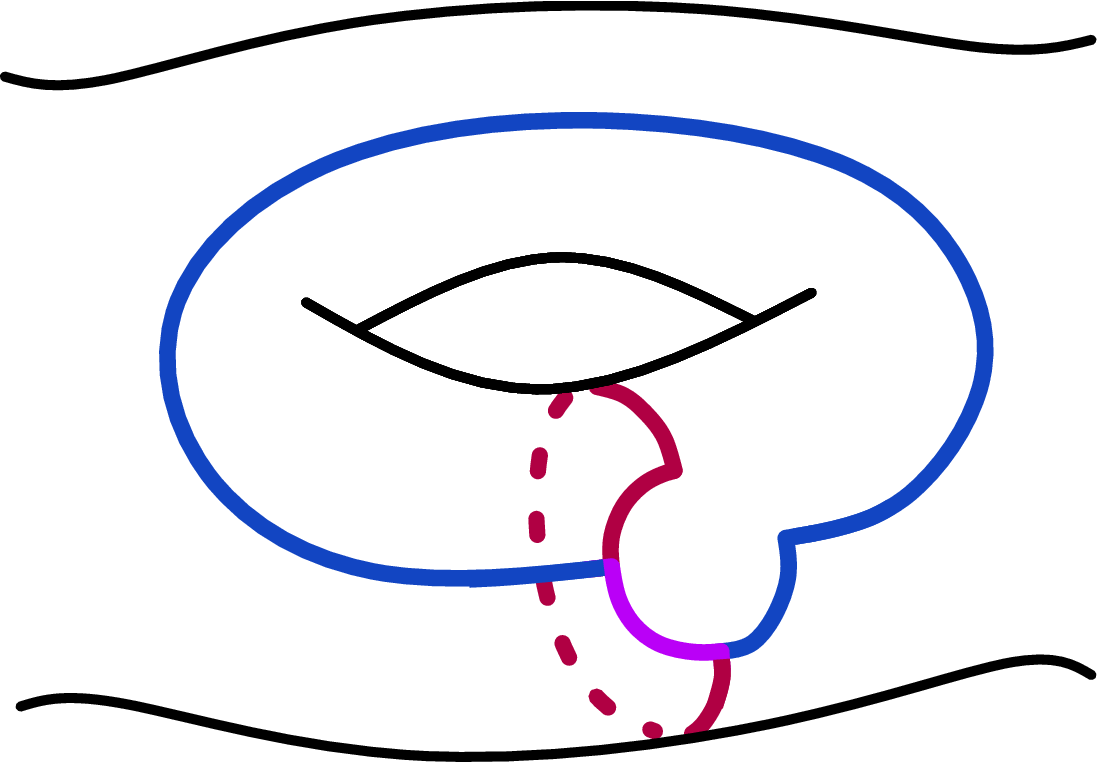}
    \hspace{.25in}
    \includegraphics[width=1.5in]{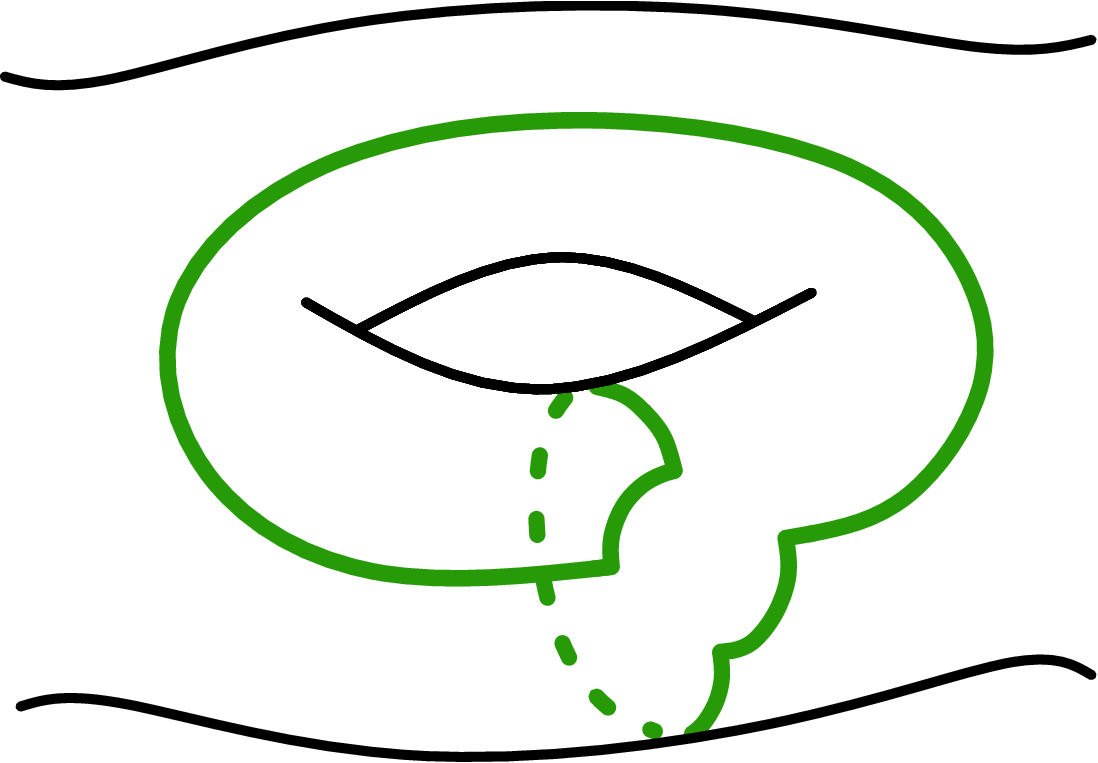}
    \caption{The torus pairs $\{c, c'\}$ and $\{d, d'\}$ and the quasicircle that forms a torus triple with both.}
    \label{fig:torustriple2}
\end{figure}

    For the other direction, we may precisely follow the proof of the analogous statement in ~\cite{long2023automorphisms} (see Proposition 2.2) to show that any quasicircles that satisfy the conditions in the claim form bigon pairs with the same inessential quasicircle. 

    If these constructed bigon pairs are not linked sharing pairs, then at least one of the pairs from condition (1) will either be disjoint or intersect in more than one interval and thus not be a nondegenerate transverse torus pair.
\end{proof}

\subsection{Connectivity of arc graphs}\label{Subsect:ArcGraphs}
We again consider surfaces $S^b_g$ with boundary that are obtained by cutting $S_g$ along a union of disjoint quasicircles. 
Our goal is to prove the linked quasi-arc graph $\lqag\left(S^b_g\right)$, defined below, is connected for $g\geq 1$ and $b>0$. The proofs in this section mirror similar results in \cite{long2023automorphisms}.

\subsection*{The quasi-arc graph}
We first define the quasi-arc graph, denoted $\qag(S)$.
The vertices of this graph are simple proper arcs in $S_g^b$ relative to the boundary, where the extension of the arc along either of the subarcs of the boundary gives an  essential quasicircle in the closed surface $S$. Two vertices are connected by an edge if the two arcs are disjoint.
Note that the same argument as in Lemma \ref{Lem:ExistenceBigonPairs} shows $\qag(S)$ is non-empty.

\begin{proposition}\label{Prop:QuasiArcGraphConnected}
    For any $S = S_g^b$ with $g\geq1$ and $b>0$, the quasi-arc graph $\qag(S)$ is connected.
\end{proposition}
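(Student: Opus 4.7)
The plan is to argue by induction on the geometric intersection number $i(\alpha,\beta)$ of two quasi-arcs $\alpha,\beta\in\qag(S)$, realized at a minimum over transverse representatives. The base case $i(\alpha,\beta)=0$ is immediate, since $\alpha$ and $\beta$ are then joined by an edge of $\qag(S)$. For the inductive step, I would produce an intermediate quasi-arc $\gamma\in\qag(S)$ with $i(\gamma,\alpha)<i(\alpha,\beta)$ and $i(\gamma,\beta)<i(\alpha,\beta)$; the inductive hypothesis then yields paths from $\alpha$ to $\gamma$ and from $\gamma$ to $\beta$ in $\qag(S)$.

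To construct $\gamma$, I would first reduce to the case where the endpoints of $\alpha$ and of $\beta$ all lie on a single boundary component of $S_g^b$. If this fails, I would attach $\beta$ to an auxiliary quasi-arc whose endpoints lie on $\beta$'s boundary component and which is disjoint from $\alpha$, produced by the construction of Lemma~\ref{Lem:ExistenceBigonPairs} applied to a quasicircle parallel to a boundary component and disjoint from $\alpha$. In the same-component case, I would pick an intersection point $p\in\alpha\cap\beta$, decompose $\alpha=\alpha_1\cup\alpha_2$ and $\beta=\beta_1\cup\beta_2$ at $p$, and form the concatenation $\alpha_i\cup\beta_j$ for suitable $i,j\in\{1,2\}$. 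Smoothly round the right-angle corner at $p$ and push this concatenation off $\alpha\cup\beta$ in a small neighborhood of $p$ to obtain an embedded arc $\gamma$. A standard counting argument then shows that for an appropriate choice of $i,j$ the intersections with $\alpha$ and $\beta$ strictly decrease.

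The essential point is verifying that $\gamma\in\qag(S)$, i.e.~that both boundary extensions of $\gamma$ are essential quasicircles in the closed surface. Essentiality is immediate from the topology of the surgery, so it suffices to check quasicircularity. Fix one of the two boundary subarcs determined by the endpoints of $\gamma$ and let $\widehat\gamma$ denote the resulting extension. Cover $\widehat\gamma$ by open sets as follows: away from the smoothing region near $p$, $\widehat\gamma$ locally coincides with either the extension of $\alpha$ or the extension of $\beta$ (along an appropriate subarc of the common boundary component), and thus inherits the metric bounded turning condition from those quasicircles; inside the smoothing region, $\widehat\gamma$ is a smoothly embedded arc and so satisfies the bounded turning condition on any small enough neighborhood. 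The non-uniform localization Lemma~\ref{lemma-bt-localizes} then implies that $\widehat\gamma$ is a quasicircle. The same argument applies to the other boundary extension, so $\gamma$ is a quasi-arc.

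The main obstacle is the reduction when $\alpha$ and $\beta$ have endpoints on distinct boundary components of $S_g^b$, since a direct surgery then produces an arc running between two different boundary components, which is not a quasi-arc in our sense. Bridging this with auxiliary quasi-arcs while maintaining the strict decrease of intersection numbers at each step requires some care, but is routine given Lemma~\ref{Lem:ExistenceBigonPairs} to produce the needed quasi-arcs and Lemma~\ref{lemma-bt-localizes} to verify the quasicircle property of their boundary extensions. The hypothesis $g\geq 1$ is used throughout to ensure the existence of essential quasicircles bounding the relevant boundary-parallel regions.
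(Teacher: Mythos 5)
Your approach---induction on geometric intersection number via arc surgery---is genuinely different from the paper's. The paper maps $\qag(S)$ to the ordinary arc graph $\mathcal{A}(S)$ by taking isotopy classes, invokes the known connectivity of $\mathcal{A}(S)$, and then reduces to showing that isotopic quasi-arcs are connected in $\qag(S)$ via the isotopy extension theorem and a density/perturbation argument. Your surgery route is a reasonable alternative in outline, and using Lemma~\ref{lemma-bt-localizes} to certify the bounded-turning condition of a surgered arc is the right tool.

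There is, however, a genuine gap at the sentence ``Essentiality is immediate from the topology of the surgery.'' It is not. A vertex of $\qag(S)$ is an arc both of whose boundary extensions are essential quasicircles in the \emph{closed} surface $S$---a ``surviving'' condition---and a concatenation $\alpha_i\cup\beta_j$ followed by a push-off can easily cut off a disk in $S_g^b$ (so one extension bounds a disk in $S$), or yield an arc whose extension bounds a disk in $S$ passing through the filled-in boundary disk. In surgery proofs of connectivity of curve and arc graphs one must choose the intersection point (typically an innermost one) and the side of the surgery carefully, and argue from minimal position and the absence of bigons that at least one of the two candidate arcs is essential; with the surviving condition this needs a further check, and you have not given it. Without it the inductive step can produce something that is not a vertex of $\qag(S)$. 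A second, smaller issue: near the surgery point you cannot ``smoothly round the right-angle corner,'' since quasi-arcs need not be smooth or even rectifiable there; a quasi-arc can approach $p$ with no tangent direction, so the pushed-off junction must be built with the same care the paper uses for Lemma~\ref{Lem:ExistenceBigonPairs} (conformally straightening and invoking Tukia's extension) rather than by an appeal to smoothness. Finally, your reduction to the case where all four endpoints lie on a single boundary component is only sketched; bridging through a boundary-parallel auxiliary quasi-arc is the right idea, but as written it mixes this reduction with the intersection-number induction in a way that should be untangled before the argument is complete.
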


\begin{proof}
    There is a natural surjection from $\qag(S)$ to the arc graph $\mathcal{A}(S)$ by taking isotopy classes.
    Now, $\mathcal{A}(S)$ is known to be connected when the surface has nontrivial genus, for instance because the arc complex is contractible (see \cite{hatcher1991arcgraph}).
    From the definition of $\mathcal{A}(S)$, we need only show that there is a path between any two homotopic vertices $a,b$ of $\qag(S)$.
    
    There exists an isotopy $H : I \times I \to S$ from $a$ to $b$.
    Let $c$ be a quasi-arc which is disjoint from $a$.
    By the isotopy extension property \cite[Corollary 1.4]{edwardsKirby1971deformations}, we can extend $H$ to be an isotopy on the entire surface, which implies $H$ will also be an isotopy from $c$ to some arc $d$, where $d$ may not be a quasi-arc.

    Note that for any $t\in I$, the arcs $H_{t}(a)$ and $H_{t}(c)$ might not be quasi-arcs, but they must be disjoint. Since $H_{t}(a)$ is contained in the open set $S\setminus H_{t}(c)$ and smooth arcs (and thus quasi-arcs) are dense away from the boundary, by Lemma~\ref{Lem:ExistenceBigonPairs} there exists a quasi-arc $a_t$ in some small neighborhood of $H_{t}(a)$ that is disjoint from $H_{t}(c)$. Similarly, we can find a quasi-arc $c_t$ in a small neighborhood of $H_{t}(c)$ that is disjoint from both $a_t$ and $H_{t}(a)$.

    For any $t \in I$ there is a neighborhood $U$ of $t$ so that the curve $H_{t'}(a)$ is disjoint from $c_t$ for any $t' \in U$.
    So by compactness, we may partition the interval $I$ into finitely many pieces $I = \{t_0 = 0, t_1, t_2, \ldots, t_k = 1\}$ such that $c_{t_i}$ and $H_{t_{i+1}}(a)$ are disjoint. Thus the quasi-arc $a_{t_{i+1}}$ can also be made disjoint from $c_{t_i}$. 

    Thus $$\{a=a_0, c=c_0, a_{t_1}, c_{t_1}, \ldots, a_{t_i}, c_{t_i}, a_{t_{i+1}}, \ldots, c_{t_{k-1}}, a_{t_k}=b\} $$ is a path in $\qag(S)$ between any arbitrary homotopic quasi-arcs $a$ and $b$. Thus $\qag(S)$ is connected.
\end{proof}

\subsection*{The nonseparating quasi-arc graph}
We say that a quasi-arc in a surface $S$ is nonseparating if its complement in $S$ is connected. The nonseparating quasi-arc graph, denoted $\nqag(S)$, is the subgraph of $\qag(S)$ spanned by the nonseparating quasi-arcs.

\begin{corollary}\label{Cor:NonSeparatingQuasiArcGraphConnected}
    For any $S = S_g^b$ with $g\geq1$ and $b>0$, the nonseparating quasi-arc graph $\nqag(S)$ is connected.
\end{corollary}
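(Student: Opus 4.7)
The plan is to mirror the proof of Proposition~\ref{Prop:QuasiArcGraphConnected} while restricting every step to non-separating arcs. Taking isotopy classes gives a natural surjection from $\nqag(S)$ onto the classical non-separating arc graph on $S = S_g^b$, which is known to be connected when $g \geq 1$ and $b > 0$ by standard surgery arguments paralleling those used for the non-separating curve graph. Just as in the previous proposition, this reduces the problem to showing that any two \emph{homotopic} non-separating quasi-arcs $a$ and $b$ can be connected by a path in $\nqag(S)$.

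For the homotopic case I would run the isotopy-extension construction from Proposition~\ref{Prop:QuasiArcGraphConnected} essentially verbatim, with one extra constraint: the auxiliary quasi-arc $c$ disjoint from $a$ that seeds the construction must itself be non-separating. Such a $c$ can be produced by first finding a non-separating topological arc in $S$ disjoint from $a$ (which exists because cutting $S_g^b$ along a non-separating arc yields a surface that still supports non-separating arcs whenever $g \geq 1$), and then approximating it by a quasi-arc within a small neighborhood using the local quasiconformal-smoothing technique underlying Lemma~\ref{Lem:ExistenceBigonPairs}. Since a sufficiently close approximation lies in the same isotopy class as the topological arc it approximates, and being non-separating is an isotopy invariant, the resulting quasi-arc is non-separating.

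With this non-separating $c$ in hand, I would apply the isotopy extension property to promote $H$ to an ambient isotopy and then run the compactness argument from Proposition~\ref{Prop:QuasiArcGraphConnected} to produce a finite path $\{a, c, a_{t_1}, c_{t_1}, \ldots, a_{t_{k-1}}, c_{t_{k-1}}, b\}$ in $\qag(S)$. Each $a_{t_i}$ is chosen in a small neighborhood of $H_{t_i}(a)$ and so lies in the isotopy class of $a$; since $a$ is non-separating, $a_{t_i}$ is non-separating, and the same reasoning applied to $c$ shows each $c_{t_i}$ is non-separating as well. Hence the entire path lies in $\nqag(S)$, which establishes connectivity.

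The main obstacle I expect is the careful production of the non-separating auxiliary quasi-arc $c$ disjoint from $a$; every other step of the argument is a direct transcription of Proposition~\ref{Prop:QuasiArcGraphConnected}. The existence of a non-separating topological arc disjoint from $a$ is a classical surface-topology fact but may require a small case analysis in low-complexity settings (such as $(g,b) = (1,1)$), after which the promotion to a quasi-arc via the smoothing results of \S\ref{s-qcbasics} is routine.
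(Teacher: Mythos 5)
Your proof is correct but takes a genuinely different route from the paper's. The paper's argument is a short path-surgery deduction from Proposition~\ref{Prop:QuasiArcGraphConnected}: it starts with a path in $\qag(S)$ between the two given nonseparating quasi-arcs and then removes or replaces each separating vertex $v$ one at a time --- if the two path-neighbors of $v$ lie on opposite sides of $v$ they are already disjoint and $v$ is deleted, and if they lie on the same side then there is a nonseparating quasi-arc on the other side of $v$, disjoint from both, that can be substituted for $v$. Each modification reduces the number of separating vertices in the path, so the process terminates with a path in $\nqag(S)$. Your proposal instead re-runs the entire proof of Proposition~\ref{Prop:QuasiArcGraphConnected} from scratch with the ``nonseparating'' constraint carried along: you invoke connectivity of the classical nonseparating arc graph (a correct but additional input the paper never needs to cite), and then rerun the isotopy-extension/compactness argument after arranging the seed quasi-arc $c$ to be nonseparating. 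The parts you flagged as delicate --- producing a nonseparating $c$ disjoint from $a$, and checking that the intermediate $a_{t_i}$ and $c_{t_i}$ remain nonseparating because they lie in small neighborhoods of arcs isotopic to $a$ and $c$ --- are indeed the points that need care, and your handling of them is sound. The trade-off is that the paper's proof is considerably shorter and uses only what has already been established, while yours is more self-contained within the nonseparating world but imports a nontrivial fact about the classical nonseparating arc graph and duplicates a page of argument.
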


\begin{proof} 
    This can be seen by considering the neighbors of any separating vertex in a path in $\qag(S)$ between two nonseparating quasi-arcs. If these neighbors are on separate sides of the separating vertex, then they must be disjoint, and this vertex can be removed from the path. Otherwise, if the neighbors instead reside on the same side, then there exists a nonseparating quasi-arc on the other side that is disjoint from both. Thus the separating vertex can be replaced by a nonseparating vertex.
\end{proof}

\subsection*{The linked quasi-arc graph}
 Let $d_0$ be a distinguished component of $\partial S$.  We say that $a, b \in \qag(S)$ are \textit{linked} at $d_0$ if all endpoints of $a, b$ are on $d_0$ and the endpoints of $a$ separate the endpoints of $b$ in $d_0$.

We define the linked quasi-arc graph $\lqag(S,d_0)$ to be the graph whose vertices are nonseparating quasi-arcs in $S$ with both endpoints at $d_0$ and whose edges connect disjoint quasi-arcs that are linked at $d_0$. When convenient we suppress $d_0$ in the notation and write $\lqag(S)$.

\begin{corollary}\label{Cor:LinkedQuasiArcGraphConnected}
    For any $S = S_g^b$ with $g\geq1$ and $b>0$, the linked quasi-arc graph $\lqag(S)$ is connected.
\end{corollary}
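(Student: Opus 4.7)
The plan is to deduce connectivity of $\lqag(S)$ from that of $\nqag(S)$, which is already established in Corollary~\ref{Cor:NonSeparatingQuasiArcGraphConnected}. Given two vertices $a,b \in \lqag(S)$, I would start with a path $a = c_0, c_1, \ldots, c_n = b$ in $\nqag(S)$ of pairwise disjoint consecutive nonseparating quasi-arcs, and modify it in two stages: first arranging each intermediate vertex to have both endpoints on the distinguished component $d_0$, and then inserting extra vertices to realize the linking condition.

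For the endpoint-placement stage, I would proceed inductively along the path. Assuming $c_0,\ldots,c_{i-1}^{*}$ have been arranged to have endpoints on $d_0$ while preserving the disjointness of consecutive arcs, I would replace $c_i$ by a nonseparating quasi-arc $c_i^{*}$ based at $d_0$ that is disjoint from both $c_{i-1}^{*}$ and $c_{i+1}$. To see that such an arc exists, cut $S$ open along the nonseparating arcs $c_{i-1}^{*}$ and $c_{i+1}$; because $g \geq 1$ and $b > 0$, the resulting surface still has positive genus and contains subarcs of $d_0$ on its boundary, so it admits smooth nonseparating proper arcs with both endpoints on those subarcs. Approximating such a smooth arc by a quasi-arc using the construction of Lemma~\ref{Lem:ExistenceBigonPairs} in a small tubular neighborhood gives $c_i^{*}$; the endpoints in $d_0$ can be slid along $d_0$ into any desired positions without creating intersections with $c_{i-1}^{*}$ or $c_{i+1}$.

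For the linking stage, it suffices to show that any two disjoint nonseparating quasi-arcs $c, c'$ with all four endpoints on $d_0$ can be joined in $\lqag(S)$ by inserting a single intermediate vertex when they are not already linked. In the unlinked case, cutting $S$ along $c \cup c'$ yields a connected surface of positive genus whose boundary contains several subarcs of $d_0$; in such a cut surface one can produce a simple proper nonseparating arc $e$ whose two endpoints lie on subarcs of $d_0$ chosen so that, back in $S$, the endpoints of $e$ separate those of $c$ and also those of $c'$ along $d_0$. Again Lemma~\ref{Lem:ExistenceBigonPairs} allows us to promote this smooth arc to a quasi-arc. Then $c, e, c'$ forms a length-two path in $\lqag(S)$, and iterating this insertion along the (now endpoint-corrected) path completes the proof.

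The main obstacle I expect is the linking step: one must verify that inside the cut surface $S \setminus (c \cup c')$ there is a nonseparating proper arc whose endpoints fall on the correct two subarcs of $d_0$ to produce the required linking at $d_0$ with \emph{both} $c$ and $c'$ simultaneously. This is a careful case analysis depending on how the four endpoints of $c \cup c'$ divide $d_0$ and how $c, c'$ subdivide $S$, and leverages the positive genus of the complement. The endpoint-placement step, by contrast, is essentially a standard existence-of-arcs argument in a cut surface, combined with a quasi-arc approximation that is by now routine via Lemma~\ref{Lem:ExistenceBigonPairs}.
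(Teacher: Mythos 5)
Your overall strategy matches the paper's: reduce to the connectivity of $\nqag(S)$ (Corollary~\ref{Cor:NonSeparatingQuasiArcGraphConnected}) and then repair a path so that consecutive vertices are linked at $d_0$. Your added endpoint-placement stage is sensible extra care: the paper passes over this step silently (it is vacuous when $b=1$, the only case used downstream), so making it explicit is a reasonable addition.

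The genuine gap is in your linking stage. You assert that ``cutting $S$ along $c \cup c'$ yields a connected surface of positive genus,'' and neither claim is reliable. Two disjoint nonseparating arcs can jointly separate (the paper is careful to say $S \setminus (a \cup b)$ has \emph{at most two} components), and cutting along two arcs can also kill the genus. More importantly, the precise content that makes the argument work --- the observation that the subarc of $d_0$ between the endpoints of $a$ not containing the endpoints of $b$, and the subarc of $d_0$ between the endpoints of $b$ not containing the endpoints of $a$, always lie in the \emph{same} component of $S \setminus (a \cup b)$ --- is exactly what you flag as ``a careful case analysis'' and leave undone. That observation is the crux: once it is in hand, a quasi-arc $e$ running between those two subarcs is automatically linked with both $a$ and $b$, and the rest is the approximation-to-a-quasi-arc step you already have. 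Without it, you haven't shown the intermediate vertex exists, so the corollary is not yet proved.
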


\begin{proof}
    Since $\nqag(S)$ is connected by Corollary~\ref{Cor:NonSeparatingQuasiArcGraphConnected}, we only thing need to  check that there is a path between any two $a, b \in \lqag(S)$ that are not linked at $d_0$. Since $a$ and $b$ are both nonseparating, then $S\setminus \{a \cup b\}$ has at most two components. Notably, the component of $d_0$ between the endpoints of $a$ not containing the endpoints of $b$ and the component of $d_0$ between the endpoints of $b$ not containing the endpoints of $a$ must both be in the same component. So there is a quasi-arc $c$ with endpoints in each of these components of $d_0$, and hence is linked with both $a$ and $b$.
\end{proof}

\section{Building the quasiconformal map}\label{Section:BuildQCFromGraph}

This section has three parts. First, we use the graph information from \S\ref{Section:TopologyToGraphs} to extend an automorphism of our original quasicircle graph to an automorphism of the extended quasicircle graph. Second, we use the automorphism of the extended graph to build a homeomorphism of the surface. Finally, we show that this homeomorphism is also a quasiconformal map.

As in \S \ref{Section:TopologyToGraphs},  $S$ is a closed oriented surface of genus $g \geq 2$ unless otherwise specified.

\subsection{From essential to inessential}

Our goal is to define an extended version of the quasicircle graph which serves as a tool to prove that the group of automorphisms of the quasicircle graph embeds into $\mathrm{Homeo}(S)$.

\begin{definition}[Extended quasicircle graph]
    Let $\eqcg(S)$ be the graph with vertices
    all quasicircles in $S$ and edges between vertices corresponding to disjoint quasicircles.
\end{definition}

The goal of this subsection is to prove the following result:
\begin{proposition}\label{Prop:AutQuasiGraphToAutExtended}
    Let $\Phi \in \Aut \qcg(S)$. Then there exists a $\overline{\Phi} \in \Aut \eqcg(S)$ such that the restriction of $\overline{\Phi}$ to the essential vertices is exactly $\Phi$.
\end{proposition}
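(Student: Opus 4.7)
The plan is to prescribe $\overline{\Phi}$ on inessential quasicircles via the bigon-pair machinery developed in \S\ref{Subsect:bigon}--\S\ref{Subsect:sharing}. On essential vertices I declare $\overline{\Phi} = \Phi$. Given an inessential quasicircle $\gamma$, Lemma \ref{Lem:ExistenceBigonPairs} applied at two points of $\gamma$ produces a pair of essential quasicircles $\{\eta_1, \eta_2\}$; using the freedom in the extension step of that proof together with the hypothesis $g \geq 2$, I may arrange $\{\eta_1, \eta_2\}$ to be a \emph{nonseparating} bigon pair whose associated disk boundary is $\gamma$. By Proposition \ref{Prop:BigonPairsPreserved}, $\{\Phi(\eta_1), \Phi(\eta_2)\}$ is again a nonseparating bigon pair, and I set $\overline{\Phi}(\gamma)$ to be the inessential quasicircle bounding its disk component.

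The main obstacle is well-definedness: two nonseparating bigon-pair representatives $\{a,b\}$ and $\{a',b'\}$ of the same $\gamma$ need not form a linked sharing pair, so Proposition \ref{Prop:SharingPairsPreserved} does not apply directly. The plan is to interpolate using the linked quasi-arc graph. Cut $S$ along $\gamma$ to form the one-holed surface $S^1_g$. Each nonseparating bigon pair determining $\gamma$ is encoded by its intersection arc $a \cap b$, which is a nonseparating quasi-arc in $S^1_g$ with both endpoints on $\partial S^1_g = \gamma$, and two such bigon pairs form a linked sharing pair precisely when their intersection arcs are disjoint and linked at $\gamma$, i.e.\ adjacent in $\lqag(S^1_g, \gamma)$. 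Corollary \ref{Cor:LinkedQuasiArcGraphConnected} shows this graph is connected, so any two bigon-pair representatives of $\gamma$ can be joined by a chain of bigon pairs in which consecutive terms form linked sharing pairs. Iterated application of Proposition \ref{Prop:SharingPairsPreserved} then forces the $\Phi$-images of these bigon pairs to determine a single inessential quasicircle, namely $\overline{\Phi}(\gamma)$.

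For bijectivity, the same recipe applied to $\Phi^{-1}$ furnishes a two-sided inverse. For edge preservation, I exploit that the construction in Lemma \ref{Lem:ExistenceBigonPairs} can be carried out inside an arbitrarily small annular neighborhood of $\gamma$: if $\alpha \in \eqcg(S)$ is disjoint from $\gamma$, I select a bigon pair $\{a,b\}$ for $\gamma$ inside a neighborhood of $\gamma$ disjoint from $\alpha$ (and in the inessential case, also disjoint from a similarly localized bigon pair representing $\alpha$). Then $\{\Phi(a), \Phi(b)\}$ is disjoint from $\Phi(\alpha)$, and since $\overline{\Phi}(\gamma)$ sits inside the closed disk bounded by this image bigon pair, $\overline{\Phi}(\gamma)$ is disjoint from $\overline{\Phi}(\alpha)$. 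The reverse implication follows by applying the argument to $\Phi^{-1}$, which completes the verification that $\overline{\Phi} \in \Aut\, \eqcg(S)$.
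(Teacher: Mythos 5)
Your definition of $\overline{\Phi}$, the well-definedness argument via connectivity of $\lqag(S^1_g,\gamma)$ together with iterated application of Proposition \ref{Prop:SharingPairsPreserved}, and the bijectivity argument via $\Phi^{-1}$ all match the paper's proof in substance.

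However, the edge-preservation step has a genuine gap in the case of two disjoint \emph{nested} inessential quasicircles. You claim that if $\alpha$ and $\gamma$ are disjoint inessential quasicircles you can choose bigon pairs $\{a,b\}$ for $\gamma$ and $\{a',b'\}$ for $\alpha$ that are disjoint from one another. This is impossible when, say, $\alpha$ lies inside the disk bounded by $\gamma$: the bigon-pair curves $a',b'$ for $\alpha$ are essential, so they must leave that disk, and in doing so they must cross $\gamma$ — but $\gamma \subset a \cup b$, so $\{a',b'\}$ necessarily intersects $\{a,b\}$. (This is precisely why Lemma \ref{Lem:EdgeBetweenExtendedGraph} treats the nested case with a different, asymmetric criterion: for \emph{every} bigon pair $\{c,d\}$ for the outer curve, there is a bigon pair $\{c',d'\}$ for the inner one disjoint from $c$ alone — not from both $c$ and $d$.) Without an argument covering the nested case, you cannot conclude that $\overline{\Phi}$ preserves all edges of $\eqcg(S)$. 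The paper avoids this issue by establishing Lemma \ref{Lem:EdgeBetweenExtendedGraph} first, which provides a $\qcg(S)$-intrinsic characterization of each of the three kinds of edges (inessential--inessential non-nested, inessential--inessential nested, essential--inessential) and then simply observing that $\Phi$ preserves each characterization. Replacing your ad hoc localization argument with an appeal to that lemma (or proving its nested case directly) would close the gap.
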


Before we can give a description of this extended automorphism, we must first  describe the edges of $\eqcg(S)$ using only information from $\qcg(S)$. This is accomplished in the following lemma.

\begin{lemma}\label{Lem:EdgeBetweenExtendedGraph}
Edges in $\eqcg(S)$ can be differentiated in $\qcg(S)$ by the following constructions:
    \begin{enumerate}
        \item Two inessential quasicircles $e$ and $f$ are disjoint and not nested if and only if there exist disjoint bigon pairs $\{c, d\}$ for $e$ and $\{c',d'\}$ for $f$.
        \item Two inessential quasicircles $e$ and $f$ are disjoint and nested if and only if, up to relabeling, for any bigon pair $\{c, d\}$ for $e$, there exists a bigon pair $\{c',d'\}$ for $f$ disjoint from $c$.
        \item An essential quasicircle $a$ and inessential quasicircle $e$ are disjoint if and only if there exists a bigon pair $\{c, d\}$ for $e$ that is disjoint from $a$.
    \end{enumerate}
\end{lemma}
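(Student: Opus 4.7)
The plan is to prove each of (1), (2), (3) separately, with forward directions as explicit arc-routing constructions applying Lemma~\ref{Lem:ExistenceBigonPairs}, and backward directions exploiting the key containment $e \subset c \cup d$ for any bigon pair $\{c,d\}$ for an inessential quasicircle $e$ (so that disjointness of the bigon pair union from some set immediately transfers to $e$).

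For (3), the backward direction is immediate: if $\{c,d\}$ is disjoint from $a$ then so is $e \subset c\cup d$. For the forward direction, since $a$ is essential and disjoint from $e$, it cannot lie in the disk $D_e$ bounded by $e$, so $a \subset S \setminus \overline{D_e}$. Because $g \geq 2$, the component of $S \setminus (a \cup \overline{D_e})$ adjacent to $e$ has positive genus; I would pick two points $p_1, p_2 \in e$ and an arc from $p_1$ to $p_2$ in this complement wrapping around a handle, then apply Lemma~\ref{Lem:ExistenceBigonPairs} to obtain a bigon pair $\{c,d\}$ for $e$ disjoint from $a$.

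For (1), the backward direction uses the containments $e \subset c \cup d$ and $f \subset c' \cup d'$ to give $e \cap f = \emptyset$; if $e$ were nested inside $D_f$, then the essential curve $c$ would have to exit $D_f$ and thereby intersect $f \subset c' \cup d'$, contradicting disjointness of the bigon pairs. The forward direction exploits the disjoint closed disks $\overline{D_e}, \overline{D_f}$ together with the availability of multiple handles (from $g \geq 2$) to route two auxiliary arcs $\alpha$ and $\alpha'$ around separate handles of $S$, yielding disjoint bigon pairs via Lemma~\ref{Lem:ExistenceBigonPairs}. For (2), the forward direction starts from the nested configuration, WLOG $D_e \subset D_f$: given any bigon pair $\{c,d\}$ for $e$, the intersection $c \cap f$ is a finite set of points on $f$, so I would choose the partition points of $f$ underlying the new bigon pair so that one arc $\gamma_1'$ of $f$ avoids $c \cap f$, and route the shared arc $\alpha'$ around a handle disjoint from the parts of $c$ lying outside $D_f$. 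The resulting curve $c' = \alpha' \cup \gamma_1'$ of the bigon pair $\{c', d'\}$ for $f$ is then disjoint from $c$. The backward direction mirrors (1), using the containment argument to force disjointness of $e$ and $f$ and then contrasting with the characterization in (1) to conclude nesting.

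The main obstacle is the backward direction of (2), because the one-sided disjointness hypothesis is weaker than that of (1) and must simultaneously rule out the intersecting configuration and the non-nested configuration. This requires careful use of essentiality of the bigon curves together with the containment $e \subset c \cup d$, $f \subset c' \cup d'$, combined with comparison to (1). All remaining parts reduce to standard arc-routing on surfaces of genus at least two and direct application of Lemma~\ref{Lem:ExistenceBigonPairs}.
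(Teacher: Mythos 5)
Your overall decomposition (forward/backward for each of (1)--(3), the key containment $e \subset c \cup d$, essentiality of bigon curves, arc-routing via genus $\geq 2$) matches the paper's approach, and your treatments of (1) and (3) are sound. The issue is (2).

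\textbf{Forward direction of (2): the labeling is backwards and the argument fails.} You take $D_e \subset D_f$, i.e.\ $e$ lies inside the disk $D_f$ bounded by $f$. Then for \emph{any} bigon pair $\{c,d\}$ for $e$, each of $c, d$ contains an arc of $e$ and hence enters $D_f$; being essential, each must exit $D_f$ and therefore meets $f$. Since $f \subset c' \cup d'$ for every bigon pair $\{c',d'\}$ of $f$, no bigon pair for $f$ can be disjoint from $c$ (or from $d$). Your construction produces only one curve $c' = \alpha' \cup \gamma_1'$ disjoint from $c$; the companion $d' = \alpha' \cup \gamma_2'$ contains the other arc $\gamma_2'$ of $f$, which by construction contains $c \cap f$, so $d' \cap c \neq \emptyset$. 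In other words, with your labeling the condition in (2) can never hold, which is why the paper labels the other way: $D_f \subset D_e$, i.e.\ $f$ nested in the disk bounded by $e$. With that choice, $f$ lies in the interior of the bigon disk, which is a complementary component of $c \cup d$, so $c$ is automatically disjoint from $f$; one then builds $\{c', d'\}$ for $f$ entirely inside $S \setminus c$ (a connected surface of genus $g-1 \geq 1$ since $c$ is nonseparating), with $c', d'$ exiting $D_e$ through $d$ rather than through $c$.

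\textbf{Backward direction of (2): the plan is too vague to carry.} You propose ``containment for disjointness, then contrast with (1) for nesting.'' For disjointness, the containment alone does not suffice: (2) only guarantees that, for each bigon pair $\{c,d\}$, \emph{one} of the curves ($c$) avoids $c' \cup d' \supset f$, so a priori $f$ could still meet $d$ and hence $e$. One must exploit the universal quantifier: if $x \in e \cap f$, pick partition points $p_1, p_2$ of $e$ so that $x$ lies in the arc $\gamma_1$ assigned to $c$; then $c$ meets $f \subset c' \cup d'$ for every bigon pair of $f$, contradicting (2). For the nesting step, ``contrasting with (1)'' does not immediately give a contradiction, since satisfying (1) does not by itself negate (2). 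What is needed (and what the paper does) is the contrapositive: if $e, f$ are disjoint but not nested, produce a bigon pair $\{c,d\}$ for $e$ whose curve $c$ is routed so as to cross $f$; then again no bigon pair for $f$ avoids $c$. You should state this explicitly rather than appealing to (1).
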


\begin{proof}
Let $e, f \in \eqcg(S)$ be two disjoint inessential quasicircles. If $e,f$ are not nested, then $e$ is contained in the $S^1_g$ component of $S\setminus f$. So there exists a bigon pair $\{c,d\}$ for $e$ in this component that will be disjoint from $f$. Moreover, since bigon pairs consist of nonseparating quasicircles and are closed subsets of $S$, then there exists a bigon pair $\{c',d'\}$ for $f$ on $S\setminus \{c\cup d\}$. The reverse direction is straightforward. The existence of disjoint bigon pairs prevents any nesting since there do not exist any bigon pairs in the disk component of $S \setminus e$ or $S \setminus f$. 

Now suppose that $e,f$ are nested. By possibly relabeling, we can now suppose that $f$ is nested in the disk component of $S \setminus e$. Let $\{c, d\}$ be any bigon pair for $e$. Since $c$ is a nonseparating curve and $S \setminus c$ is an open subset of $S$ containing $f$, then there exists a bigon pair $\{c',d'\}$ for $f$ that is disjoint from $c$. For the reverse direction, there are two cases to consider. When $e$ and $f$ intersect or are disjoint, but not nested, then there exists a bigon pair $\{c,d\}$ for $e$ where both $c$ and $d$ intersect $f$, either along $e$ or outside of $e$. In both cases, it is impossible to find a bigon pair for $f$ that is disjoint from $c$.

Finally, we come to the case of an essential quasicircle $a$ and inessential quasicircle $e$. Suppose they are disjoint. Since $a$ is essential, then there is enough topology in the component of $S\setminus a$ that contains $e$ to contain some bigon pair $\{c,d\}$ for $e$. The reverse direction follows from the fact that $e$ is a subset of its bigon pair.
\end{proof}

\begin{proof}[Proof of Proposition~\ref{Prop:AutQuasiGraphToAutExtended}]
    Let $\Phi \in \Aut \qcg(S)$. We will define the map $\overline{\Phi} \in \Aut\eqcg(S)$ by the following two rules:
    \begin{enumerate}
        \item If $a \in \eqcg(S)$ is an essential quasicircle, then $\overline{\Phi}(a)=\Phi(a)$.
        \item If $e \in \eqcg(S)$ is an inessential quasicircle, then let $\{c,d\}$ be a bigon pair for $e$. Define $\overline{\Phi}(e)$ to be the inessential quasicircle determined by $\{\Phi(c),\Phi(d)\}$.
    \end{enumerate}
Directly from this definition, it is clear that the action of $\overline{\Phi}$ restricted to the essential vertices is exactly $\Phi$.

The inessential curve determined by $\{\Phi(c),\Phi(d)\}$ does not depend on the choice of bigon pair as Corollary~\ref{Cor:LinkedQuasiArcGraphConnected} implies that there exists a path of bigon pairs from $\{c,d\}$ to any other bigon pair. In addition, we have that each consecutive pair in this path forms a linked sharing pair, and linked sharing pairs are preserved by Proposition~\ref{Prop:SharingPairsPreserved}. Thus $\overline{\Phi}$ is well-defined.

Moreover, Lemma~\ref{Lem:EdgeBetweenExtendedGraph} shows that for any two vertices $a,b \in\eqcg(S)$, there is an edge between $a$ and $b$ if and only if there is an edge between $\overline{\Phi}(a)$ and $\overline{\Phi}(b)$.

Finally, $\overline{{\Phi}^{-1}}$ induces exactly the inverse action of $\overline{\Phi}$, namely
$$\overline{\Phi^{-1}}(\overline{\Phi}(a)) = a \qquad \mbox{ and } \qquad \overline{\Phi^{-1}}(\overline{\Phi}(e)) = e$$ since $\{\Phi(c),\Phi(d)\}$ is a bigon pair for $\overline{\Phi}(e)$  and $\{\Phi^{-1}(\Phi(c)),\Phi^{-1}(\Phi(d))\} = \{c,d\}$, which is a bigon pair for $e$. Thus $\overline{\Phi}$ is invertible and is a graph automorphism of $\eqcg(S)$ as desired.
\end{proof}

\subsection{Building the homeomorphism}\label{ss-get-homeo}
While the previous sections follow the work in \cite{long2023automorphisms}, their map from automorphisms of the extended graph to the homeomorphisms of the surface cannot be directly adapted to quasicircles.
The fundamental obstacle here is that through any convergent sequence of distinct points there is a Jordan curve by the Denjoy--Riesz Theorem \cite{mooreKline1919denjoyRiesz}, but not necessarily a more regular curve.
Specifically, \cite[Lemma 4.1]{long2023automorphisms} uses the classification of infinite-type surfaces \cite[Theorem 1]{richards1963classification} to prove this fact, which is only up to homeomorphism.

Fortunately, this obstacle has been overcome for smooth curves in ~\cite{booth2024automorphisms}. We will  outline this argument to show the following:

\begin{proposition}\label{prop-auts-come-from-homeos}
    Let $\Phi\in \Aut\mathcal{QC}^{\dagger}(S)$. Then $\Phi = \varphi_{*}$ for some $\varphi \in {\rm{Homeo}}(S)$, where $\varphi_*$ is the automorphism of $\mathcal{QC}^\dagger(S)$ induced by $\varphi$.
\end{proposition}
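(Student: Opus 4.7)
The plan is to adapt the approach from \cite{booth2024automorphisms} for smooth curves, substituting quasicircles throughout. Proposition~\ref{Prop:AutQuasiGraphToAutExtended} lets me pass from $\Phi$ to the extended automorphism $\overline{\Phi}$ of $\eqcg(S)$. Since $\overline{\Phi}$ restricts to $\Phi$ on essential vertices, it suffices to produce a homeomorphism $\varphi$ inducing $\overline{\Phi}$.

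Next, I fix a reference hyperbolic metric on $S$. For each point $p \in S$, sufficiently small geodesic circles centered at $p$ are smooth round circles in local conformal charts, hence inessential quasicircles. Taking radii $r_n \to 0$ produces a nested decreasing sequence of inessential quasicircles $e_1(p) \supset e_2(p) \supset \cdots$ whose bounding closed disks $D_n(p)$ satisfy $\bigcap_n D_n(p) = \{p\}$. Because nesting of disjoint inessential quasicircles is detected by the graph structure (Lemma~\ref{Lem:EdgeBetweenExtendedGraph}(2)), the images $\overline{\Phi}(e_n(p))$ form a nested sequence of inessential quasicircles bounding closed disks $D'_n(p)$, and I define $\varphi(p)$ to be a point in the nonempty compact set $\bigcap_n D'_n(p)$.

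The main step is to show $\bigcap_n D'_n(p)$ is a single point, so that $\varphi$ is well-defined. My approach is to run the symmetric construction using $\overline{\Phi^{-1}}$ to produce a candidate inverse $\psi$ and check that $\varphi$ and $\psi$ are mutually inverse. Specifically, if $q \in \bigcap_n D'_n(p)$ and $f$ is a small inessential quasicircle enclosing $q$ with disk disjoint from $\overline{\Phi}(e_n(p))$ for some large $n$, then $f$ is nested inside $\overline{\Phi}(e_n(p))$. Applying $\overline{\Phi^{-1}}$ and using that nesting is preserved, $\overline{\Phi^{-1}}(f)$ is nested inside $e_n(p)$, so its bounding disk lies in $D_n(p)$ and hence contains $p$ once $f$ is taken small enough around $q$. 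This forces $\psi(q) = p$, and the symmetric argument gives $\varphi(\psi(q)) = q$. Two distinct points in $\bigcap_n D'_n(p)$ would both map to $p$ under $\psi$, contradicting the injectivity of $\psi$ that the same argument provides. Independence of the choice of nested sequence at $p$ follows by interleaving two such sequences into a common refinement.

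Continuity of $\varphi$ is then immediate from the nested disk construction: given a neighborhood $V$ of $\varphi(p)$, pick an inessential quasicircle $f$ with $\varphi(p) \in D(f) \subset V$; the open disk bounded by $\overline{\Phi^{-1}}(f)$ is a neighborhood of $p$ on which $\varphi$ lands inside $D(f) \subset V$. Bijectivity of $\varphi$ and continuity of its inverse follow symmetrically from $\psi$. To verify $\varphi_* = \Phi$, I show $\varphi(a) = \Phi(a)$ setwise for each essential quasicircle $a$: a point $p$ lies on $a$ if and only if $e_n(p) \cap a \neq \emptyset$ for every $n$ (as $a$ is essential, such an intersection is forced to be a crossing, detected in the graph by non-edges), and this condition is preserved by $\overline{\Phi}$. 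I expect the main obstacle to be the single-point intersection argument: the smoothness exploited in \cite{booth2024automorphisms} to adjust auxiliary curves locally must be replaced here by Lemma~\ref{Lem:ExistenceBigonPairs} and the availability of arbitrarily small metric circles, so the argument must run purely from the preserved nesting combinatorics in $\eqcg(S)$.
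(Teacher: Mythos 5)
Your proposal follows the same overall framework as the paper: pass to $\overline{\Phi}\in\Aut\eqcg(S)$ via Proposition~\ref{Prop:AutQuasiGraphToAutExtended}, encode points as nested sequences of inessential quasicircles shrinking to them, define $\varphi(p)$ by tracking where the image sequences accumulate, and then verify continuity, bijectivity, and $\varphi_*=\Phi$. Where you genuinely diverge is the crucial lemma that the image of a nested sequence converging to a point itself converges to a single point. The paper proves this via Lemma~\ref{Lem:NestedConvergSeqInGraph}, which gives a purely graph-theoretic characterization of convergence to a point (there is a curve meeting the tail, and any two such curves meet each other), so preservation is automatic. You instead bootstrap between $\overline{\Phi}$ and $\overline{\Phi^{-1}}$: circles $g_k$ around $q\in\bigcap_n D'_n(p)$ eventually nest inside $\overline{\Phi}(e_n(p))$, so $\overline{\Phi^{-1}}(g_k)$ nests inside $e_n(p)$, forcing $\bigcap_k D(\overline{\Phi^{-1}}(g_k))=\{p\}$; running the same argument with the roles of $\Phi$ and $\Phi^{-1}$ swapped and $p$ in the role of $q$ then forces $\bigcap_n D'_n(p)=\{q\}$. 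This avoids needing the characterization lemma, at the cost of more delicate bookkeeping. Both routes work.

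Two steps in your sketch need tightening. First, ``so its bounding disk lies in $D_n(p)$ and hence contains $p$ once $f$ is taken small enough around $q$'' is a non-sequitur as written: $D(\overline{\Phi^{-1}}(f))\subset D_n(p)$ gives no reason for $p\in D(\overline{\Phi^{-1}}(f))$. What actually works is: take a nested sequence $(g_k)$ around $q$, and for each $n$ choose $k(n)$ with $D(g_{k(n)})\subset\mathrm{int}(D'_n(p))$; then $D(\overline{\Phi^{-1}}(g_{k(n)}))\subset D_n(p)$, and since the $D(\overline{\Phi^{-1}}(g_k))$ are nested, $\bigcap_k D(\overline{\Phi^{-1}}(g_k))\subset\bigcap_n D_n(p)=\{p\}$; nonemptiness of the left side by compactness then gives equality. (You must also check $k(n)\to\infty$; if some fixed $D(g_{k_0})$ lay in every $D'_n(p)$, then $D(\overline{\Phi^{-1}}(g_{k_0}))\subset\{p\}$, which is impossible for a disk.) Second, ``contradicting the injectivity of $\psi$ that the same argument provides'' risks circularity, since the well-definedness of $\psi$ is a claim of exactly the same type you are trying to establish for $\varphi$. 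The clean fix is to phrase the conclusion of the nesting argument symmetrically for both $\overline{\Phi}$ and $\overline{\Phi^{-1}}$ at the level of set-valued assignments and close the loop as above, rather than invoking injectivity of a map not yet known to be single-valued.
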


We begin with identifying a graph structure in the extended graph that is the same as the graph structure in the non-extended graph.

\begin{lemma}\label{Lem:SepCurveinExtendGraph}
    Let $S$ be an oriented surface.  Let $a \in \eqcg(S)$. Then $a$ is separating if and only if $link(a)$ is a join.
\end{lemma}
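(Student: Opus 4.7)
The plan is to prove both directions of the biconditional, reducing to Lemma~\ref{Lem:SeparatingLinkJoin} for the essential subgraph and handling inessential quasicircles using the disks they bound.

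For the forward direction, suppose $a \in \eqcg(S)$ is separating. Then $S \setminus a$ has two open components $U_1, U_2$, both nonempty and each containing vertices of $\eqcg(S)$: if $a$ is essential separating, each $U_i$ contains essential quasicircles as in Lemma~\ref{Lem:SeparatingLinkJoin}; if $a$ is inessential, one $U_i$ is an open disk containing small round (inessential) quasicircles and the other contains essential ones. Any quasicircle disjoint from $a$ is connected and therefore lies entirely in one $U_i$. Partitioning $\mathrm{link}(a)$ accordingly yields two nonempty sets, and pairs from different sides are automatically disjoint, so $\mathrm{link}(a)$ is a join.

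For the backward direction, note that inessential quasicircles bound disks and hence are always separating, so we may assume $a$ is essential. Suppose for contradiction that $\mathrm{link}(a) = V_1 \sqcup V_2$ is a nontrivial join but $a$ is non-separating. Let $V_i^{\mathrm{ess}}$ denote the essential vertices of $V_i$. If both $V_1^{\mathrm{ess}}$ and $V_2^{\mathrm{ess}}$ are nonempty, restricting yields a nontrivial join on $\mathrm{link}_{\qcg(S)}(a)$, contradicting Lemma~\ref{Lem:SeparatingLinkJoin}. Otherwise, say $V_2^{\mathrm{ess}} = \emptyset$, and pick an inessential $e \in V_2$ bounding a disk $D_e$. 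Since $a$ is essential and disjoint from $e$, we must have $D_e \subset S \setminus a$, for otherwise $a \subset D_e$, contradicting essentiality of $a$.

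The main obstacle, and key step, is then producing an essential quasicircle $b \in \mathrm{link}(a)$ with $b \cap e \neq \emptyset$; such a $b$ would contradict the join since $b \in V_1$ and $e \in V_2$ are required to be disjoint. To construct $b$, I would use that $S \setminus a$ is a connected surface of genus $g - 1 \geq 1$, so through any interior point $p \in D_e$ there is a smooth simple closed curve $b$ essential in $S \setminus a$. Such a $b$ is automatically essential in $S$: if it bounded a disk in $S$, that disk would contain $a$, again contradicting essentiality. Smoothness makes $b$ a quasicircle, and passing through $p \in \mathrm{int}(D_e)$ while not being contained in $D_e$ forces $b$ to cross $e$, completing the contradiction.
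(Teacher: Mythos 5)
Your proof is correct. The paper does not actually write out an argument for this lemma---it only remarks that it ``follows directly from the proof of Lemma~\ref{Lem:SeparatingLinkJoin},'' which is itself stated without proof by analogy with the fine curve graph. Your argument is sound and takes a slightly different route: rather than rerunning the earlier proof verbatim for $\eqcg(S)$, you invoke Lemma~\ref{Lem:SeparatingLinkJoin} as a black box and isolate the genuinely new case, namely when one part $V_2$ of the putative join of $\mathrm{link}(a)$ contains no essential vertices. There you use that an inessential $e\in V_2$ bounds a disk $D_e$ which must lie in $S\setminus a$ (else $a\subset D_e$, contradicting essentiality of $a$), then produce a smooth, hence quasicircle, $b$ disjoint from $a$, essential in $S\setminus a$ (and therefore in $S$), through an interior point of $D_e$; connectedness forces $b$ to meet $e$, contradicting the join since $b$ lies in $V_1$ and $e$ in $V_2$. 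The one small point worth making explicit is the disposal of the degenerate subcase where $V_1$ also contains no essential vertex: since $a$ is assumed essential and non-separating with $g\ge 2$, there exist essential quasicircles disjoint from $a$, so at least one part of the join contains an essential vertex, and you may relabel so it is $V_1$ (your constructed $b$ then explicitly witnesses this). The advantage of your reduction-to-statement approach is that it does not rely on details of the earlier proof, which the paper leaves implicit, and it cleanly displays why inessential vertices cause no new obstruction.
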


The proof of this lemma follows directly from the proof of Lemma~\ref{Lem:SeparatingLinkJoin}.  Our next steps involve defining and identifying a construction that allows us to determine the homeomorphism of the surface.

\begin{definition}[Nested sequences]
    A sequence of curves $(c_i)$ on a surface $S$ is {\rm{nested}} if each $c_i$ is separating and there exists a component $C_i$ of $S \setminus c_i$ such that $c_j \subset C_i$, for all $j > i$. 
\end{definition}

\begin{lemma}\label{Lem:NestedCurvesPreserved}
    Let $\Psi \in \Aut \eqcg(S)$. If $(c_i) \in \eqcg(S)$ is a nested sequence of curves, then the sequence $(\Psi(c_i))$ is also nested. 
\end{lemma}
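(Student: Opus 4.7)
The plan is to verify that both defining conditions of nestedness---each $c_i$ being separating, and all later $c_j$ lying in a single complementary component of $c_i$---are combinatorial invariants of $\eqcg(S)$, hence preserved by $\Psi$.

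Preservation of the separating property is immediate from Lemma~\ref{Lem:SepCurveinExtendGraph}: automorphisms preserve links, and the property of being a join is purely graph-theoretic, so each $\Psi(c_i)$ is separating.

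The central step is a combinatorial characterization of ``same side of a separating quasicircle,'' parallel to the role played by Lemma~\ref{Lem:MulticurvesPreserves} in the essential graph. I claim: for separating $c\in\eqcg(S)$ and $\alpha,\beta\in\eqcg(S)$ disjoint from $c$, the vertices $\alpha,\beta$ lie in the same component of $S\setminus c$ if and only if there is a finite sequence $\alpha=\gamma_0,\gamma_1,\dots,\gamma_k=\beta$ of vertices of $\eqcg(S)$ such that each $\gamma_j$ is disjoint from $c$ and consecutive $\gamma_j,\gamma_{j+1}$ intersect. The reverse direction is immediate: a quasicircle disjoint from $c$ lies entirely in one component of $S\setminus c$, and two quasicircles with nonempty intersection cannot lie in disjoint open sets, so the side is constant along the chain. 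For the forward direction, I would cover an arc in the common complementary component from $\alpha$ to $\beta$ by finitely many open topological disks, use Lemma~\ref{Lem:ExistenceBigonPairs} style constructions to place small inessential quasicircles in each disk so that those in overlapping disks intersect, and arrange the first and last small quasicircles to intersect $\alpha$ and $\beta$ respectively. Since this characterization is phrased only in terms of disjointness of vertices of $\eqcg(S)$, it is preserved by $\Psi$.

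With this in hand, the lemma follows: by hypothesis $c_j\subset C_i$ for every $j>i$, so each pair $c_j,c_k$ with $j,k>i$ lies on the same side of $c_i$, and the combinatorial characterization yields the analogous conclusion for the images $\Psi(c_j),\Psi(c_k)$ in $S\setminus\Psi(c_i)$. Since $\Psi(c_i)$ is a separating Jordan curve in the closed surface $S$, its complement has exactly two components, so ``same side'' is an equivalence relation on vertices disjoint from $\Psi(c_i)$. Thus all $\Psi(c_j)$ with $j>i$ lie in a single component $D_i$ of $S\setminus\Psi(c_i)$, which verifies that $(\Psi(c_i))$ is nested. The main obstacle is establishing the chain characterization carefully enough to cover every case---most notably when $c$ is inessential and one of the complementary components is a topological disk, where one still must produce small inessential quasicircles inside the disk to connect vertices lying there.
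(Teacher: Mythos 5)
Your proof is correct and takes essentially the same route the paper indicates (the paper defers details to prior work, noting only that the result follows from Lemma~\ref{Lem:SepCurveinExtendGraph}): preservation of separating curves via the join characterization of links, plus a purely combinatorial ``same side'' characterization via chains of disjoint-from-$c$ but pairwise-intersecting vertices, together give preservation of nestedness. One minor imprecision: the appeal to Lemma~\ref{Lem:ExistenceBigonPairs} for producing small inessential quasicircles in a disk is overkill---small metric circles in a coordinate chart already serve---but this does not affect the argument.
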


The proof for Lemma~\ref{Lem:NestedCurvesPreserved} follows directly from Lemma~\ref{Lem:SepCurveinExtendGraph}. We omit this proof as the details for the same result for smooth curves can be found in~\cite[Lemma 2.3]{booth2024automorphisms}.

\begin{definition}[Convergent sequences]
    A sequence of curves $(c_i)$ in $S$ will be said to {\rm{converge}} to a set $P$ if they converge to $P$ in the Hausdorff topology.  We say that a curve {\rm{intersects the tail}} of a sequence $(c_i)$ if it intersects infinitely many of the $c_i$. 
\end{definition}

\begin{lemma}\label{Lem:NestedConvergSeqInGraph}
    A sequence of nested curves $(c_i) \in \eqcg(S)$ converges to a point $x \in S$ if and only if both of the following hold:
    \begin{enumerate}[noitemsep,topsep=0pt, label=$(\roman*)$]
        \item there exists a curve $a \in \eqcg(S)$ that intersects the tail of $(c_i)$
        \item any two curves $a, b \in \eqcg(S)$ that intersect the tail of $(c_i)$ must intersect each other.
    \end{enumerate}
\end{lemma}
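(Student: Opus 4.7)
The plan is to analyze the compact limit set $K := \bigcap_i \overline{C_i}$ and, for both directions, to use that small round circles in local conformal charts give quasicircles in $\eqcg(S)$.

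For the forward direction, assume $c_i \to \{x\}$ in Hausdorff topology. For (i), take any small round quasicircle $a$ through $x$ in a local conformal chart at $x$. Since $\mathrm{diam}(\overline{C_i}) \to 0$ and $x \in \overline{C_i}$ for all $i$, for large $i$ the curve $a$ contains $x \in \overline{C_i}$ and also has points outside $\overline{C_i}$, hence crosses $c_i = \partial\overline{C_i}$. For (ii), if $a,b$ both meet the tail, choose $p_i \in a \cap c_i$ and $q_i \in b \cap c_i$ for infinitely many $i$; both sequences converge to $x$ by Hausdorff convergence, so closedness of $a,b$ gives $x \in a \cap b$.

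For the backward direction, first note the following dichotomy: if $a \in \eqcg(S)$ contains a point of $K$ and a point outside $K$, then $a$ meets the tail (for large $i$ the $K$-point lies in $\overline{C_i}$ while the other lies outside, so the connected set $a$ crosses $c_i$); conversely, if $a \cap K = \emptyset$ then by compactness and nestedness $a$ is disjoint from $\overline{C_i}$ for all large $i$, so $a$ misses the tail. Because each separating $c_i$ has a proper complementary component, $\overline{C_i} \neq S$ and hence $K \neq S$; combined with (i), $K$ is a proper nonempty compact subset of $S$. Suppose for contradiction $|K| \geq 2$. We claim $|\partial K| \geq 2$: if instead $\partial K = \{p\}$, then $|K| \geq 2$ forces $\mathrm{int}(K) \neq \emptyset$, and $S \setminus \{p\} = \mathrm{int}(K) \sqcup (S \setminus K)$ contradicts connectedness of $S \setminus \{p\}$. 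Pick distinct $p_1, p_2 \in \partial K$ with disjoint open disk neighborhoods $U_1, U_2$; each $U_j$ contains a point $q_j \notin K$ since $p_j \in \partial K$. In each chart take a small round quasicircle $a_j \subset U_j$ through both $p_j$ and $q_j$. Then $a_j$ contains $p_j \in K$ and $q_j \notin K$, so meets the tail by the dichotomy, while $a_1 \cap a_2 = \emptyset$, contradicting (ii). Hence $K = \{x\}$, and a standard subsequence argument gives $\mathrm{diam}(\overline{C_i}) \to 0$ (otherwise two points in $\overline{C_i}$ at bounded distance would yield, by extracting a subsequence, a second point of $K$). Since $c_i \subset \overline{C_{i-1}}$ and $x \in \overline{C_{i-1}}$, we conclude $c_i \to \{x\}$ in Hausdorff topology.

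The main obstacle I expect is formulating the dichotomy correctly and proving the topological lemma that a proper closed subset of $S$ with at least two points has at least two boundary points. Once these are in place, the contradiction comes from constructing disjoint quasicircles as small round circles in local conformal charts through chosen boundary points of $K$, a routine adaptation of the smooth-curve arguments in \cite{booth2024automorphisms}.
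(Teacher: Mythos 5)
Your proof is correct and follows essentially the same route as the argument the paper cites (the smooth-curve analogue, \cite[Lemma 2.4]{booth2024automorphisms}, with the continuous case going back to \cite{farbMargalit2022Unpublished}): introduce the limit set $K=\bigcap_i\overline{C_i}$, prove the ``meets the tail'' dichotomy, use connectivity of $S$ minus a point to force $|\partial K|=1$ when $K$ is not a singleton, and build two disjoint small circles through distinct boundary points of $K$ to contradict (ii). The substitution you make for the regularity issue — small round circles in local conformal charts as the ready supply of quasicircles through prescribed points — is exactly the right adaptation, and the remaining steps (\(\mathrm{diam}\,\overline{C_i}\to 0\) by extracting a second accumulation point, $c_i\subset\overline{C_{i-1}}$ pinning down Hausdorff convergence) are the same bookkeeping. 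One small point worth making explicit in your writeup: both your dichotomy and your diameter argument use that the $\overline{C_i}$ form a decreasing chain (so that $q\notin K$ implies $q\notin\overline{C_i}$ for all large $i$, and so that compactness lets you push $\overline{C_i}$ into any neighborhood of $K$). This is the intended reading of ``nested,'' but it is not literally word-for-word in Definition~4.12 as stated, so it is worth a sentence noting that nestedness gives $\overline{C_{j}}\subset\overline{C_i}$ for $j>i$ before invoking it.
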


Again, the proof of this lemma is identical to the smooth version in~\cite[Lemma 2.4]{booth2024automorphisms} and is omitted here (c.f. \cite{farbMargalit2022Unpublished} for the continuous case of this and Lemma \ref{Lem:ExtendedAutToHomeo}). Since both of the above conditions are identifiable in the graph, this shows that nested convergent sequences are preserved by automorphisms of $\eqcg(S)$.

\begin{lemma} \label{Lem:ExtendedAutToHomeo}
    Every $\Psi \in \Aut \eqcg(S)$ induces a homeomorphism $\varphi_{\Psi}$ such that the action of $\varphi_{\Psi}$ on $\eqcg(S)$ is given by $\Psi$.
\end{lemma}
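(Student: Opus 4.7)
The plan is to define $\varphi_\Psi : S \to S$ at the level of points by using nested convergent sequences of quasicircles as ``coordinates,'' and then verify continuity, bijectivity, and compatibility with the action of $\Psi$ on $\eqcg(S)$. This follows the template used for smooth curves in \cite{booth2024automorphisms}, and it is made available in our setting by the graph-theoretic characterizations of nested sequences (Lemma~\ref{Lem:NestedCurvesPreserved}) and of convergence of such sequences to a point (Lemma~\ref{Lem:NestedConvergSeqInGraph}).

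For the construction, fix $x \in S$ and produce a nested sequence $(c_i)$ in $\eqcg(S)$ converging to $x$. Since small round circles in a coordinate chart around $x$ are inessential quasicircles bounding shrinking disks, such a sequence always exists. The two characterizing conditions of Lemma~\ref{Lem:NestedConvergSeqInGraph} are purely graph-theoretic, so Lemma~\ref{Lem:NestedCurvesPreserved} together with the preservation of those conditions under $\Psi$ forces $(\Psi(c_i))$ to be a nested sequence converging to a unique point of $S$, which I declare to be $\varphi_\Psi(x)$.

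The delicate step is well-definedness. If $(c_i)$ and $(c'_i)$ both converge to $x$, I would extract an interleaved subsequence $c_{i_1}, c'_{j_1}, c_{i_2}, c'_{j_2},\ldots$ by using Hausdorff convergence at each stage to pick the next index so that the new curve lies inside the previous curve's nested component around $x$. This interleaved sequence is again nested in $\eqcg(S)$ and converges to $x$, so its image under $\Psi$ converges to a single point, forcing $\lim \Psi(c_i) = \lim \Psi(c'_i)$. Maintaining the nesting across the interleaving is the main technical subtlety, but it reduces to the fact that nested components containing $x$ eventually shrink strictly inside one another.

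To conclude, continuity follows from a trapping argument: if $x_n \to x$ and $(c_i) \to x$ is a fixed nested sequence, then for each $i$ all but finitely many $x_n$ lie in the nested component of $S \setminus c_i$, hence all but finitely many $\varphi_\Psi(x_n)$ lie in the nested component of $S \setminus \Psi(c_i)$, and these components Hausdorff-shrink to $\varphi_\Psi(x)$. Running the same construction for $\Psi^{-1}$ produces a two-sided inverse, and since $S$ is compact Hausdorff the continuous bijection $\varphi_\Psi$ is automatically a homeomorphism. Finally, a point $x \in S$ lies on a quasicircle $c \in \eqcg(S)$ if and only if $c$ intersects the tail of some (equivalently, every) nested sequence converging to $x$; since $\Psi$ preserves disjointness, this property transfers to $\varphi_\Psi(x)$ and $\Psi(c)$, yielding $\varphi_\Psi(c) = \Psi(c)$ set-theoretically and hence $(\varphi_\Psi)_* = \Psi$ on $\eqcg(S)$.
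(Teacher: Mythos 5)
Your proposal follows the same strategy as the paper: define $\varphi_\Psi$ via nested sequences of quasicircles converging to a point (Lemma~\ref{Lem:NestedConvergSeqInGraph}), establish well-definedness by interleaving, get continuity by trapping/diagonalization, obtain the inverse from $\varphi_{\Psi^{-1}}$, and recover the graph action by observing that $x \in c$ exactly when $c$ intersects the tail of every nested sequence converging to $x$. The paper's sketch phrases the continuity step as extracting a diagonal sequence from a sequence of nested sequences rather than your trapping formulation, but these are the same argument; both sketches defer the full details to the treatment of the smooth case in \cite{booth2024automorphisms}.
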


We will only sketch this proof to give the reader an idea of how this map is built and why it is a homeomorphism.  Full details can be found in the proof for the smooth case in~\cite[Lemma 2.5]{booth2024automorphisms}.

\begin{proof}[Proof Sketch]
    To define the desired map, we first take a nested convergent sequence $(c_i)$ that converges to a point $x \in S$.  Since $\Psi$ preserves nested convergent sequences, the image sequence $(\Psi(c_i))$ must also be a nested convergent sequence.  We define $\varphi_{\Psi}(x)$ to be the point of convergence for this image sequence. 
	
    This map is shown to be well-defined by interleaving subsequences of nested sequences that converge to the same point.  A similar interleaved subsequence of the image sequences is used to show that this map is injective.  Surjectivity follows in a straightforward manner by using a preimage of a nested convergent sequence and from the fact that $\Psi^{-1}$ is also an automorphism of the graph.
	
    To show continuity, we  construct a sequence of nested sequences that converge to the points in a sequence $\{x_n\}$ that converge to $x$.  A diagonal sequence is then used to show that the points $\{\varphi_{\Psi}(x_n)\}$  converge to $\varphi_{\Psi}(x)$.  Continuity of the inverse is immediate from the fact that $\varphi_{\Psi}^{-1} = \varphi_{\Psi^{-1}}$.
	
    Finally,  we  show that the action of $\varphi_{\Psi}$ on $\eqcg(S)$ is exactly $\Psi$. This is accomplished by defining nested convergent sequences for each point on a curve $c$.  Since $c$ intersects the tail of this convergent sequence, then $\Psi(c)$  also intersects the tail of the image sequence.  Thus by compactness, $\Psi(c)$  contains all the points in $\varphi_{\Psi}(c)$. 
\end{proof}

We now give the final proof of this subsection.

\begin{proof}[Proof of Proposition~\ref{prop-auts-come-from-homeos}]
    By Proposition~\ref{Prop:AutQuasiGraphToAutExtended},  every element $\Phi$ of $\Aut \qcg(S)$ corresponds to an element $\overline{\Phi}$ of $\Aut \eqcg(S)$ such that the restriction of $\overline{\Phi}$ to $\qcg(S)$ is exactly $\Phi$.  By Lemma~\ref{Lem:ExtendedAutToHomeo},  there is a homeomorphism $\varphi_{\overline{\Phi}}$ such that the action of $\varphi_{\overline{\Phi}}$ on the extended graph is exactly $\overline{\Phi}$.  Thus the action of $\varphi_{\overline{\Phi}}$ on $\qcg(S)$ is exactly $\Phi$, as desired.
\end{proof}

\subsection{Quasiconformality}\label{ss-quasiconformality}
We now turn to examining the quasiconformality of the homeomorphisms inducing automorphisms of $\eqcg(S)$.
The structure of our argument is to apply Aseev's theorem after constraining the quasicircle constants of images of sufficiently small circles in $S$ in terms of the $4$-point bounded turning condition.

The following is the observation that we use to go from information about essential curves to small circles.

\begin{lemma}\label{lemma-stick-arc-in-QC}
    Let $(S,g)$ be a hyperbolic surface.
    Then there is a finite family $F$ of isotopy classes of simple closed curves and $K, \varepsilon > 0$ so that for all circles $C$ on $S$ of diameter less than $\varepsilon$ and circularly ordered quadruples $q = (x_1, x_2, x_3,x_4) $ in $C$ there is an arc $\mathcal{A}$ of $C$ containing $q$ and an essential $K$-quasicircle $\gamma$ with isotopy class in $F$ containing $\mathcal{A}$.
\end{lemma}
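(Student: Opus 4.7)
The plan is to cover $S$ by finitely many small geodesic disks, assign to each a template essential simple closed geodesic passing through it, and splice an arc of $C$ into the chosen template via a uniformly-controlled surgery.

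First I would fix $r > 0$ smaller than half the injectivity radius of $(S, g)$ and cover $S$ by finitely many geodesic disks $B_i = B_g(p_i, r)$ with $i = 1, \ldots, n$. By compactness of $S$, the exponential charts $\exp_{p_i}$ are $L$-bi-Lipschitz on $B(0, 2r) \subset T_{p_i} S$ for some uniform $L$. For each $i$, I would choose an essential simple closed geodesic $\gamma_i \subset S$ so that $\gamma_i \cap B(p_i, 2r/3)$ is a single geodesic arc $\alpha_i$ passing through $p_i$; such geodesics exist by density of closed geodesics in the unit tangent bundle, together with the choice of $r$ being less than half the injectivity radius so that distinct lifts of $\gamma_i$ cannot both meet a given disk. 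Set $F = \{[\gamma_i]\}$ and pick $\varepsilon < r/10$ so that any circle on $S$ of diameter less than $\varepsilon$ lies in some $B(p_i, r/5)$.

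Given such a circle $C \subset B(p_i, r/5)$ and a circularly ordered quadruple $q = (x_1, x_2, x_3, x_4)$ on $C$, I would let $\mathcal{A}$ be the arc of $C$ obtained by deleting the shortest open sub-arc of $C \setminus q$ between consecutive points of $q$; then $\mathcal{A}$ contains $q$ and $C \setminus \mathcal{A}$ is nonempty. Working in the exponential chart at $p_i$, the arc $\alpha_i$ appears as a line segment through the origin and $\mathcal{A}$ appears as a smooth arc of diameter at most $L\varepsilon$ near the origin. I would construct $\gamma$ by removing the portion of $\alpha_i$ inside $B(p_i, r/3)$ and replacing it by a smooth connecting arc from one endpoint of the removed segment to one endpoint of $\mathcal{A}$, then $\mathcal{A}$ itself, then a smooth connecting arc from the other endpoint of $\mathcal{A}$ back to the other endpoint of the removed segment. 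The connecting arcs can be chosen with matching tangents at their junctions with $\alpha_i$ and with $\mathcal{A}$ and with uniform $C^1$ bounds.

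To verify that $\gamma$ is a $K$-quasicircle with uniform $K$, I would realize $\gamma$ as $\varphi_i(\gamma_i)$ for a homeomorphism $\varphi_i$ of $S$ supported in $B(p_i, r/2)$ which is bi-Lipschitz with uniform constant depending only on $r$, the fixed exponential charts, and the chosen interpolation scheme. Bi-Lipschitz maps are quasiconformal with controlled distortion (as noted for Theorem \ref{thm-bilip-extension}), and each $\gamma_i$ is a $K_i$-quasicircle with $K_i$ depending only on its isotopy class by Definition \ref{def-qcirc-on-genus}; since $F$ is finite, taking $K$ the maximum of the resulting composite constants suffices. The main obstacle is controlling the surgery so that its bi-Lipschitz constant does not degenerate as $C$ shrinks or the quadruple clusters; this is handled by always performing the surgery at a fixed annular collar of $\partial B(p_i, r/3)$, whose geometry depends only on $r$, and by allowing the connecting arcs to stretch along their length within a fixed disk with uniformly bounded source and target geometry, which keeps the bi-Lipschitz constant of $\varphi_i$ bounded.
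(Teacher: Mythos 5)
Your overall strategy is similar to the paper's (splice a sub-arc of $C$ into a nearby simple closed geodesic, then realize the splice as the image of a geodesic under a uniformly quasiconformal self-map), but there is a genuine gap in your choice of the arc $\mathcal{A}$, and it is precisely the point the paper flags as ``the key point.''

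You define $\mathcal{A}$ by deleting the \emph{shortest} of the four complementary arcs of $q$ in $C$. That deleted arc can be arbitrarily small relative to $C$: if two of the $x_i$ are close together, $\mathcal{A}$ is nearly all of $C$, and its two endpoints $a_1,a_2$ are at distance $\delta \ll \mathrm{diam}(C)$. The resulting curve $\gamma = \mathcal{A} \cup \beta$ (where $\beta$ is the union of your two connecting arcs and $\gamma_i \setminus B(p_i,r/3)$) then fails the metric bounded-turning condition with any uniform constant: both components of $\gamma \setminus \{a_1,a_2\}$ have diameter at least $\mathrm{diam}(C)$, while $d(a_1,a_2)=\delta$, so the bounded-turning ratio is at least $\mathrm{diam}(C)/\delta$, which is unbounded as $\delta \to 0$. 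Consequently no $K$-quasiconformal $\varphi_i$ with uniform $K$ can send a geodesic to $\gamma$, and your proposed remedy (performing the surgery in a fixed collar of $\partial B(p_i,r/3)$) does not help, because the obstruction lives near the tiny gap between $a_1$ and $a_2$, not at the collar. The fix is to delete an arc of \emph{definite angular length}: since one of the four complementary arcs must have length at least a quarter of $C$, one can always choose $\mathcal{A}$ to be a $(3/4)$-circle arc containing $q$. This makes $d(a_1,a_2)$ comparable to $\mathrm{diam}(C)$ and keeps the splice uniformly controlled. The paper builds its extension with a graph decomposition and the Lipschitz Sch\"onflies theorem rather than a single bi-Lipschitz push as you propose; once the arc choice is corrected, your more direct bi-Lipschitz route is plausible, though it requires justifying that $\varphi_i$ can be chosen with uniformly bounded distortion (and your claim that $\gamma_i \cap B(p_i,2r/3)$ is a single arc does not follow from the injectivity radius alone and needs a separate argument).
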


Working only with metric circles here will suffice for our uses because Aseev's Theorem \ref{thm-aseev} only requires verification that circles are mapped to quasicircles.

\begin{proof}
    Let $\varepsilon$ be small compared to the injectivity radius of $(S,g)$
    and let $C$ be a circle of diameter less than $\varepsilon$. Then there is a $(3/4)$-circle arc $\mathcal{A}_q$ of $C$ containing $q$: this is the key point.
    
    Now extend $\mathcal{A}_q$ to an arc $\alpha_q$ that has diameter a definite but small fraction of the injectivity radius by extending each side with geodesic segments perpendicular to the boundary of the circle.
    Let $B_q$ be a ball containing $\alpha_{q}$ and so that the endpoints of $\alpha_{q}$ are contained in and perpendicular to $\partial B_{q}$. Finally, obtain $\gamma$ by connecting the sides to each other with an arc that only meets $B_{q}$ at the endpoints and lies in one of a finite family $F$ of isotopy classes.

    We conclude by noting that this process yields $\gamma$ with uniformly controlled quascircularity constant $K$.
    Because the curves $\gamma$ are so regular and controlled this is quite standard; we outline a careful proof for the convenience of the reader.

    Let $\nu$ be the geodesic in the isotopy class of $\gamma$.
    Our task is to find a quasiconformal map $\varphi : (S,g) \to (S,g)$ taking $\nu$ to $\gamma$ with uniform control on $K(\varphi)$. 
    The above process allows us to choose $\gamma$ with the following control.
    First, we arrange for $\gamma$ to be piecewise smooth and piecewise geodesic outside $\mathcal{A}_q$, with uniform bounds on the number of geodesic segments and angles between segments.
    Next, we assure that $\gamma$ does not wrap too closely back on itself in the sense that for a uniform $\varepsilon > 0$, the $\varepsilon$-neighborhood of the essential lift of $\gamma$ in the corresponding annular covering is embedded isometrically in $(S,g)$ by the covering map.

    To extend to the rest of $S$, we define a map on metric graphs in $S$ whose complementary regions are disks (e.g. topological triangulations), then apply extension results to define this map on the remaining disks.
    Because of the uniformity of our construction, we may take two embedded graphs $G_1, G_2$ with peicewise smooth edges in $S$ satisfying the following:
    \begin{enumerate}
        \item $\nu$ and $\gamma$ are unions of cycles $C_i$ in $G_i$ of the same length. 
        \item All complementary components $P_j^i$ of $G_i$ are disks ($i = 1, 2$, $j = 1, ..., n$) with uniformly bounded diameter in their induced metrics,
        \item There is a homeomorphism $\psi$ of $S$ isotopic to the identity taking $G_1$ to $G_2$.
        Write the induced isomorphism of graphs by $\Phi$.
        \item There is a homeomorphism $\psi' : G_1 \to G_2$ inducing the graph isomorphism $\Phi$ so that the restriction of $\psi'$ to the boundary of each cell $P_j^i$ is $L$-bilipschitz for a uniform $L > 1$ with respect to the metrics on $P_j^i$ induced by the metric on $S$.
    \end{enumerate} 

    It then follows from lifting to $\bbH^2$, comparing the hyperbolic and Euclidean metrics with the diameter bound on $P_j^i$, and applying Theorem \ref{thm-bilip-extension} that there is a uniform $K < \infty$ so that there exist $K$-quasiconformal homeomorphisms $P_j^1 \to P_j^2$ that restrict to $\psi'$ on $\partial P_j^i$.
    We conclude that $\psi'$ extends to a homeomorphism $\psi''$ of $S$ that takes $\nu$ to $\gamma$ and is $K$-quasiconformal on the complement of $G$.
    The map $\psi''$ is then $K$-quasiconformal as the graphs $G_i$ have measure $0$.
\end{proof}

We now prove Proposition \ref{prop-is-quasiconf}.

\begin{proof}
    Let $S = S_g$ with $g \geq 2$ and let $\Phi\in\mathrm{CAut} \mathcal{QC}^{\dagger}(S)$.
    By Proposition \ref{prop-auts-come-from-homeos}, $\Phi$ is induced by a homeomorphism $\varphi$ of $S$.
    
    Since $K$-quasiconformality is a local condition, it suffices to produce for every point $p \in S$ a neighborhood $U$ of $p$ so that the restriction of $\varphi$ to $U$ is $K$-quasiconformal.

    Let $\varepsilon_0>0$ be sufficiently small that Lemma \ref{lemma-stick-arc-in-QC} holds for some $K_0$ and finite set $F \subset \mathscr{S}$ of isotopy classes of simple closed curves.
    As $\Phi$ is weakly coarse order preserving and $F$ is finite, there is a $K_1$ so that $\varphi(\gamma)$ is a $K_1$-quasicircle for every $K_0$-quasicircle $\gamma$.

    Uniformize $(S,g)$ so that the universal cover of $S$ is identified with the hyperbolic plane $\bbH^2$ in the Poincar\'e disk model.
    We may lift $\varphi$ to an equivariant homeomorphism $\widetilde{\varphi}$ of $\bbH^2$.
    To verify quasiconformality of $f$ near $p$, it suffices to verify quasiconformality of $\widetilde{\varphi}$ near a lift of $p$.

    Now let $C$ be a circle inside of $U$, let $q$ be an ordered quadruple of points in $C$ and let $\alpha$ be the curve given by Lemma \ref{lemma-stick-arc-in-QC}. 
    By the definition of $K$-quasicircles, there is a $K_1$-quasiconformal homeomorphism $\eta : S \to S$ and a geodesic $\gamma \in F$ so that $\eta(\gamma) = \varphi(\alpha)$.
    Note that $\eta$ lifts to a $K_1$-quasiconformal homeomorphism of $\bbH^2$, and hence extends to a $K_1^2$-quasiconformal homeomorphism of $\CP^1$, and that the extension of $\eta$ maps a circle in $\CP^1$ to a $K_1^2$-quasicircle containing $\widetilde{\alpha}$.
    We conclude that for any lift $\widetilde{\alpha}$ of $\alpha$ to $\bbH^2$ that $\widetilde{\varphi}(\widetilde{\alpha})$ satisfies the $c(K_1^2)$-$4$-point bounded turning condition, and in particular the $c(K_1^2)$-$4$-point bounded turning condition holds for lifts of our quadruple of points in $C$.
    
    Since this holds for an arbitrary quadruple of points in $C$, we conclude that $\widetilde{\varphi}(C)$ is a $K(c(K_1^2))$-quasicircle in $\CP^1$.
    By Aseev's theorem, $\widetilde{\varphi}$ is quasiconformal with a $p$-independent quality constant in a neighborhood of $p$, as desired.
\end{proof}

\section{Hyperbolicity}

In this section, we prove that the quasicircle graph is hyperbolic (Theorem \ref{thm-hyperbolic}).
This is a fundamental property for graphs acted on by groups in geometric-group-theoretic applications.
The proof closely follows the proof of hyperbolicity of the fine curve graph by Bowden--Hensel--Webb \cite{bowden2022quasimorphisms}, however, minor changes are required when working with quasicircles.

We will define hyperbolicity via the four point condition: for points $x,y,w$ of a metric space $(X,d)$, the Gromov product is defined to be
\[
\langle x,y \rangle_w = \frac{1}{2} (d(w,x) + d(w,y) - d(x,y)).
\]
The space $X$ is said to be $\delta$-hyperbolic if for all $w,x,y,z \in X$,
\[
\langle x,z \rangle_w \geq \min\{ \langle x,y \rangle_w, \langle y,z \rangle_w \} - \delta.
\]
We give all graphs metrics by setting each edge to have length equal to 1.

Throughout this section, let $S$ denote a closed oriented surface of genus $g \geq 2$.
First, we show that the quasicircle graph is path connected.

\begin{proposition}\label{thm:pathconnected}
    $\qcg(S)$ is path connected.
\end{proposition}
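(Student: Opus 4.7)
The plan is to decompose the desired connectivity into two pieces: connecting representatives of different isotopy classes using hyperbolic geodesics, and connecting different representatives of the same isotopy class using the path-connectivity statement already furnished by Lemma \ref{Lem:QuasiCirclesHomotopic}.

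First, I would fix a hyperbolic metric $g$ on $S$ and recall two inputs. The classical curve graph $\mathcal{C}(S)$ of essential simple closed curves up to isotopy, with edges given by disjoint representatives, is connected for $g \geq 2$ (Lickorish/Harvey, or an elementary induction on intersection number). Moreover, by Definition \ref{def-qcirc-on-genus} every simple closed geodesic in $(S,g)$ is a $1$-quasicircle, hence a vertex of $\qcg(S)$; and geodesic representatives of isotopy classes that admit disjoint representatives are themselves disjoint.

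Given any two essential quasicircles $\alpha, \beta \in \qcg(S)$, I would use connectivity of $\mathcal{C}(S)$ to choose a sequence of isotopy classes $[\alpha] = [c_0], [c_1], \ldots, [c_n] = [\beta]$ in which consecutive entries admit disjoint representatives. Let $\gamma_i$ denote the $g$-geodesic representative of $[c_i]$; the second ingredient above shows that $\gamma_i$ and $\gamma_{i+1}$ are disjoint, producing a path of edges $\gamma_0 - \gamma_1 - \cdots - \gamma_n$ in $\qcg(S)$. It then remains to connect $\alpha$ to $\gamma_0$ inside the isotopy class $[c_0]$ and $\gamma_n$ to $\beta$ inside $[c_n]$; both are immediate applications of Lemma \ref{Lem:QuasiCirclesHomotopic}, which is precisely the statement that any two homotopic quasicircles are connected by a path of pairwise-disjoint homotopic quasicircles in $\qcg(S)$. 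Concatenating these two intra-class paths with the sequence of geodesics yields the required path from $\alpha$ to $\beta$.

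The main difficulty in the overall argument is the intra-isotopy-class step, where intermediate leaves of a topological isotopy between two quasicircles need not themselves be quasicircles; this subtlety has already been resolved in Lemma \ref{Lem:QuasiCirclesHomotopic} by taking a finite cover of the isotopy parameter interval by annular neighborhoods and using the approximation technique of Lemma \ref{Lem:ExistenceBigonPairs} inside each annulus to produce quasicircle representatives. Given that input, the proposed proof is essentially a bookkeeping argument, with no further analytic subtlety required.
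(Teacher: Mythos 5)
Your proof is correct and takes essentially the same route as the paper: reduce to connectivity of the classical curve graph $\calC(S)$ plus connectivity of the fiber of each isotopy class via Lemma \ref{Lem:QuasiCirclesHomotopic}. The paper phrases this more tersely through the simplicial projection $\qcg(S)\to\calC(S)$ and leaves the use of geodesic representatives to bridge between adjacent fibers implicit, whereas you spell that step out, but the underlying argument is the same.
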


\begin{proof}
    Notice that there is a simplicial map $\qcg(S) \to \calC(S)$ given by taking isotopy classes. Thus, it suffices to show that for any vertex of $\calC(S)$, the subgraph of $\qcg(S)$ spanned by its preimage is connected.
    This follows directly from the existence of a path between homotopic quasicircles in Lemma \ref{Lem:QuasiCirclesHomotopic}. We note that the quality constants of the vertices in such a path  may be larger than the quality constants of its endpoints.
\end{proof}

We now introduce the \textit{surviving curve graph}, which is a subgraph of the curve graph.
The distance in the surviving curve graph models distance in the quasicircle graph, which helps us prove the quasicircle graph is hyperbolic.

\begin{definition}[Surviving curve graph]
Let $n \geq 0$ and $\Sigma$ be the surface of genus $g \geq 1$ with $n$ punctures. The {\rm{surviving curve graph}} of the surface $\Sigma$, denoted $\mathcal{C}^s(\Sigma)$, is the
subgraph of $\calC(\Sigma)$ spanned by all vertices corresponding to curves that continue to be essential
in the closed surface obtained by filling in the punctures of $\Sigma$.
\end{definition}

Work of Rasmussen shows that the surviving curve graph is $\delta$-hyperbolic \cite{rasmussen}:

\begin{theorem}\label{Thm:SurvivingHyperbolic}
    Let $\Sigma$ be a finite-type surface with positive genus. Then
    there is a number $\delta > 0$ such that $\mathcal{C}^s(\Sigma)$ is $\delta$-hyperbolic.
\end{theorem}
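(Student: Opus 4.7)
The natural approach is to reduce to the Masur--Minsky hyperbolicity of the ordinary curve graph by exploiting the filling map $\pi: \mathcal{C}^s(\Sigma) \to \mathcal{C}(S)$, where $S$ denotes the closed surface obtained by filling in the punctures of $\Sigma$. By the very definition of the surviving curve graph, $\pi$ is well-defined on vertices, and since disjointness is preserved by filling, $\pi$ is simplicial and in particular coarsely Lipschitz. The plan is to apply Bowditch's guessing geodesics criterion, using lifts of $\mathcal{C}(S)$-geodesics to construct candidate paths in $\mathcal{C}^s(\Sigma)$.

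Concretely, for vertices $a, b \in \mathcal{C}^s(\Sigma)$, I would fix a geodesic $v_0, v_1, \ldots, v_n$ in $\mathcal{C}(S)$ joining $\pi(a)$ to $\pi(b)$ and produce lifts $\tilde{v}_i \in \mathcal{C}^s(\Sigma)$ with $\tilde{v}_0 = a$, $\tilde{v}_n = b$, and $\tilde{v}_i, \tilde{v}_{i+1}$ disjoint. This is possible because disjointness of $v_i, v_{i+1}$ in $S$ can be realized by curves in $\Sigma$ that avoid the punctures. The guessed path $\mathcal{P}(a,b)$ would be the concatenation of these edges. The first two axioms of Bowditch's criterion (endpoint containment and small diameter for close pairs) are immediate from the construction; the slim triangles condition requires more work.

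The main obstacle is controlling the fibers of $\pi$: two vertices in $\mathcal{C}^s(\Sigma)$ mapping to the same $v \in \mathcal{C}(S)$ can be arbitrarily far apart, since they may differ by moving punctures through the complementary subsurface. To address this, I would refine the guessed geodesics using subsurface projections onto the complements of the $v_i$'s in $S$, tracking the positions of the punctures as markings on those subsurfaces. The key technical step is a combination theorem in the spirit of Masur--Minsky or Behrstock--Hagen--Sisto, showing that bounded geodesic image properties for the relevant projections imply hyperbolicity of the total space. My expectation for the cleanest execution is to identify $\mathcal{C}^s(\Sigma)$ up to quasi-isometry with a suitable electrification of the full curve graph $\mathcal{C}(\Sigma)$, coning off the curves bounding once-punctured disks or annular neighborhoods of the punctures, and then to invoke the standard fact that coning off a uniformly quasi-convex family preserves hyperbolicity. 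The hard part is verifying the requisite quasi-convexity of the coned family, for which I would use projections to subsurface arc complexes and the hierarchical structure of $\mathcal{C}(\Sigma)$.
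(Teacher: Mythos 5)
The paper does not actually prove this statement: Theorem~\ref{Thm:SurvivingHyperbolic} is quoted as a known result of Rasmussen (see the sentence immediately preceding it and the citation \cite{rasmussen}), so there is no ``paper's own proof'' to compare against. Your proposal should therefore be evaluated on its own terms as a sketch of an independent argument, and in that light it has substantial unresolved gaps.

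First, you correctly identify the central obstruction --- that the fibers of the filling map $\pi:\mathcal{C}^s(\Sigma)\to\mathcal{C}(S)$ have infinite diameter --- but then you only gesture at a resolution (``subsurface projections,'' ``a combination theorem in the spirit of Masur--Minsky or Behrstock--Hagen--Sisto'') without specifying which projections, which bounded geodesic image statement, or which combination theorem would actually close the argument. Until that is pinned down, the Bowditch guessing-geodesics route does not yield the slim-triangles axiom. Second, and more seriously, your ``cleanest execution'' rests on an asserted quasi-isometry between $\mathcal{C}^s(\Sigma)$ and an electrification of $\mathcal{C}(\Sigma)$. But $\mathcal{C}^s(\Sigma)$ is by definition the full \emph{subgraph} of $\mathcal{C}(\Sigma)$ induced on surviving curves --- it is obtained by \emph{deleting} the non-surviving vertices, not by coning them off. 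Deleting vertices and coning them off produce very different coarse geometries in general (deletion can disconnect or lengthen paths arbitrarily, while coning only shortens them), so this identification needs a genuine argument and is not merely ``the standard fact that coning off a uniformly quasi-convex family preserves hyperbolicity.'' Third, even granting that reduction, you explicitly leave the quasi-convexity of the coned family as ``the hard part,'' so the proposal does not reach a proof. Since the authors rely on Rasmussen for this theorem, the intended move for a reader here is to consult that reference rather than re-derive the statement; if you want to prove it yourself, the two highlighted points --- controlling the fibers of $\pi$ and justifying the subgraph-versus-cone-off identification --- are where the real work lies.
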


The following two lemmas, found in Bowden--Hensel--Webb \cite[Lemmas 3.5 \& 3.6]{bowden2022quasimorphisms} in the context of all curves, allow us to prove that  distance in the surviving curve graph and  distance in the quasicircle graph are equivalent.
Both proofs only require a minor modification to account for the quality constants of the quasicircles.
In particular, when applying isotopies and perturbations to curves in the proofs, we note that the quality constants of the quasicircles may increase.

\begin{lemma}\label{Lem:IsotopeToMinimalPosition}
    Let $P \subset S$ be a finite set.
    Suppose that $\alpha_1, \ldots, \alpha_n$ are curves that are pairwise in minimal position in $S - P$. Let $\beta_1, \ldots , \beta_m$ be $K_i$-quasicircles that are disjoint from $P$.
    Then the $\beta_i$ can be isotoped to $K_i'$-quasicircles $\beta_1', \ldots , \beta_m'$ in $S - P$ such that $\alpha_1, \ldots , \alpha_n, \beta_1', \ldots , \beta_m'$ are
    pairwise in minimal position in $S - P$.
\end{lemma}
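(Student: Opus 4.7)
I would process the $\beta_i$ one at a time, inductively arranging each to be in minimal position with the previously-established collection. At step $i$, assume by induction that $\beta_1', \ldots, \beta_{i-1}'$ have been produced so that $\{\alpha_1, \ldots, \alpha_n, \beta_1', \ldots, \beta_{i-1}'\}$ is pairwise in minimal position in $S - P$, each $\beta_k'$ being a $K_k'$-quasicircle. The goal is to isotope $\beta_i$ in $S - P$ to a new quasicircle in minimal position with all of the above. The key idea is that a quasiconformal change of coordinates reduces this to the classical bigon-removal argument for smooth curves.

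\textbf{Execution.} Choose a small annular neighborhood $U$ of $\beta_i$ disjoint from $P$; this is possible since $\beta_i \cap P = \emptyset$. By the construction in Remark \ref{remark-def-well-behaved} (built on Theorem \ref{thm-smooth-away-from-bad}), there is a quasiconformal self-homeomorphism $\phi \colon S \to S$ supported in $U$ such that $\phi(\beta_i)$ is smooth. Since $\phi$ is the identity outside $U$, it fixes $P$ pointwise and does not alter the other curves outside of $U$. The smooth curve $\phi(\beta_i)$ sits alongside the topological circles $\phi(\alpha_j)$ and $\phi(\beta_k')$ (for $k<i$) in $S - P$. I would then apply the classical bigon criterion iteratively to $\phi(\beta_i)$: whenever a bigon between $\phi(\beta_i)$ and any of the other curves exists, it can be eliminated by a smooth isotopy of $\phi(\beta_i)$ supported in a small neighborhood of an innermost such bigon. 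This yields a smooth curve $\gamma$ isotopic to $\phi(\beta_i)$ in $S - P$ and in minimal position with all $\phi(\alpha_j)$ and $\phi(\beta_k')$. Setting $\beta_i' := \phi^{-1}(\gamma)$, topological invariance of minimal position under the homeomorphism $\phi$ gives that $\beta_i'$ is in minimal position with the $\alpha_j$ and $\beta_k'$ in $S - P$. Moreover, $\gamma$ is a $K_0$-quasicircle for a uniform $K_0$ (since it is smooth on the compact surface $S$, hence satisfies a differential quasiconformality bound), so $\beta_i'$ is a $K_i'$-quasicircle with $K_i' \le K_0 \cdot K(\phi)$. Iterating over $i = 1, \ldots, m$ produces the desired family.

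\textbf{Main obstacle.} The crux is verifying that the bigon-removal isotopies of $\phi(\beta_i)$ can be carried out inside $S - \phi(P) = S - P$, without disturbing previously-established minimal-position relationships. This reduces to two observations: (i) an innermost bigon is a topological disk whose interior is disjoint from the other curves in the collection, and by the inductive hypothesis together with the disjointness of $\beta_i$ from $P$ its interior can be taken small enough to avoid $P$ as well; and (ii) the supporting isotopy only moves $\phi(\beta_i)$, leaving the other curves fixed and hence preserving their pairwise minimal positions. A secondary concern is tracking that the $\phi(\alpha_j)$ (and $\phi(\beta_k')$) can be placed in general position with the smooth curve $\phi(\beta_i)$ before bigon counting, which is automatic after a preliminary small perturbation of $\phi(\beta_i)$ inside its smooth annular neighborhood (this perturbation is itself quasiconformal and so is absorbed into the bound on $K_i'$).
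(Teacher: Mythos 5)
Your overall strategy -- quasiconformally smooth $\beta_i$ via a compactly-supported correction, run classical bigon removal on the smooth curve, then pull back -- is a reasonable and workable elaboration of what the paper only gestures at. The paper's proof consists of citing Bowden--Hensel--Webb \cite[Lemma~3.5]{bowden2022quasimorphisms} for topological curves and observing that the isotopies therein can be taken to preserve quasicircularity (with possibly larger constants); your conformal change of coordinates is one concrete way to implement that observation, and in that sense your route is genuinely a bit different in mechanism though not in spirit. The mechanism also buys something: once $\phi(\beta_i)$ is smooth, the fact that the isotoped curve $\gamma$ remains a quasicircle is immediate, whereas following BHW verbatim one has to check that each ambient bigon-pushing isotopy can be realized bi-Lipschitz so that it preserves quasicircularity of the (non-smooth) $\beta_i$ directly.

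That said, there is a genuine conceptual error in your ``Main obstacle'' discussion of $P$. You write that the innermost bigon's ``interior can be taken small enough to avoid $P$.'' This is not meaningful: a bigon is determined by the curves, and its interior either contains points of $P$ or it does not -- you do not get to shrink it. The correct statement is that the bigon criterion in $S-P$ characterizes minimal position as the absence of bigons \emph{whose interiors are disjoint from} $P$; bigons containing punctures are essential and must \emph{not} be removed (removing one would change the isotopy class in $S-P$). Your iteration should therefore be stated as: at each stage, find a bigon between $\phi(\beta_i)$ and some other curve that is innermost among all such bigons disjoint from $P$, and push across it. Once this is corrected, the termination argument works (an innermost bigon disjoint from $P$ has interior arcs of the other curves only of ``triangle'' type, since any arc entering and exiting through the same side would create an inner bigon, contradicting innermostness or the pairwise minimality of the $\alpha_j$ and $\beta_k'$ in $S - P$; pushing across strictly decreases the total intersection count with the collection). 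You should also drop the claim that a smooth curve is a $K_0$-quasicircle ``for a uniform $K_0$'' -- the constant depends on the curve, not just on smoothness -- but as the lemma imposes no uniformity this has no bearing on correctness. Finally, the assertion that $\phi$ can be taken supported in $U$ (hence fixing $P$) is not quite what Remark~\ref{remark-def-well-behaved} or the proof of Lemma~\ref{Lem:ExistenceBigonPairs} gives; those produce a global quasiconformal map, not one equal to the identity outside $U$. You would need to interpolate Tukia's map to the identity near $\partial U$ using its smoothness away from the quasicircle, or alternatively postcompose with a diffeomorphism returning $\phi^{-1}(P)$ to $P$ -- either is doable, but it is an extra step worth flagging.
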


\begin{lemma}\label{Lem:GeodesicDisjointFromP}
    Let $\alpha, \beta \in \qcg(S)$ and $P \subset S$ be a finite set. Then we may find
a geodesic $\alpha = v_0, \ldots , v_k = \beta$ such that $v_i \cap P = \emptyset$ for all $0 < i < k$.
\end{lemma}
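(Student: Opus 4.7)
The plan is to start with an arbitrary geodesic $\alpha = v_0, v_1, \ldots, v_k = \beta$ in $\qcg(S)$ and replace each interior vertex $v_i$ (for $0 < i < k$) by a quasicircle $v_i'$ disjoint from $P$, in such a way that the resulting sequence remains a path of length $k$ in $\qcg(S)$ and is therefore still a geodesic from $\alpha$ to $\beta$.

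First I would choose, for each interior index $i$, a small open annular tubular neighborhood $A_i$ of $v_i$ satisfying three conditions: (a) $A_i$ is disjoint from $v_{i-1} \cup v_{i+1}$; (b) $A_i \cap P \subseteq v_i$, so the only points of $P$ that $A_i$ meets lie on $v_i$ itself; and (c) $A_i \cap A_{i+1} = \emptyset$ for every $0 \leq i \leq k-1$, with the convention $A_0 = v_0$, $A_k = v_k$. All three conditions can be arranged because the compact curve $v_i$ is disjoint from the closed set $v_{i-1} \cup v_{i+1} \cup (P \setminus v_i)$, so the relevant separation distances are strictly positive; condition (c) follows by shrinking, using $d(v_i, v_{i+1}) > 0$ for consecutive vertices.

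Next I would construct $v_i'$ inside $A_i$. The set $F_i := P \cap A_i$ is finite and contained in $v_i$ by (b). Following the method in the proof of Lemma \ref{Lem:ExistenceBigonPairs}, I would use Theorem \ref{thm-smooth-away-from-bad} to produce a quasiconformal homeomorphism $h_i$ of $S$ supported in $A_i$ for which $h_i(v_i)$ is smooth. I would then smoothly perturb $h_i(v_i)$ within $A_i$ to a smooth simple closed curve $w_i$ avoiding the finite set $h_i(F_i)$, and set $v_i' := h_i^{-1}(w_i)$. Because $h_i$ is quasiconformal and $w_i$ is smooth, $v_i'$ is a quasicircle (possibly with a larger quality constant than $v_i$), and by construction $v_i' \subset A_i$ with $v_i' \cap P = v_i' \cap F_i = \emptyset$.

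Finally, conditions (a) and (c) imply that the new sequence $v_0, v_1', \ldots, v_{k-1}', v_k$ is a path in $\qcg(S)$: for every $i$, $v_i' \subset A_i$ is disjoint from $v_{i-1}' \subset A_{i-1}$ (or from $v_0$ when $i = 1$) and likewise from $v_{i+1}'$. Its length is still $k = d_{\qcg(S)}(\alpha, \beta)$, so it is a geodesic with the required property. The main technical obstacle is guaranteeing that the perturbed curve $v_i'$ is genuinely a quasicircle and not merely a topological embedding; this is exactly the difficulty that Theorem \ref{thm-smooth-away-from-bad} is designed to overcome, and the template worked out in Lemma \ref{Lem:ExistenceBigonPairs} shows how to combine it with Lemma \ref{lemma-bt-localizes} to control quasicircularity after a local modification.
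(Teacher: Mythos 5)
Your proposal is correct and follows the same strategy that the paper (implicitly) uses: it defers to Bowden--Hensel--Webb's Lemma~3.6, noting only that perturbations of quasicircles may raise quality constants, and the BHW argument is exactly yours --- choose pairwise disjoint annular neighborhoods $A_i$ of the interior vertices of a given geodesic, replace each $v_i$ by a nearby vertex in $A_i$ off of $P$, and observe the disjointness conditions are preserved. One small remark: the Tukia-theorem machinery you import from the proof of Lemma~\ref{Lem:ExistenceBigonPairs} is more than you need here, since unlike in that lemma the replacement curve $v_i'$ is not required to share an arc with $v_i$. You can simply take $v_i'$ to be any smooth embedded core curve of the annulus $A_i$ that misses the finite set $P \cap A_i$; smooth curves are automatically quasicircles, and this sidesteps your claim (asserted but not justified by the cited tools) that the Tukia-derived quasiconformal map $h_i$ can be taken to be supported in $A_i$.
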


Notice that Lemma \ref{Lem:IsotopeToMinimalPosition} shows that the distance in the surviving curve graph is an upper bound for the distance in the quasicircle graph, and Lemma \ref{Lem:GeodesicDisjointFromP} shows that the distance in the surviving curve graph is a lower bound for the distance in the quasicircle graph, giving us the following: 

\begin{lemma}\label{Lem:DistanceinQCSurviving}
    Suppose that $\alpha, \beta \in \qcg(S)$ intersect transversely, and that $\alpha$ and $\beta$
    are in minimal position in $S -P$ where $P \subset S$ is finite and disjoint from $\alpha$
    and $\beta$. Then
    $d_{\mathcal{C}^s(S-P)}
    ([\alpha]_{S-P} , [\beta]_{S-P} ) = d_{\qcg(S)}
    (\alpha, \beta)$.
\end{lemma}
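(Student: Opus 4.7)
The plan is to prove the two inequalities separately, using Lemma~\ref{Lem:IsotopeToMinimalPosition} for the upper bound and Lemma~\ref{Lem:GeodesicDisjointFromP} for the lower bound. Before starting, note that because $\alpha$ and $\beta$ intersect transversely and are in minimal position in $S-P$, they lie in distinct isotopy classes in $S-P$, so both distances are at least $1$.

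For the inequality $d_{\qcg(S)}(\alpha, \beta) \leq d_{\mathcal{C}^s(S-P)}([\alpha]_{S-P}, [\beta]_{S-P})$, I would take a geodesic
$[\alpha]_{S-P} = [v_0]_{S-P}, [v_1]_{S-P}, \ldots, [v_k]_{S-P} = [\beta]_{S-P}$
in the surviving curve graph and realize it by a path in $\qcg(S)$. For each intermediate class $[v_i]_{S-P}$ with $0 < i < k$, choose a quasicircle representative $\tilde v_i$ (for example a smooth representative of the isotopy class), perturbed slightly so that $\tilde v_i$ is disjoint from $P$. Then apply Lemma~\ref{Lem:IsotopeToMinimalPosition} with base curves $\alpha_1 = \alpha$ and $\alpha_2 = \beta$ (already in minimal position in $S-P$ by hypothesis) and quasicircles $\beta_i = \tilde v_i$. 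The lemma outputs quasicircles $v_1', \ldots, v_{k-1}'$, isotopic to the $\tilde v_i$ in $S-P$, that are pairwise in minimal position with each other and with $\alpha,\beta$. Since consecutive classes in the original geodesic admit disjoint representatives in $S-P$, minimal position of two curves in those classes forces $0$ intersections; thus $\alpha, v_1', \ldots, v_{k-1}', \beta$ is a path of length $k$ in $\qcg(S)$.

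For the reverse inequality, I would apply Lemma~\ref{Lem:GeodesicDisjointFromP} to obtain a geodesic $\alpha = v_0, v_1, \ldots, v_k = \beta$ in $\qcg(S)$ with $v_i \cap P = \emptyset$ for $0 < i < k$; the hypothesis that $\alpha,\beta$ are disjoint from $P$ then gives $v_i \cap P = \emptyset$ for every $i$. Each $v_i$ is an essential quasicircle in $S$, so its isotopy class $[v_i]_{S-P}$ is a vertex of $\mathcal{C}^s(S-P)$. Consecutive vertices $v_i, v_{i+1}$ are disjoint in $S$ and avoid $P$, so their classes in $S-P$ are either equal or joined by an edge in $\mathcal{C}^s(S-P)$, giving a path from $[\alpha]_{S-P}$ to $[\beta]_{S-P}$ of length at most $k$.

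The main obstacle is in the upper bound direction: one has to be careful that the quasicircle representatives of the intermediate classes can be taken to be disjoint from $P$ before applying Lemma~\ref{Lem:IsotopeToMinimalPosition}, and that the output quasicircles are still essential in $S$ (which is automatic, since isotopy preserves the essential isotopy class). Both issues are routine, but checking that consecutive output quasicircles are genuinely disjoint (not merely in minimal position with some intersections) relies crucially on the fact that adjacency in $\mathcal{C}^s(S-P)$ means the isotopy classes admit disjoint representatives in $S-P$, so any pair of curves in those classes in minimal position must be disjoint.
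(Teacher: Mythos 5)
Your proposal is correct and takes essentially the same approach as the paper: the paper likewise observes that Lemma~\ref{Lem:IsotopeToMinimalPosition} gives $d_{\qcg(S)}(\alpha,\beta) \leq d_{\mathcal{C}^s(S-P)}([\alpha]_{S-P},[\beta]_{S-P})$ and Lemma~\ref{Lem:GeodesicDisjointFromP} gives the reverse inequality, and you have correctly filled in the details the paper leaves implicit.
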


Using that the distance between two quasicircles in the surviving curve graph is equivalent to their distance in the quasicircle graph, we are now in a position to prove that the quasicircle graph is hyperbolic.

\begin{proof}[Proof of Theorem~\ref{thm-hyperbolic}]
    Let $\alpha, \beta, \gamma, \mu \in \qcg(S)$. We will show that
    \[
        \langle \alpha, \gamma\rangle_{\mu} \geq \min\{ \langle \alpha, \beta\rangle_{\mu}, \langle \beta, \gamma\rangle_{\mu} \} - \delta - 4,
    \]
    where $\delta$ is the hyperbolicity constant for $\mathcal{C}^{s}(S -P)$. To do this we construct vertices $\alpha', \beta', \gamma', \delta' \in \mathcal{C}^{s}(S -P)$ which are all (i) transverse, and such that (ii) $d_{\qcg(S)}(\alpha,\alpha') \leq 1$, $d_{\qcg(S)}(\beta,\beta') \leq 1$, $d_{\qcg(S)}(\gamma,\gamma')\leq 1$.
    Set $\mu = \mu'$.
    To ensure (i) holds, we first find $\alpha''$ disjoint from and isotopic to $\alpha$, and we then apply a sufficiently small perturbation to $\alpha''$ so that (ii) holds.
    We note that the quality constant of $\alpha'$ may be larger than the quality constant $\alpha$.
    We repeat the process for $\beta$ and $\gamma$.
    We then choose a finite subset of points $P \subset S$ so that any bigon between $\alpha', \beta', \gamma',$ and $\mu'$ contains a point of $P$, which ensures each of $\alpha', \beta', \gamma',$ and $\mu'$ are in minimal position.
    Applying Lemma \ref{Lem:DistanceinQCSurviving} and Theorem \ref{Thm:SurvivingHyperbolic} proves our claim.
\end{proof}

\bibliographystyle{plain}
\bibliography{refs}

@article {bishop2025weilPetersson,
    AUTHOR = {Bishop, Christopher J.},
     TITLE = {Weil-{P}etersson curves, {$\beta$}-numbers, and minimal
              surfaces},
   JOURNAL = {Ann. of Math. (2)},
  FJOURNAL = {Annals of Mathematics. Second Series},
    VOLUME = {202},
      YEAR = {2025},
    NUMBER = {1},
     PAGES = {111--188},
      ISSN = {0003-486X,1939-8980}
}

@article {bonsanteDancigerMaloniSchlenker2021quasicircles,
    AUTHOR = {Bonsante, Francesco and Danciger, Jeffrey and Maloni, Sara and
              Schlenker, Jean-Marc},
     TITLE = {Quasicircles and width of {J}ordan curves in {$\Bbb C\Bbb
              P^1$}},
   JOURNAL = {Bull. Lond. Math. Soc.},
  FJOURNAL = {Bulletin of the London Mathematical Society},
    VOLUME = {53},
      YEAR = {2021},
    NUMBER = {2},
     PAGES = {507--523},
      ISSN = {0024-6093,1469-2120}
}

@article {smirnov2010dimension,
    AUTHOR = {Smirnov, Stanislav},
     TITLE = {Dimension of quasicircles},
   JOURNAL = {Acta Math.},
  FJOURNAL = {Acta Mathematica},
    VOLUME = {205},
      YEAR = {2010},
    NUMBER = {1},
     PAGES = {189--197},
      ISSN = {0001-5962,1871-2509}
}

@incollection {gehring1999characterizations,
    AUTHOR = {Gehring, Frederick W.},
     TITLE = {Characterizations of quasidisks},
 BOOKTITLE = {Quasiconformal geometry and dynamics ({L}ublin, 1996)},
    SERIES = {Banach Center Publ.},
    VOLUME = {48},
     PAGES = {11--41},
 PUBLISHER = {Polish Acad. Sci. Inst. Math., Warsaw},
      YEAR = {1999}
}

@article{tukia1980planarScoenflies,
    AUTHOR = {Tukia, Pekka},
    TITLE = {The planar {S}ch{\"o}nflies theorem for {L}ipschitz maps},
    JOURNAL = {Ann. Acad. Sci. Fenn. Ser. A I Math.},
    YEAR = {1980},
    VOLUME = {5},
    NUMBER = {1},
    PAGES={49-72}
}

@article {leRouxWolff2024automorphisms,
    AUTHOR = {Le Roux, Fr\'ed\'eric and Wolff, Maxime},
     TITLE = {Automorphisms of some variants of fine graphs},
   JOURNAL = {Algebr. Geom. Topol.},
  FJOURNAL = {Algebraic \& Geometric Topology},
    VOLUME = {24},
      YEAR = {2024},
    NUMBER = {8},
     PAGES = {4697--4730},
      ISSN = {1472-2747,1472-2739}
}

@article {bowdenHenselMannMiltonWebb2022rotation,
    AUTHOR = {Bowden, Jonathan and Hensel, Sebastian and Mann, Kathryn and
              Militon, Emmanuel and Webb, Richard},
     TITLE = {Rotation sets and actions on curves},
   JOURNAL = {Adv. Math.},
  FJOURNAL = {Advances in Mathematics},
    VOLUME = {408},
      YEAR = {2022},
     PAGES = {Paper No. 108579, 33},
      ISSN = {0001-8708,1090-2082}
}

@article {guiheneufMiliton2024parabolic,
    AUTHOR = {Guih\'eneuf, Pierre-Antoine and Militon, Emmanuel},
     TITLE = {Parabolic isometries of the fine curve graph of the torus},
   JOURNAL = {Proc. Lond. Math. Soc. (3)},
  FJOURNAL = {Proceedings of the London Mathematical Society. Third Series},
    VOLUME = {129},
      YEAR = {2024},
    NUMBER = {2},
     PAGES = {Paper No. e12625, 27}
}

@article {beurlingAhlfors1956boundary,
    AUTHOR = {Beurling, Arne and Ahlfors, Lars V.},
     TITLE = {The boundary correspondence under quasiconformal mappings},
   JOURNAL = {Acta Math.},
  FJOURNAL = {Acta Mathematica},
    VOLUME = {96},
      YEAR = {1956},
     PAGES = {125--142},
      ISSN = {0001-5962,1871-2509}
}

@article {tukia1981extension,
    AUTHOR = {Tukia, Pekka},
     TITLE = {Extension of quasisymmetric and {L}ipschitz embeddings of the
              real line into the plane},
   JOURNAL = {Ann. Acad. Sci. Fenn. Ser. A I Math.},
  FJOURNAL = {Annales Academiae Scientiarum Fennicae. Series A I.
              Mathematica},
    VOLUME = {6},
      YEAR = {1981},
    NUMBER = {1},
     PAGES = {89--94},
      ISSN = {0066-1953}
}

@misc{booth2024homeomorphisms,
      title={Homeomorphisms of surfaces that preserve continuously differentiable curves}, 
      author={Katherine Williams Booth},
      year={2024},
      note={\href{https://arxiv.org/abs/2410.21460}{arxiv:2410.21460}}  
}

@article{booth2024automorphisms,
      title={Automorphisms of smooth fine curve graphs}, 
      author={Katherine Williams Booth},
      year={To appear},
      journal = {Trans. Amer. Math. Soc.},
      url = "\url{https://doi.org/10.1090/tran/9560}",
}

@article {edwardsKirby1971deformations,
    AUTHOR = {Edwards, Robert D. and Kirby, Robion C.},
     TITLE = {Deformations of spaces of imbeddings},
   JOURNAL = {Ann. of Math. (2)},
  FJOURNAL = {Annals of Mathematics. Second Series},
    VOLUME = {93},
      YEAR = {1971},
     PAGES = {63--88},
      ISSN = {0003-486X}
}

@misc{farbMargalit2022Unpublished,
    author = {Farb, Benson and Margalit, Dan},
    note = {Unpublished work, available online at \href{https://www.youtube.com/watch?v=X0XBnKBKSDI}{https://www.youtube.com/watch?v=X0XBnKBKSDI}}
}

@misc{long2023automorphisms,
    title = {Automorphisms of the fine curve graph},
    author = {Long, Adele and Margalit, Dan and Pham, Anna and Verberne, Yvon and Yao, Claudia},
    year = {2021},
    note = {To appear in {\textit{{T}rans. {A}mer. {M}at. {S}oc.}} \href{https://arxiv.org/abs/2108.04872}{arxiv:2108.04872}}
}

@book{lehto2012univalent,
  title={Univalent functions and Teichm{\"u}ller spaces},
  author={Lehto, Olli},
  volume={109},
  year={1987},
  publisher={Springer Science \& Business Media}
}

@article{thurston1974foliations,
  title={Foliations and groups of diffeomorphisms},
  author={Thurston, William},
  journal={Bull. Amer. Math. Soc.},
  volume={80},
  number={2},
  pages={304--307},
  year={1974},
  publisher={American Mathematical Society}
}

@article{bers1960quasiconformal,
  title={Quasiconformal mappings and {Teichm\"uller's} theorem},
  author={Bers, Lipman},
  journal={Analytic functions},
  pages={89--119},
  year={1960},
  publisher={Princeton University Press}
}

@article{ahlfors1963quasiconformal,
    AUTHOR = {Ahlfors, Lars V.},
     TITLE = {Quasiconformal reflections},
   JOURNAL = {Acta Math.},
  FJOURNAL = {Acta Mathematica},
    VOLUME = {109},
      YEAR = {1963},
     PAGES = {291--301}
}

@article{aseev2014quasiconformal,
    AUTHOR = {Aseev, V. V.},
     TITLE = {A quasiconformal analogue of {C}arath\'{e}odory's criterion for
              the {M}\"{o}bius property of mappings},
   JOURNAL = {Sibirsk. Mat. Zh.},
  FJOURNAL = {Rossi\u{\i}skaya Akademiya Nauk. Sibirskoe Otdelenie. Institut
              Matematiki im. S. L. Soboleva. Sibirski\u{\i} Matematicheski\u{\i}
              Zhurnal},
    VOLUME = {55},
      YEAR = {2014},
    NUMBER = {1},
     PAGES = {3--10}
}

@article {bowden2022quasimorphisms,
    AUTHOR = {Bowden, Jonathan and Hensel, Sebastian Wolfgang and Webb,
              Richard},
     TITLE = {Quasi-morphisms on surface diffeomorphism groups},
   JOURNAL = {J. Amer. Math. Soc.},
  FJOURNAL = {Journal of the American Mathematical Society},
    VOLUME = {35},
      YEAR = {2022},
    NUMBER = {1},
     PAGES = {211--231}
}

@incollection {burago2008conjugation,
    AUTHOR = {Burago, Dmitri and Ivanov, Sergei and Polterovich, Leonid},
     TITLE = {Conjugation-invariant norms on groups of geometric origin},
 BOOKTITLE = {Groups of diffeomorphisms},
    SERIES = {Adv. Stud. Pure Math.},
    VOLUME = {52},
     PAGES = {221--250},
 PUBLISHER = {Math. Soc. Japan, Tokyo},
      YEAR = {2008}
}

@incollection {tsuboi2008uniform,
    AUTHOR = {Tsuboi, Takashi},
     TITLE = {On the uniform perfectness of diffeomorphism groups},
 BOOKTITLE = {Groups of diffeomorphisms},
    SERIES = {Adv. Stud. Pure Math.},
    VOLUME = {52},
     PAGES = {505--524},
 PUBLISHER = {Math. Soc. Japan, Tokyo},
      YEAR = {2008}
}

@article {tsuboi2012uniform,
    AUTHOR = {Tsuboi, Takashi},
     TITLE = {On the uniform perfectness of the groups of diffeomorphisms of
              even-dimensional manifolds},
   JOURNAL = {Comment. Math. Helv.},
  FJOURNAL = {Commentarii Mathematici Helvetici. A Journal of the Swiss
              Mathematical Society},
    VOLUME = {87},
      YEAR = {2012},
    NUMBER = {1},
     PAGES = {141--185}
}

@article {mooreKline1919denjoyRiesz,
    AUTHOR = {Moore, Robert L. and Kline, John R.},
     TITLE = {On the most general plane closed point-set through which it is
              possible to pass a simple continuous arc},
   JOURNAL = {Ann. of Math. (2)},
  FJOURNAL = {Annals of Mathematics. Second Series},
    VOLUME = {20},
      YEAR = {1919},
    NUMBER = {3},
     PAGES = {218--223},
      ISSN = {0003-486X,1939-8980}
}

@article {masur1999geometry,
    AUTHOR = {Masur, Howard A. and Minsky, Yair N.},
     TITLE = {Geometry of the complex of curves. {I}. {H}yperbolicity},
   JOURNAL = {Invent. Math.},
  FJOURNAL = {Inventiones Mathematicae},
    VOLUME = {138},
      YEAR = {1999},
    NUMBER = {1},
     PAGES = {103--149}
}

@book{lehto1973quasiconformal,
    AUTHOR = {Lehto, O. and Virtanen, K. I.},
     TITLE = {Quasiconformal mappings in the plane},
    SERIES = {Die Grundlehren der mathematischen Wissenschaften, Band 126},
   EDITION = {Second},
      NOTE = {Translated from the German by K. W. Lucas},
 PUBLISHER = {Springer-Verlag, New York-Heidelberg},
      YEAR = {1973}
}

@article {masurMinsky2000geometryII,
    AUTHOR = {Masur, Howard A. and Minsky, Yair N.},
     TITLE = {Geometry of the complex of curves. {II}. {H}ierarchical
              structure},
   JOURNAL = {Geom. Funct. Anal.},
  FJOURNAL = {Geometric and Functional Analysis},
    VOLUME = {10},
      YEAR = {2000},
    NUMBER = {4},
     PAGES = {902--974},
      ISSN = {1016-443X,1420-8970}
}

@article {harer1985stability,
    AUTHOR = {Harer, John L.},
     TITLE = {Stability of the homology of the mapping class groups of
              orientable surfaces},
   JOURNAL = {Ann. of Math. (2)},
  FJOURNAL = {Annals of Mathematics. Second Series},
    VOLUME = {121},
      YEAR = {1985},
    NUMBER = {2},
     PAGES = {215--249}
}

@article {bestvinaBromberg2019asymptotic,
    AUTHOR = {Bestvina, Mladen and Bromberg, Ken},
     TITLE = {On the asymptotic dimension of the curve complex},
   JOURNAL = {Geom. Topol.},
  FJOURNAL = {Geometry \& Topology},
    VOLUME = {23},
      YEAR = {2019},
    NUMBER = {5},
     PAGES = {2227--2276},
      ISSN = {1465-3060,1364-0380}
}

@article {behrstock2006asymptotic,
    AUTHOR = {Behrstock, Jason A.},
     TITLE = {Asymptotic geometry of the mapping class group and
              {T}eichm\"uller space},
   JOURNAL = {Geom. Topol.},
  FJOURNAL = {Geometry and Topology},
    VOLUME = {10},
      YEAR = {2006},
     PAGES = {1523--1578},
      ISSN = {1465-3060,1364-0380}
}

@article {bestvinaFujiwara2002bounded,
    AUTHOR = {Bestvina, Mladen and Fujiwara, Koji},
     TITLE = {Bounded cohomology of subgroups of mapping class groups},
   JOURNAL = {Geom. Topol.},
  FJOURNAL = {Geometry and Topology},
    VOLUME = {6},
      YEAR = {2002},
     PAGES = {69--89},
      ISSN = {1465-3060,1364-0380}
}

@article {tukiaVaisala1980quasisymmetric,
    AUTHOR = {Tukia, Pekka and V\"{a}is\"{a}l\"{a}, Jussi},
     TITLE = {Quasisymmetric embeddings of metric spaces},
   JOURNAL = {Ann. Acad. Sci. Fenn. Ser. A I Math.},
  FJOURNAL = {Annales Academiae Scientiarum Fennicae. Series A I.
              Mathematica},
    VOLUME = {5},
      YEAR = {1980},
    NUMBER = {1},
     PAGES = {97--114},
      ISSN = {0066-1953}
}

@article {richards1963classification,
    AUTHOR = {Richards, Ian},
     TITLE = {On the classification of noncompact surfaces},
   JOURNAL = {Trans. Amer. Math. Soc.},
  FJOURNAL = {Transactions of the American Mathematical Society},
    VOLUME = {106},
      YEAR = {1963},
     PAGES = {259--269},
      ISSN = {0002-9947,1088-6850},
   MRCLASS = {54.75},
  MRNUMBER = {143186},
MRREVIEWER = {S.\ S.\ Cairns},
       DOI = {10.2307/1993768},
       URL = "\url{https://doi.org/10.2307/1993768}",
}

@article {rasmussen,
    AUTHOR = {Alexander J. Rasmussen},
     TITLE = {UNIFORM HYPERBOLICITY OF THE GRAPHS
OF NONSEPARATING CURVES VIA BICORN CURVES},
   JOURNAL = {Proc. Amer. Math. Soc.},
  FJOURNAL = {Proceedings of the American Mathematical Society},
    VOLUME = {148},
      YEAR = {2020},
    NUMBER = {6},
     PAGES = {2345-2357},
}

@article{hatcher1991arcgraph,
    AUTHOR = {Hatcher, Allen},
     TITLE = {On triangulations of surfaces},
   JOURNAL = {Topology Appl.},
  FJOURNAL = {Topology and its Applications},
    VOLUME = {40},
      YEAR = {1991},
    NUMBER = {2},
     PAGES = {189--194},
      ISSN = {0166-8641,1879-3207}
}

@article {boothMinahanShapiro2023automorphisms,
    AUTHOR = {Booth, Katherine Williams and Minahan, Daniel and Shapiro,
              Roberta},
     TITLE = {Automorphisms of the fine 1-curve graph},
   JOURNAL = {Geom. Dedicata},
  FJOURNAL = {Geometriae Dedicata},
    VOLUME = {219},
      YEAR = {2025},
    NUMBER = {5},
     PAGES = {Paper No. 68, 20},
      ISSN = {0046-5755,1572-9168}
}

\end{document}